\def\bfB{\mathbf{B}}
\newcommand{\Mat}{\operatorname{M}}
\newcommand{\id}{\operatorname{id}}
\newcommand{\Ker}{\operatorname{Ker}}
\newcommand{\Vect}{\operatorname{span}}
\newcommand{\im}{\operatorname{Im}}
\newcommand{\rk}{\operatorname{rank}}
\renewcommand{\setminus}{\smallsetminus}
\def\K{\mathbb{K}}
\def\Z{\mathbb{Z}}
\renewcommand{\L}{\mathbb{L}}
\def\calH{\mathcal{H}}
\def\calL{\mathcal{L}}
\def\calQ{\mathcal{Q}}
\def\calS{\mathcal{S}}
\def\calT{\mathcal{T}}
\def\calZ{\mathcal{Z}}
\def\lcro{\mathopen{[\![}}
\def\rcro{\mathclose{]\!]}}
\theoremstyle{definition}
\newtheorem{Def}{Definition}
\newtheorem{Not}[Def]{Notation}
\theoremstyle{plain}
\newtheorem{theo}{Theorem}[section]
\newtheorem{prop}[theo]{Proposition}
\newtheorem{cor}[theo]{Corollary}
\newtheorem{lemma}[theo]{Lemma}
\newtheorem{claim}{Claim}
\theoremstyle{plain}
\theoremstyle{remark}
\newtheorem{Rems}{Remarks}
\newtheorem{Rem}[Rems]{Remark}
\title{LDB division algebras}
\author{Cl\'ement de Seguins Pazzis\footnote{Universit\'e de Versailles Saint-Quentin-en-Yvelines, Laboratoire de Math\'ematiques
de Versailles, 45 avenue des Etats-Unis, 78035 Versailles cedex, France}
\footnote{e-mail address: dsp.prof@gmail.com}}
\begin{document}

\thispagestyle{plain}

\maketitle

\begin{abstract}

An LDB division algebra is a triple $(A,\star,\bullet)$ in which
$\star$ and $\bullet$ are regular bilinear laws on the finite-dimensional non-zero vector space $A$
such that $x \star (x \bullet y)$ is a scalar multiple of $y$ for all vectors $x$ and $y$ of $A$.
This algebraic structure has been recently discovered in the study of the critical case in
Meshulam and \v Semrl's estimate of the minimal rank in non-reflexive operator spaces.

In this article, we obtain a constructive description of all LDB division algebras over an arbitrary field
together with a reduction of the isotopy problem to the similarity problem for specific types of quadratic forms
over the given field. In particular, it is shown that the dimension of an LDB division algebra is always a power of $2$, and that
it belongs to $\{1,2,4,8\}$ if the characteristic of the underlying field is not $2$.
\end{abstract}

\vskip 2mm
\noindent
\emph{AMS Classification:} 17A35; 11E88; 15A66

\vskip 2mm
\noindent
\emph{Keywords:} division algebras, quadratic forms, Clifford algebras, Hurwitz algebras, fields of characteristic $2$

\section{Introduction}

Throughout the article, $\K$ denotes an arbitrary field.

\subsection{The main concept}

A division algebra (over $\K$) is a pair $(A,\star)$ in which $A$ is a non-zero finite-dimensional vector space (over $\K$)
and $\star : A \times A \rightarrow A$ is a bilinear map that is \emph{regular} in the sense that
$x \star y \neq 0$ for all non-zero vectors $x$ and $y$ of $A$.
This is the weakest possible definition of a finite-dimensional division algebra over a field
(we require neither associativity nor the existence of a one-sided
unity). Following the terminology introduced by Albert \cite{Albert}, we say that
two division algebras $(A,\star)$ and $(B,\bullet)$ are \textbf{isotopic}
whenever there are vector space isomorphisms $f$, $g$ and $h$ from $A$ to $B$ such that
$$\forall (x,y)\in A^2, \; x \star y=h^{-1}(f(x)\bullet g(y)).$$

If the field $\K$ is algebraically closed, then a division algebra over $\K$ must be of dimension $1$. Over the reals, a division algebra
must have dimension $1$, $2$, $4$ or $8$
(see \cite{BottMilnor,Kervaire}). This result extends to real closed fields \cite{DDH,Dieterich}. Over fields that are
neither algebraically closed nor real closed, division algebras exist in arbitrarily large dimensions.
Apart from that, little is known in general over arbitrary fields without additional assumptions on
the multiplicative law.

In this article, we shall focus on a special type of division algebra that has been discovered very recently
\cite{dSPLLD1}. Given a division algebra $(A,\star)$,
we define a \textbf{quasi-left-inversion} for $\star$ as a map $\bullet  : A \times A \rightarrow A$ such that, for all $(x,y)\in (A \setminus \{0\})^2$,
 the vector $x \bullet y$ is non-zero and $x \star (x \bullet y)$ is a scalar multiple of $y$
 (in other words, up to a scalar, $x \bullet y$ is the solution of the equation $x \star z=y$ (with unknown $z$), that is ``$y$ left-divided by $x$").
If we have a bilinear quasi-left-inversion $\bullet$ for $\star$, then one easily obtains (see Proposition 5.1 of \cite{dSPLLD1})
a uniquely defined quadratic form $q$ on $A$ such that
$$\forall (x,y)\in A^2, \quad x \star (x \bullet y)=q(x)\,y.$$
The quadratic form $q$ is anisotropic (that is $q(x) \neq 0$ for all non-zero $x \in A$),
but its polar form $b_q : (x,y) \mapsto q(x+y)-q(x)-q(y)$ might be degenerate if the field $\K$ has characteristic $2$
(a mundane example is given by the standard multiplication on $A=\K$).

In the above situation, one proves that $\bullet$ is, up to multiplication by a non-zero scalar,
the unique bilinear quasi-left-inversion for $\star$ (see Proposition 5.2 of \cite{dSPLLD1}).
The triple $(A,\star,\bullet)$ is then called a \textbf{left-division-bilinearizable division algebra}
(in abbreviated form: LDB division algebra) and $q$ is called the quadratic form attached to it.
Note that, for all $\lambda \in \K^*$, the triple $(A,\star,\lambda\bullet)$ is
another LDB division algebra with $\lambda q$ as its attached quadratic form.
Whenever possible, we shall understate the two laws $\star$ and $\bullet$ and simply say that $A$ is an LDB division algebra.

When one is confronted with quadratic forms in the context of division algebras the comparison with
composition algebras is unavoidable. Let us recall that a (finite-dimensional) \textbf{composition algebra} is a triple $(A,\star,N)$
in which $A$ is a finite-dimensional non-zero vector space over $\K$, $\star : A^2 \rightarrow A$ is a bilinear map, and
$N$ is a non-degenerate quadratic form on $A$ such that
$$\forall (x,y) \in A^2, \; N(x \star y)=N(x)N(y).$$
A composition algebra is called a \textbf{Hurwitz algebra} when it has a (two-sided) unity.

Two division algebras $(A,\star)$ and $(B,\star')$ are called isomorphic
when there is a vector space isomorphism $f : A \overset{\simeq}{\rightarrow} B$ such that $\forall (x,y)\in A^2, \; f(x \star y)=f(x) \star' f(y)$.
A famous result \cite{Hurwitz,Involutions}\footnote{A good account of the history of Hurwitz's result is given in
chapter 10 of \cite{Zahlen}.}
states that every Hurwitz algebra has dimension $1$, $2$, $4$ or $8$
and is isomorphic to one of the following canonical Hurwitz algebras:
\begin{itemize}
\item the one-dimensional Hurwitz algebra $(\K,\cdot ,x \mapsto x^2)$, if $\K$ does not have characteristic $2$;
\item the two-dimensional Hurwitz algebra $(\K \times \K, \cdot, (x,y) \mapsto xy)$;
\item the two-dimensional Hurwitz algebra $(\L,\cdot,N_{\L/\K})$, where $\L$ is a separable quadratic extension of $\K$,
and $N_{\L/\K}$ is the norm of $\L$ over $\K$;
\item the four-dimensional Hurwitz algebra $(C(q),\cdot,N_{C(q)})$, where $q$ is a regular $2$-dimensional quadratic form over $\K$, and
$C(q)$ denotes its Clifford algebra with norm denoted by $N_{C(q)}$;
\item the eight-dimensional Hurwitz algebra $(C(q)^2,\times_\varepsilon,N_\varepsilon)$, where $q$ is a regular $2$-dimensional quadratic form over $\K$,
$\varepsilon$ is a non-zero scalar, and $\times_\varepsilon$ and $N_\varepsilon$ are defined, respectively, by
$$(a,b) \times_\varepsilon (c,d):=(ac-d\overline{b}\, ,\, \overline{a} d-\varepsilon cb)$$
and
$$N_\varepsilon(a,b):=a\overline{a}-\varepsilon b \overline{b},$$
where $x \mapsto \overline{x}$ denotes the conjugation in the quaternion algebra $C(q)$.
\end{itemize}
Let $(A,\star,N)$ be a Hurwitz algebra. One sees that $(A,\star)$ is a division algebra
if and only if $N$ is anisotropic. Moreover, if $N$ is anisotropic then we can find a bilinear quasi-left-inversion for $\star$.
Indeed, denoting by $x \mapsto \overline{x}$
the opposite of the reflection of the quadratic space $(A,N)$ along the unity of $(A,\star)$,
one can check - e.g., by referring to the above canonical situations - that the following identity holds:
$$\forall (x,y)\in A^2, \; x \star (\overline{x} \star y)=N(x)\,y.$$
Thus, provided that $N$ is anisotropic, the law $\bullet : (x,y)\mapsto \overline{x} \star y$ is a bilinear quasi-left-inversion for $\star$
and the triple $(A,\star,\bullet)$ is an LDB division algebra with attached quadratic form $N$.
In that situation, we shall say that it is an LDB division algebra of \textbf{Hurwitz type}.
More precisely, we shall say that it is of \textbf{separable quadratic type}, \textbf{quaternionic type}, or \textbf{octonionic type}, depending on whether
it has dimension $2$, $4$ or $8$.

Assuming now that $\K$ has characteristic $2$, we can give an additional kind of example.
A finite-dimensional field extension $\L$ of $\K$
is called \textbf{hyper-radicial} if $\forall x \in \L, \; x^2 \in \K$.
Given such an extension, $q : x \mapsto x^2$ is an anisotropic quadratic form on $\L$ seen as a vector space over $\K$. Thus, with
$\star$ as the multiplication of the field $\L$, the triple $(\L,\star,\star)$ is an LDB division algebra with $q$
as its attached quadratic form: we call it the LDB division algebra associated with the hyper-radicial extension $\L$,
and we say that it is an \textbf{LDB division algebra of hyper-radicial type.}
Note that $q$ is totally degenerate, i.e.\ its polar form is zero.
Moreover, in this situation we see that the degree of $\L$ over $\K$ is a power of $2$, and if
$\dim_\K \L=2^n$ and $\L=\K[a_1,\dots,a_n]$, then $q \simeq \langle 1,a_1^2\rangle \otimes \cdots \otimes \langle 1,a_n^2\rangle$.

\vskip 3mm
Here is a unification of some of the above examples. First of all $(\K,\cdot,\cdot)$ is always an LDB division algebra
with attached quadratic form $x \mapsto x^2$: it is of Hurwitz type if $\K$ has characteristic not $2$,
otherwise it is of hyper-radicial type (take $\L=\K$).
Next, let $\L$ be a quadratic field extension of $\K$, and denote by $x \mapsto \overline{x}$
the non-identity automorphism of $\L$ over $\K$ if $\L$ is a separable extension of $\K$, and the identity of $\L$ otherwise.
Denoting by $\star$ the multiplication on $\L$ and defining $\bullet$ by $x \bullet y:= \overline{x} \star y$,
one sees that $(\L,\star,\bullet)$ is an LDB division algebra whose attached quadratic form is the norm of $\L$ over $\K$;
it is of Hurwitz type if $\L$ is a separable extension of $\K$, otherwise $\K$ has characteristic $2$ and
$(\L,\star,\bullet)$ is of hyper-radicial type; in any case we shall say that $(\L,\star,\bullet)$ is an LDB division algebra
of \textbf{quadratic type.}

\vskip 3mm
LDB division algebras were recently discovered in the study of non-reflexive spaces of linear operators.
Recall that, given vector spaces $U$ and $V$, a linear subspace $\calS$ of the space $\calL(U,V)$ of all linear maps from $U$ to $V$
is called \textbf{(algebraically) reflexive} when every $f \in \calL(U,V)$ that satisfies $\forall x \in U, \; f(x) \in \calS x$ belongs to $\calS$.
A result of Meshulam and \v Semrl \cite{MeshulamSemrlLAA} states that, provided that the underlying field has more than $n$ elements,
a non-reflexive $n$-dimensional operator space must contain a non-zero operator $f$ with $\rk(f) \leq 2n-2$
(it was recently shown that the provision on the cardinality of the underlying field is unnecessary \cite{dSPminrank}).
In \cite{dSPLLD1}, investigating the optimality of this result has led to the discovery of LDB division algebras
and their connection to examples in which the upper-bound $2n-2$ is reached:
take an $n$-dimensional LDB division algebra $(A,\star,\bullet)$ with attached quadratic form $q$, and consider the bilinear map
$$\Gamma_A : \begin{cases}
(A \oplus \K^2) \times A^2 & \longrightarrow A^2 \\
\bigl(x+(\lambda,\mu),(y,z)\bigr) & \longmapsto (x \star z+\lambda y\, ,\, x \bullet y+\mu z).
\end{cases}$$
The set $\calT_A$ consisting of all the endomorphisms $\Gamma_A(x+(\lambda,\mu),-)$ of $A^2$, with $(x,\lambda,\mu)\in A \times \K^2$,
is an $(n+2)$-dimensional linear subspace of $\calL(A^2)$ called the \textbf{twisted operator space}
attached to $(A,\star,\bullet)$. This operator space has very interesting properties:
given $(x,\lambda,\mu)\in A \times \K^2$, the endomorphism $\Gamma_A(x+(\lambda,\mu),-)$ is non-singular if and only if
$q(x)-\lambda \mu \neq 0$ (see \cite[Proposition 5.5]{dSPLLD1}). In other words, by identifying $\calT_A$ with $A \oplus \K^2$ through the isomorphism
$X \mapsto \Gamma_A(X,-)$, the set of all singular operators in $\calT_A$
is seen to correspond to the isotropy cone of the quadratic form $\widetilde{q} : x+(\lambda,\mu) \mapsto q(x)-\lambda \mu$.
Moreover, $\calT_A$ is \textbf{locally linearly dependent}, that is, for all $(y,z) \in A^2$,
there is a non-zero operator $f \in \calT_A$ such that $f(y,z)=0$ (see \cite[Proposition 5.4]{dSPLLD1}). Using this, one proves that
every anisotropic hyperplane of $\calT_A$ is non-reflexive \cite[Proposition 5.6]{dSPLLD1}.
Thus, if we have an anisotropic hyperplane of $(A\oplus \K^2, \widetilde{q})$,
then we obtain an $(n+1)$-dimensional non-reflexive operator space in which every non-zero operator has rank $2n=2(n+1)-2$,
thus yielding an example which demonstrates that the upper-bound of Meshulam and \v Semrl is optimal. Theorem
6.1 of \cite{dSPLLD1} shows that all the non-reflexive operator spaces for which Meshulam and \v Semrl's upper bound
is reached essentially arise from this construction, provided that the underlying field $\K$ be of large cardinality.

\subsection{The main results}

The purpose of this article is to provide a constructive description of all LDB division algebras. First, we need to define relevant notions
of isomorphisms for these structures. Let $(A,\star,\bullet)$ be an LDB division algebra, together with
three isomorphisms $f : B \overset{\simeq}{\rightarrow} A$, $g : B \overset{\simeq}{\rightarrow} A$ and $h : A \overset{\simeq}{\rightarrow} B$.
Then, the composition laws $\star'$ and $\bullet'$ on $B$ defined by
$$x \star' y=h\bigl(f(x)\star g(y)\bigr) \quad \text{and} \quad x \bullet' y=g^{-1}\bigl(f(x)\bullet h^{-1}(y)\bigr)$$
yield an LDB division algebra $(B,\star',\bullet')$ with attached quadratic form $x \mapsto q(f(x))$.
This motivates the following definition:

\begin{Def}
Let $(A,\star,\bullet)$ and $(B,\star',\bullet')$ be LDB division algebras. \\
We say that $(A,\star,\bullet)$ and $(B,\star',\bullet')$ are \textbf{weakly equivalent} when the
division algebras $(A,\star)$ and $(B,\star')$ are isotopic. \\
We say that $(A,\star,\bullet)$ and $(B,\star',\bullet')$ are \textbf{equivalent} when there are isomorphisms
$f : B \overset{\simeq}{\rightarrow} A$, $g : B \overset{\simeq}{\rightarrow} A$ and $h : A \overset{\simeq}{\rightarrow} B$
such that, for all $(x,y)\in B^2$,
$$x \star' y=h\bigl(f(x)\star g(y)\bigr) \quad \text{and} \quad x \bullet' y=g^{-1}\bigl(f(x)\bullet h^{-1}(y)\bigr).$$
\end{Def}

Note that the LDB division algebras $(A,\star,\bullet)$ and $(B,\star',\bullet')$ are weakly equivalent if and only if
there exists a non-zero scalar $\lambda \in \K^*$ for which $(A,\star,\bullet)$ and $(B,\star',\lambda\bullet')$ are equivalent.
Remember that two quadratic forms $q$ and $q'$ on respective vector spaces $V$ and $V'$ are \textbf{equivalent} (in which case we write $q \simeq q'$)
when there exists a vector space isomorphism $u : V \overset{\simeq}{\rightarrow} V'$ such that $q'(u(x))=q(x)$ for all $x \in V$;
they are called \textbf{similar} when there is a non-zero scalar $\lambda$ such that $q \simeq \lambda q'$.
It is then easily seen that equivalent (respectively, weakly equivalent)
LDB division algebras have equivalent (respectively, similar) attached quadratic forms.

From there, our aim is to relate LDB division algebras to known structures,
both for the relation of weak equivalence and for the one of equivalence.

We split our results into three theorems. We shall say that an LDB division algebra is \textbf{non-degenerate} (respectively, \textbf{degenerate})
when the attached quadratic form is non-degenerate (respectively, degenerate).
Note that, over a field of characteristic not $2$, every LDB division algebra is non-degenerate since the attached quadratic form is anisotropic.

\begin{theo}[Structure theorem for non-degenerate LDB division algebras]\label{nondegeneratetheo}
Every non-degenerate LDB division algebra has dimension $1$, $2$, $4$ or $8$. \\
Every non-degenerate LDB division algebra is weakly equivalent to an LDB division algebra of Hurwitz type. \\
Every non-degenerate LDB division algebra whose attached quadratic form represents $1$ is
equivalent to an LDB division algebra of Hurwitz type.
\end{theo}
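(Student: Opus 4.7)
Plan. I will prove the third assertion; the second will then follow by a rescaling argument, and the dimension bound by Hurwitz's theorem applied at the end. For the reduction of the second to the third: if $q$ does not represent $1$, pick any non-zero $u \in A$ and set $\lambda := q(u) \in \K^*$. The triple $(A,\star,\lambda^{-1}\bullet)$ is an LDB division algebra with attached quadratic form $\lambda^{-1} q$, which represents $1$ (at $u$), and is weakly equivalent to $(A,\star,\bullet)$ since the underlying $\star$-algebra is unchanged; so it suffices to prove that the rescaled triple is equivalent to an LDB division algebra of Hurwitz type.

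Now assume $q(e)=1$ for some $e \in A$. The identity $e \star (e \bullet y) = y$ shows that the maps $y \mapsto e \star y$ and $y \mapsto e \bullet y$ are mutually inverse isomorphisms of $A$. Setting $w := e \bullet e$ gives $e \star w = e$, and by the regularity of $\star$ the right multiplication $x \mapsto x \star w$ is also an isomorphism; let $f$ denote its inverse. With $g : y \mapsto e \bullet y$ and $h := \id_A$, the construction described just before the Definition in the excerpt yields an equivalent LDB structure $(A,\star',\bullet')$ with
$$x \star' y := f(x) \star g(y) \quad \text{and} \quad x \bullet' y := g^{-1}\bigl(f(x) \bullet y\bigr),$$
whose attached quadratic form is $q' := q \circ f$. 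Since $f(e) = e$, a direct verification shows that $e \star' y = y$ and $x \star' e = x$ for all $x, y$, so $e$ is a two-sided unit for $\star'$, and $q'(e) = 1$.

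In this new structure, I will derive the classical Hurwitz-algebra identities. Linearizing $x \star' (x \bullet' y) = q'(x) y$ in $x$ yields
$$x \star' (z \bullet' y) + z \star' (x \bullet' y) = b_{q'}(x, z)\, y;$$
substituting $z = e$ and using $e \star' v = v$ together with $e \bullet' y = y$ (forced by $e \star' (e \bullet' y) = y$) gives $x \bullet' y = b_{q'}(e, x)\, y - x \star' y = \bar x \star' y$, where $\bar x := b_{q'}(e, x) e - x$ is the involution attached to $e$. Thus the LDB identity reads $x \star' (\bar x \star' y) = q'(x) y$, and the dual identity noted in the introduction translates to $\bar x \star' (x \star' y) = q'(x) y$. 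Setting $y = e$ in both yields $x \star' \bar x = \bar x \star' x = q'(x) e$, and these rearrange to the quadratic identity $x \star' x - b_{q'}(e, x)\, x + q'(x) e = 0$ and to the left alternative law $x \star' (x \star' y) = (x \star' x) \star' y$.

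The main obstacle will be to conclude that $(A,\star',q')$ is a composition algebra, i.e., that $q'(x \star' y) = q'(x) q'(y)$. My approach is to use the non-degeneracy of the polar form $b_{q'}$ (a consequence of the non-degeneracy of $q'$) together with the identities derived above to force right alternativity, and hence via Artin's theorem full alternativity; a unital alternative algebra with a non-degenerate anisotropic quadratic norm of trace $b_{q'}(e,\cdot)$ and norm $q'$ is a Hurwitz algebra. Hurwitz's theorem then yields $\dim A \in \{1, 2, 4, 8\}$ together with an isomorphism of $(A,\star',q')$ with one of the canonical Hurwitz algebras listed in the introduction, and the identity $x \bullet' y = \bar x \star' y$ identifies $(A,\star',\bullet')$ as the attached LDB division algebra of Hurwitz type. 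Particular care will be needed in characteristic $2$, where $b_{q'}(e,e) = 0$ but $b_{q'}$ itself remains non-degenerate by hypothesis.
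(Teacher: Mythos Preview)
Your opening moves are correct and rather elegant: the isotopy with $f=(R^{\star}_w)^{-1}$, $g=L^{\bullet}_e$, $h=\id$ does give an equivalent LDB structure in which $e$ is a \emph{two-sided} unit for $\star'$, and from $e\bullet' y=y$ and the linearized LDB identity you correctly obtain $x\bullet' y=\bar x\star' y$, the quadratic relation $x\star' x=t(x)x-q'(x)e$, and left alternativity $L_x^2=L_{x^2}$. This is in fact a more direct standardization than the paper's, which goes through the twisted operator space $\Gamma_A$ and produces only a left unit (though with the compensating feature that $q$ itself, not merely an equivalent form, is preserved).

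The gap is the step you label ``the main obstacle'': you propose to use non-degeneracy of $b_{q'}$ to force right alternativity, but nothing in the identities at hand lets you compute $R_y^2$ or, equivalently, the $b_{q'}$-adjoint of $L_x$. What you have is $L_{\bar x}=q'(x)L_x^{-1}$; what you need for compositionality $q'(x\star' y)=q'(x)q'(y)$ is $L_{\bar x}=L_x^{*}$ (adjoint for $b_{q'}$), and these coincide precisely when $L_x$ is a similitude of $q'$ --- which is the conclusion, not a hypothesis. Non-degeneracy only guarantees that an adjoint exists, not what it is. Even granting full alternativity, your final implication (``unital alternative with non-degenerate quadratic norm $\Rightarrow$ Hurwitz'') still hides the same issue: one must show the involution is an anti-automorphism, equivalently $t(x\star' y)=t(x)t(y)-b_{q'}(x,y)$, and this again comes down to the unknown adjoint relation. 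So neither the dimension bound nor the Hurwitz structure follows from your outline.

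The paper's route after standardization is entirely different and bypasses alternativity. From $(x\star -)^2=-q(x)\id$ for $x\perp e$ one gets a representation of the Clifford algebra $C(-q_V)$ on $A$; simplicity and dimension counts for Clifford algebras force $\dim A\in\{1,2,4,8\}$ (Section~\ref{dimensionsection}), and then a case-by-case analysis (Sections~\ref{dim1et2section}--\ref{char2section}) uses the fine structure of the relevant Clifford algebras together with Skolem--Noether to show that any two $e$-standard structures with the same $q$ are conjugate, whence equivalent to the canonical Hurwitz model. Compositionality of $q$ is never proved directly; it falls out a posteriori from the identification with a Hurwitz algebra.
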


Note in particular that, for fields of characteristic not $2$, the quadratic form attached to an LDB division algebra
is always similar to a \textbf{Pfister form}, that is a form of type $\langle 1,a_1\rangle \otimes \cdots \otimes \langle 1,a_n\rangle$
for some $(a_1,\dots,a_n)\in (\K^*)^n$.

\begin{Rem}
Theorem \ref{nondegeneratetheo} essentially states that the non-degenerate LDB division algebras are the isotopes
of the Hurwitz algebras that are division algebras.
This compares interestingly with the relationship between composition algebras and Hurwitz algebras.
Indeed, it is a rather elementary observation that the composition algebras are the \emph{orthogonal} isotopes of Hurwitz algebras in the following sense: two triples
$(A,\star,q)$ and $(B,\star',q')$ - in which $(A,\star)$ and $(B,\star')$ are non-associative algebras
and $q$ and $q'$ are quadratic forms, respectively, on $A$ and $B$ - are called orthogonally isotopic when there
are isometries $f$, $g$ and $h$ from $(A,q)$ to $(B,q')$ such that
$$\forall (x,y)\in A^2, \; x \star y=h^{-1}\bigl(f(x) \star' g(y)\bigr).$$
\end{Rem}

\vskip 3mm
Here is our result on degenerate LDB division algebras over fields of characteristic $2$:

\begin{theo}[Structure theorem for degenerate LDB division algebras]\label{degeneratetheo}
Every degenerate LDB division algebra is weakly equivalent to an LDB division algebra of hyper-radicial type. \\
Every degenerate LDB division algebra whose attached quadratic form represents $1$ is equivalent to
an LDB division algebra of hyper-radicial type.
\end{theo}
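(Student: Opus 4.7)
Since $q$ is anisotropic yet degenerate, $\K$ has characteristic $2$. My plan is to reduce both statements to the case where $(A,\star,\bullet)$ is literally the LDB structure of a hyper-radicial field extension. First, I would normalize: choose a nonzero vector $r$ in $\operatorname{rad}(b_q)$, which exists by degeneracy and satisfies $q(r)\neq 0$ by anisotropy. For the weak-equivalence statement, rescale $\bullet$ so that $q(r)=1$; for the equivalence statement, where $q$ represents $1$, one argues (with a little work) that $r$ may be chosen with $q(r)=1$ already. Then, via Albert's classical isotopy trick (composing with the left and right multiplications by $r$), replace $(A,\star,\bullet)$ by an equivalent LDB division algebra in which the distinguished vector becomes a two-sided unit $e$ for $\star$ and still lies in the radical of the attached polar form.

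Next I would collapse $\bullet$ onto $\star$. Specializing the LDB identity at $x=e$ yields $e \star (e \bullet z) = z$, hence $e \bullet z = z$. Polarizing the LDB identity gives
$$ x \star (y \bullet z) + y \star (x \bullet z) = b_q(x,y)\,z, $$
and setting $y=e$, combined with $b_q(x,e)=0$, forces $x \bullet z = x \star z$. Thus $\bullet = \star$, the LDB identity reduces to $L_x^2 = q(x)\,\id$ (where $L_u$ denotes left multiplication by $u$ in $\star$), and its polarization produces the Clifford-type relation
$$ L_x L_y + L_y L_x = b_q(x,y)\,\id. $$

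The crux is now to upgrade ``$b_q$ has a nontrivial radical'' to ``$b_q\equiv 0$'' on the whole of $A$. The Clifford-type relation says that $x \mapsto L_x$ extends to a representation of the Clifford algebra of $q$ on $A$. I would exploit this in combination with the regularity of $\star$: for any $r\in\operatorname{rad}(b_q)\setminus\{0\}$, the polarized relation says $L_r$ commutes with every $L_y$, giving a strong centralizing structure that I would use to propagate the degeneracy throughout the form. This step, upgrading partial degeneracy to total degeneracy, is the main technical obstacle and is where I expect the bulk of the work to lie.

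Once $b_q\equiv 0$ is in hand, evaluating the Clifford-type relation at $e$ gives $x \star y + y \star x = 0$, hence commutativity of $\star$, while the relation itself becomes $L_x L_y = L_y L_x$. Associativity then follows from
$$ (x\star y)\star z = z \star (x\star y) = L_z L_x(y) = L_x L_z(y) = x\star(z\star y) = x\star(y\star z). $$
So $(A,\star)$ is a finite-dimensional commutative, associative, unital division algebra over $\K$ in which every element $x$ satisfies $x^2 = q(x)\in\K$; in other words, $A$ is a hyper-radicial field extension $\L/\K$. Since $\bullet = \star$ equals its multiplication, the original LDB division algebra is of hyper-radicial type, yielding both assertions.
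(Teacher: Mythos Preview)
Your plan is sound and runs parallel to the paper's, with a few differences worth noting.

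For the crux step you leave open, here is what the paper does. Split $A = R \oplus V$ with $R = \operatorname{rad}(b_q)$ and $q_V$ nondegenerate. The subalgebra $\L \subset \calL(A)$ generated by $\{L_x : x \in R\}$ is commutative (your observation), has only invertible nonzero elements by the minimal-polynomial lemma, and satisfies $f^2 \in \K\,\id$ for all $f \in \L$; hence $\L$ is a hyper-radicial field and $A$ becomes an $\L$-space on which every $L_y$ is $\L$-linear. If $m := \dim_\K V > 0$, the map $x \in V \mapsto L_x$ extends to an injective $\L$-algebra homomorphism $C((q_V)_\L) \hookrightarrow \calL_\L(A)$, and comparing dimensions gives $2^m \leq (n/d)^2$ with $d := [\L:\K] \geq \dim_\K R = n-m$, forcing $d=m=2$; this edge case is then excluded by showing it would make $q$ isotropic. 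Your sketch points at the centralizing structure but gives no hint of this dimension-count-plus-edge-case argument, which is where the actual work lies.

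Two smaller points. First, your derivation of $\bullet=\star$ uses only the \emph{left}-unit property of $e$, so you could defer the two-sided normalization to the endgame; in any case, to keep $e$ in the radical after the isotopy you must arrange $f(e)=e$, which ``Albert's trick'' as usually stated does not guarantee (one valid choice is $f=R_e^{-1}L_e$, $g=\id$, $h=L_e^{-1}$). This direct isotopy is more elementary than the paper's route through the Standardization Lemma, which is a substantial independent result. Second, for the equivalence statement, ``$r \in R$ with $q(r)=1$'' is not available a priori when $q$ merely represents $1$ somewhere; the fix (also the paper's) is a two-pass argument: first run the weak-equivalence proof with a rescaling to learn that $q$ is totally degenerate, whence $R=A$ and any vector with $q$-value $1$ already lies in $R$. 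Your endgame---showing $(A,\star)$ is itself a commutative associative hyper-radicial field via the two-sided unit---is a pleasant alternative to the paper's, which instead realizes the field externally as $\{L_x : x \in A\}$ and then invokes Skolem--Noether (partly because it is simultaneously establishing Theorem~\ref{classbyquadformtheo}).
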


In particular, this shows that the quadratic form attached to an LDB division algebra is either totally degenerate
or non-degenerate. Here are two nice corollaries to the above two theorems:

\begin{cor}
The dimension of an LDB division algebra is always a power of $2$.
If the field $\K$ has characteristic not $2$, then an LDB division algebra over $\K$ must have dimension $1$, $2$, $4$ or $8$.
\end{cor}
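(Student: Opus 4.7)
The plan is to split on whether the LDB division algebra is non-degenerate or degenerate and invoke the two structure theorems that have just been stated, then use the elementary fact that weak equivalence preserves the dimension of the underlying vector space. To see the latter, observe that if $(A,\star,\bullet)$ and $(B,\star',\bullet')$ are weakly equivalent then by definition the division algebras $(A,\star)$ and $(B,\star')$ are isotopic, so there exist vector space isomorphisms $f,g,h$ between $A$ and $B$; in particular $\dim_\K A=\dim_\K B$.

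In the non-degenerate case, Theorem \ref{nondegeneratetheo} directly gives dimension in $\{1,2,4,8\}$, which is a power of $2$. I would then handle the second assertion of the corollary by recalling that, as noted in the text, over a field of characteristic not $2$ the attached quadratic form is anisotropic, and an anisotropic quadratic form over such a field is automatically non-degenerate: if $x\ne 0$ were in the radical of the polar form, then $b_q(x,x)=2q(x)=0$ would force $q(x)=0$, contradicting anisotropy. Hence over such a field every LDB division algebra falls in the non-degenerate case and Theorem \ref{nondegeneratetheo} applies.

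For the degenerate case (which can only arise in characteristic $2$), I would apply Theorem \ref{degeneratetheo} to obtain a weak equivalence with an LDB division algebra of hyper-radicial type $(\L,\star,\star)$, for some hyper-radicial field extension $\L/\K$. As already observed in the definition of hyper-radicial extensions in the introduction, the degree $[\L:\K]$ is a power of $2$. Combining this with the invariance of dimension under weak equivalence, the original LDB division algebra has dimension a power of $2$ as well, completing the proof.

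There is essentially no obstacle here beyond assembling the two structure theorems and noting the characteristic-dependent statement that anisotropy implies non-degeneracy; all the real content is buried in Theorems \ref{nondegeneratetheo} and \ref{degeneratetheo}, which is why this appears as a corollary rather than a theorem.
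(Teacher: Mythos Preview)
Your proof is correct and follows exactly the approach the paper intends: the corollary is stated immediately after Theorems \ref{nondegeneratetheo} and \ref{degeneratetheo} as a direct consequence of them, with the non-degenerate case handled by the first theorem (and the observation, already made in the paper, that anisotropy forces non-degeneracy in characteristic not $2$) and the degenerate case handled by the second theorem together with the fact that hyper-radicial extensions have $2$-power degree. There is nothing to add.
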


\begin{cor}\label{reversecorollary}
Let $(A,\star)$ be a division algebra. If $\star$ has a bilinear quasi-left-inversion, then $(A,\star)$
is isotopic to the reverse division algebra $(A,\star^{\text{op}})$, with $\star^{\text{op}} : (x,y) \mapsto y \star x$,
and in particular $\star^{\text{op}}$ has a bilinear quasi-left-inversion.
\end{cor}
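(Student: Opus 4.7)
The plan is to invoke the two structure theorems in order to reduce to the canonical models, observe that each model is anti-isomorphic to itself, and then transport the result back through the isotopy.

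Starting from $(A,\star)$ equipped with a bilinear quasi-left-inversion $\bullet$, the triple $(A,\star,\bullet)$ is an LDB division algebra. By Theorem \ref{nondegeneratetheo} in the non-degenerate case and by Theorem \ref{degeneratetheo} otherwise, it is weakly equivalent to an LDB division algebra $(B,\star',\bullet')$ that is either of Hurwitz type or of hyper-radicial type. By definition of weak equivalence, $(A,\star)$ is isotopic to $(B,\star')$.

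I would then verify by direct inspection that in both model cases $(B,\star')$ is isomorphic to its opposite. In the hyper-radicial case, $(B,\star')$ is a commutative field, so $\star' = \star'^{\text{op}}$ is tautological. In the Hurwitz case, the conjugation $x \mapsto \overline{x}$ is an anti-automorphism of $(B,\star')$: this is standard, and if needed can be checked case by case on the list of canonical Hurwitz algebras recalled in the introduction. In each situation, this gives an isomorphism, and in particular an isotopy, from $(B,\star')$ to $(B,\star'^{\text{op}})$.

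Next, taking opposites preserves isotopy: from an isotopy $x \star y = h^{-1}(f(x) \star' g(y))$ one reads off the identity $x \star^{\text{op}} y = h^{-1}(g(x) \star'^{\text{op}} f(y))$, hence an isotopy from $(A,\star^{\text{op}})$ to $(B,\star'^{\text{op}})$. Chaining the three isotopies yields an isotopy from $(A,\star)$ to $(A,\star^{\text{op}})$, which is the first assertion.

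For the second assertion, having a bilinear quasi-left-inversion is invariant under isotopy: given an isotopy $x \star y = h^{-1}(f(x) \star^{\text{op}} g(y))$ and the quasi-left-inversion $\bullet$ for $\star$ with attached quadratic form $q$, the bilinear law $(u,v) \mapsto g(f^{-1}(u) \bullet h^{-1}(v))$ is readily checked to be a bilinear quasi-left-inversion for $\star^{\text{op}}$, with attached quadratic form $u \mapsto q(f^{-1}(u))$. Applying this transport to the isotopy produced in the previous step finishes the proof. The genuinely hard work is all encapsulated in the structure theorems; here the only obstacle is careful bookkeeping of the isotopy arrows and of the side on which one inverts.
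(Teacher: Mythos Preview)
Your proof is correct and follows essentially the same route as the paper: reduce via Theorems \ref{nondegeneratetheo} and \ref{degeneratetheo} to the canonical Hurwitz or hyper-radicial models, observe that each model is isotopic to its opposite (via the conjugation anti-automorphism in the Hurwitz case, by commutativity in the hyper-radicial case), and transport back through the isotopy. The paper's proof is terser and leaves the bookkeeping implicit; your explicit verification that taking opposites preserves isotopy and that the quasi-left-inversion property transfers under isotopy is a welcome fleshing-out of details the paper takes for granted.
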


To prove Corollary \ref{reversecorollary}, we note that the result is obvious for $1$-dimensional division algebras and that
if $(A,\star,\bullet)$ denotes an arbitrary LDB division algebra
of quadratic, quaternionic, octonionic or hyper-radicial type, then $\star$ is isotopic to $\star^{\text{op}}$.
Thus, by Theorems \ref{nondegeneratetheo} and \ref{degeneratetheo}, this property holds for all LDB division algebras.

\vskip 3mm
There is an additional powerful result whose statement encompasses both degenerate and non-degenerate LDB division algebras:

\begin{theo}\label{classbyquadformtheo}
Two LDB division algebras are equivalent if and only if their attached quadratic forms are equivalent. \\
Two LDB division algebras are weakly equivalent if and only if their attached quadratic forms are similar.
\end{theo}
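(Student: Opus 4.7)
The forward direction—equivalent LDB algebras have equivalent forms, and weakly equivalent ones have similar forms—is asserted as immediate in the introduction, so the substance lies in the two converses. Moreover, the weak equivalence converse reduces to the equivalence converse: if $q\simeq \lambda q'$ for some $\lambda\in\K^*$, then $(B,\star',\lambda\bullet')$ is an LDB algebra with attached form $\lambda q'\simeq q$, so the equivalence case would make it equivalent to $(A,\star,\bullet)$, which in turn implies weak equivalence of the original pair. Hence the plan is to concentrate entirely on the equivalence converse.

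For the equivalence converse, assume $q\simeq q'$ and pick an isometry $\phi:(A,q)\to(B,q')$; transporting $(B,\star',\bullet')$ along $\phi$ yields an equivalent LDB structure on $A$ with attached form exactly $q$, so we may assume both algebras live on the same vector space $A$ with the same attached form $q$. I would then split into the non-degenerate and degenerate cases and invoke Theorems \ref{nondegeneratetheo} and \ref{degeneratetheo} respectively. If $q$ represents $1$, each algebra is equivalent to a canonical model—Hurwitz or hyper-radicial—whose attached form is precisely $q$; the two canonical models are then isomorphic, by the classical uniqueness of a Hurwitz algebra from its norm form (respectively, of a hyper-radicial extension from its squaring form), which immediately gives the desired equivalence between the two LDB algebras.

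The main obstacle is the case where $q$ does not represent $1$: here Theorems \ref{nondegeneratetheo} and \ref{degeneratetheo} yield only weak equivalence to a canonical model, since the canonical model's form represents $1$ while $q$ does not. I would bridge this gap with the following lemma: for every similarity factor $\mu$ of $q$, the LDB algebras $(A,\star,\bullet)$ and $(A,\star,\mu\bullet)$ are equivalent. For non-degenerate $q$, this suffices because $q$ is similar to a Pfister form and therefore satisfies $G(q)=D(q)$, so the orbit of $\bullet$ under similarity scalings traverses every rescaling that keeps the attached form equivalent to $q$; an analogous statement holds on the hyper-radicial side, where the squaring form on $\L$ already has a similarly rich group of multipliers coming from multiplication by nonzero elements of $\L$. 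The lemma itself would be proved by writing down explicit triples $(f,g,h)$ on the canonical model: in the Hurwitz case, by realising $\mu$ as the norm of a vector and using left or right multiplication by that vector as the required similitudes, leaning on the multiplicativity of the Hurwitz norm; in the hyper-radicial case, by using multiplication by a suitable element of $\L$. Once this lemma is in hand, both LDB algebras reduce to the same canonical model with the same normalisation of $\bullet$, and the equivalence—and then, via the initial reduction, the weak equivalence—follows.
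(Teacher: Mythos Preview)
Your reductions---second statement to first, and to a common space with the same form $q$---are correct and match the paper's Remarks~\ref{equivalenttoweaklyequivalent} and~\ref{strategyforequivalenceremark}. But you missed the elementary observation of Remark~\ref{reductiontorepresent1} that dissolves your ``main obstacle'' in one line: the pair $(A,\star,\bullet)$, $(B,\star',\bullet')$ is equivalent if and only if $(A,\star,\lambda\bullet)$, $(B,\star',\lambda\bullet')$ is, for any fixed $\lambda\in\K^*$ (the same triple $(f,g,h)$ witnesses both). Since the latter pair has attached forms $\lambda q$ and $\lambda q'$, choosing $\lambda$ so that $\lambda q$ represents $1$ reduces everything to the case you already handle. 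Your similarity-factor lemma is therefore unnecessary.

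That lemma is also where your sketch is weakest. In the hyper-radicial case your plan works cleanly (take $f=L_a$, $g=\id$, $h=L_a^{-1}$ with $a^2=\mu$), but in the Hurwitz case a triple built from one-sided multiplications requires identities of the shape $(ax)y=a(xy)$, which fail in the octonions; producing an explicit $(f,g,h)$ there would need Moufang-type identities deployed with care, and you have not indicated how. The lemma can be rescued abstractly---$(C,\star_C,\mu\bullet_C)$ has form $\mu N_C\simeq N_C$, which represents $1$, so Theorem~\ref{nondegeneratetheo} plus Hurwitz uniqueness returns you to $(C,\star_C,\bullet_C)$---but then the explicit construction you advertised is not what is doing the work.

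For the ``represents $1$'' case, your route (Theorems~\ref{nondegeneratetheo}/\ref{degeneratetheo} plus classical uniqueness of the canonical model from its form) is valid and differs mildly from the paper's. The paper does not quote external uniqueness results: in each dimension it shows directly, via the Standardization lemma, Clifford-algebra structure, and Skolem--Noether, that any two $e$-standard LDB algebras with the \emph{same} form $q$ are equivalent (this is the content of the various Claims in Sections~\ref{dim1et2section}--\ref{char2section}); Theorems~\ref{nondegeneratetheo}, \ref{degeneratetheo}, and~\ref{classbyquadformtheo} then fall out simultaneously. Your approach trades that internal argument for a citation; both are fine, but the paper's is self-contained.
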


\begin{Rem}\label{equivalenttoweaklyequivalent}
In Theorem \ref{classbyquadformtheo}, the direct implications are already known, and only the converse implications are non-trivial.
Moreover, the second statement in this theorem is actually an easy consequence of the first one.
Assume indeed that the first one holds, and let $(A,\star,\bullet)$ and $(B,\star',\bullet')$ be LDB division algebras
with associated quadratic forms $q_A$ and $q_B$ that are similar. Choose $\lambda \in \K^*$ such that $q_B \simeq \lambda q_A$.
Then, we note that $(A,\star,\lambda \bullet)$ is an LDB division algebra with attached quadratic form $\lambda q_A$.
From the first statement in Theorem \ref{classbyquadformtheo}, we deduce that $(A,\star,\lambda \bullet)$ is equivalent to
$(B,\star',\bullet')$, whence $(A,\star,\bullet)$ and $(B,\star',\bullet')$ are weakly equivalent.
\end{Rem}

\begin{Rem}\label{reductiontorepresent1}
Let $\lambda \in \K^*$. Then, the LDB division algebras $(A,\star,\bullet)$ and $(B,\star',\bullet')$, with respective attached quadratic forms
$q_A$ and $q_B$, are equivalent if and only if $(A,\star,\lambda\bullet)$ and $(B,\star',\lambda\bullet')$ are equivalent.
Note that the respective quadratic forms attached to the latter are $\lambda q_A$ and $\lambda q_B$.
Using this, we see that the converse implication in the first statement of Theorem \ref{classbyquadformtheo}
needs only be proved in the situation where $1$ is represented by each one of the quadratic forms attached to the LDB division algebras under consideration.
\end{Rem}

\begin{Rem}\label{isotopeHurwitz}
It is known (see e.g.\ \cite{Darpo}) that two Hurwitz algebras are isotopic if and only if their attached quadratic forms are similar
(and that two quadratic forms that are attached to Hurwitz algebras are similar if and only if they are equivalent).
Thus, in the non-degenerate case the second statement in Theorem \ref{classbyquadformtheo} is not new. However,
the first one is new and the second one is an easy corollary of it, as explained in Remark \ref{equivalenttoweaklyequivalent}.
\end{Rem}

\subsection{Some consequences}

Let $U,V,U',V'$ be finite-dimensional vector spaces.
Remember that two linear subspaces $\calS \subset \calL(U,V)$ and $\calS' \subset \calL(U',V')$ are called equivalent when
there are isomorphisms $f : U \overset{\simeq}{\rightarrow} U'$ and $g : V \overset{\simeq}{\rightarrow} V'$ such that
$\calS'=\bigl\{g \circ s \circ f^{-1} \mid s \in \calS\bigr\}$ (which amounts to saying that, in different choices of bases of the source and goal spaces,
the same space of matrices may be used to represent both $\calS$ and $\calS'$).
If $U=V$ and $U'=V'$, the endomorphism spaces $\calS$ and $\calS'$ are called similar when, in the above condition, we require that $g=f$.
In \cite{dSPLLD1}, the following (non-trivial) result was established:

\begin{theo}
Let $A$ and $B$ be LDB division algebras. Then, $\calT_A$ and $\calT_B$ are similar (respectively, equivalent)
if and only if $A$ and $B$ are equivalent (respectively, weakly equivalent).
\end{theo}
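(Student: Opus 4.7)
The plan is to establish the direct implications by explicit construction and to handle the converse implications by reducing to the attached quadratic forms via Theorem \ref{classbyquadformtheo}.

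For the direct implications, I would start from an equivalence of LDB division algebras $(A,\star,\bullet)$ and $(B,\star',\bullet')$, witnessed by isomorphisms $f,g:B \overset{\simeq}{\rightarrow} A$ and $h:A \overset{\simeq}{\rightarrow} B$. Setting $\alpha:B^2 \to A^2$ by $\alpha(y,z):=(h^{-1}(y),g(z))$, a direct expansion from the defining identities for $\star'$ and $\bullet'$ yields
$$\Gamma_B\bigl(x+(\lambda,\mu),\cdot\bigr) = \alpha^{-1} \circ \Gamma_A\bigl(f(x)+(\lambda,\mu),\cdot\bigr) \circ \alpha$$
for all $(x,\lambda,\mu) \in B \times \K^2$, exhibiting $\calT_A$ and $\calT_B$ as similar operator spaces. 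For weakly equivalent LDB algebras, I would combine the above with Remark \ref{equivalenttoweaklyequivalent} and observe that for $\lambda \in \K^*$, replacing $\bullet$ by $\lambda\bullet$ modifies $\calT_A$ only by a rescaling on the second $A$-summand of $A^2$, producing an equivalent but generally not similar operator space, which delivers the direct implication in the weak-equivalence case.

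For the converse, the cornerstone is Proposition 5.5 of \cite{dSPLLD1}, which identifies the singular operators in $\calT_A$ (via the parametrization $X \mapsto \Gamma_A(X,\cdot)$) with the isotropy cone of the form $\widetilde{q}_A: x+(\lambda,\mu) \mapsto q_A(x)-\lambda\mu$ on $A \oplus \K^2$. An equivalence (resp.\ similarity) of $\calT_A$ and $\calT_B$ then yields a linear bijection $\Phi:A \oplus \K^2 \to B \oplus \K^2$ sending the isotropy cone of $\widetilde{q}_A$ onto that of $\widetilde{q}_B$, and, by the classical principle that a non-degenerate quadratic form in sufficiently many variables is determined up to a non-zero scalar by its projective zero locus, $\widetilde{q}_A$ and $\widetilde{q}_B$ are similar. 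In the similarity case of operator spaces, the constraint $f=g$ means conjugation by $f$ fixes $\id_{A^2}$; since the identity is parametrized by $0+(1,1)$ in both $\calT_A$ and $\calT_B$, this forces $\Phi(0+(1,1))=0+(1,1)$, and since $\widetilde{q}_A$ and $\widetilde{q}_B$ both take value $-1$ there, the similarity scalar must be $1$, whence $\widetilde{q}_A \simeq \widetilde{q}_B$. Witt cancellation of the hyperbolic summand $(\lambda,\mu) \mapsto -\lambda\mu$ then yields the corresponding relation between $q_A$ and $q_B$, and Theorem \ref{classbyquadformtheo} concludes.

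The main obstacle is the step recovering $\widetilde{q}_A$ up to a non-zero scalar from its isotropy cone. This is classical over infinite fields of characteristic not $2$, but is delicate in characteristic $2$, particularly when $A$ is of hyper-radicial type and $q_A$ is totally degenerate, in which case $\widetilde{q}_A$ has a large radical. A robust route is to enrich the data extracted from $\calT_A$ by also tracking the ranks of its singular operators: a refined version of Proposition 5.5 should relate the corank of $\Gamma_A(X,\cdot)$ to the position of $X$ with respect to both $\widetilde{q}_A$ and the radical of its polar form, and this should suffice to reconstruct $\widetilde{q}_A$ intrinsically in all characteristics, after which Witt cancellation and Theorem \ref{classbyquadformtheo} close the argument uniformly.
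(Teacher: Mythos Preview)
The paper does not prove this theorem: it is explicitly imported from \cite{dSPLLD1} (``the following (non-trivial) result was established''), so there is no in-paper proof to compare against.

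Your direct implications are correct; the conjugation by $\alpha(y,z)=(h^{-1}(y),g(z))$ is exactly the right construction, and the verification goes through as you indicate.

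For the converse, your route is logically sound but necessarily different from whatever appears in \cite{dSPLLD1}. You invoke Theorem~\ref{classbyquadformtheo}, the main result of \emph{this} paper, to deduce a theorem that was proved in the \emph{earlier} paper \cite{dSPLLD1}, before the classification by quadratic forms was available. This is not circular---Theorem~\ref{classbyquadformtheo} is established here without appeal to the cited theorem---but it does invert the paper's logical flow (where the cited theorem plus Theorem~\ref{classbyquadformtheo} together yield Theorem~1.7). The original argument in \cite{dSPLLD1} must work directly with the internal structure of $\calT_A$ (kernels and images of the operators $\Gamma_A(X,-)$, the distinguished subspaces $A\times\{0\}$ and $\{0\}\times A$, etc.)\ to recover $\star$ and $\bullet$ up to equivalence, without passing through $q_A$. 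Your approach, by contrast, is essentially the paper's own proof of Corollary~\ref{preservercorollary}, upgraded from rank-preserving maps to genuine equivalences/similarities of operator spaces.

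Your caution about the quadratic Nullstellensatz in characteristic~$2$ is well placed. Note though that $\widetilde{q}_A=q_A\bot\varphi$ with $\varphi$ hyperbolic, so $\widetilde{q}_A$ is never totally degenerate even when $q_A$ is; this is what makes the paper's unqualified invocation in Corollary~\ref{preservercorollary} defensible. Your proposed refinement via corank data would make the argument self-contained, but the hyperbolic summand already supplies enough non-degeneracy for the standard Nullstellensatz to apply.
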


Combining this with Theorem \ref{classbyquadformtheo}, we readily deduce:

\begin{theo}
Let $A$ and $B$ be LDB division algebras, with respective quadratic forms denoted by $q_A$ and $q_B$.
The twisted operator spaces $\calT_A$ and $\calT_B$ are similar (respectively, equivalent)
if and only if the quadratic forms $q_A$ and $q_B$ are equivalent (respectively, similar).
\end{theo}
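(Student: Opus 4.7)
The plan is very short because the theorem to be proved is an immediate corollary of two results: the theorem from \cite{dSPLLD1} that characterizes similarity/equivalence of the twisted operator spaces $\calT_A,\calT_B$ in terms of equivalence/weak-equivalence of the underlying LDB division algebras $A,B$, and Theorem \ref{classbyquadformtheo}, which characterizes the latter in terms of equivalence/similarity of the attached quadratic forms $q_A,q_B$. So my strategy is simply to chain these two biconditionals.

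Concretely, I would split the proof into two parts. For the \emph{similarity} statement, I note that by the cited theorem from \cite{dSPLLD1}, $\calT_A$ and $\calT_B$ are similar if and only if the LDB division algebras $A$ and $B$ are equivalent; and by the first part of Theorem \ref{classbyquadformtheo}, the latter holds if and only if $q_A$ and $q_B$ are equivalent. Composing these two biconditionals gives the first part of the statement. For the \emph{equivalence} statement, I would argue identically, using that $\calT_A$ and $\calT_B$ are equivalent if and only if $A$ and $B$ are weakly equivalent, which, by the second part of Theorem \ref{classbyquadformtheo}, is in turn equivalent to the similarity of $q_A$ and $q_B$.

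There is no real obstacle here: all the content is packed into Theorem \ref{classbyquadformtheo} and the previously established correspondence between the operator spaces $\calT_A,\calT_B$ and the algebras $A,B$. The proof proposal is thus no more than a two-line composition of equivalences, and no additional lemma or computation is needed.
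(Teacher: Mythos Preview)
Your proposal is correct and matches the paper's own argument exactly: the paper simply states that the theorem is obtained by combining the cited result from \cite{dSPLLD1} with Theorem \ref{classbyquadformtheo}, which is precisely the two-step chain of biconditionals you describe.
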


Here is a nice corollary:

\begin{cor}\label{preservercorollary}
Let $A$ and $B$ be LDB division algebras.
The following conditions are equivalent:
\begin{enumerate}[(i)]
\item There exists a rank-preserving vector space isomorphism $\Phi : \calT_A \overset{\simeq}{\rightarrow} \calT_B$.
\item The operator spaces $\calT_A$ and $\calT_B$ are equivalent.
\item The LDB division algebras $A$ and $B$ are weakly equivalent.
\end{enumerate}
\end{cor}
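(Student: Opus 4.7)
The plan is to establish the chain $(ii)\Rightarrow(i)$, $(ii)\Leftrightarrow(iii)$, and then the non-trivial direction $(i)\Rightarrow(iii)$. The equivalence $(ii)\Leftrightarrow(iii)$ follows immediately from the preceding theorem combined with the second statement of Theorem \ref{classbyquadformtheo}: equivalence of $\calT_A$ and $\calT_B$ amounts to similarity of $q_A$ and $q_B$, which amounts to weak equivalence of $A$ and $B$. And $(ii)\Rightarrow(i)$ is trivial: if $\calT_B=\{g\circ s\circ f^{-1}\mid s\in \calT_A\}$, then $s\mapsto g\circ s\circ f^{-1}$ is a rank-preserving linear isomorphism from $\calT_A$ onto $\calT_B$.

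For $(i)\Rightarrow(iii)$, suppose $\Phi:\calT_A\overset{\simeq}{\rightarrow}\calT_B$ is rank-preserving. Then $\Phi$ bijects singular operators with singular operators. Using the identifications $\calT_A\simeq A\oplus \K^2$ and $\calT_B\simeq B\oplus \K^2$ via $X\mapsto \Gamma_A(X,-)$ and $Y\mapsto \Gamma_B(Y,-)$, Proposition 5.5 of \cite{dSPLLD1} identifies these singular loci with the zero sets of the quadratic forms $\widetilde{q_A}:x+(\lambda,\mu)\mapsto q_A(x)-\lambda\mu$ and $\widetilde{q_B}$. Thus $\Phi$ induces a linear isomorphism $\widetilde{\Phi}:A\oplus \K^2\overset{\simeq}{\rightarrow}B\oplus \K^2$ which carries the isotropy cone of $\widetilde{q_A}$ bijectively onto that of $\widetilde{q_B}$.

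The heart of the proof is to deduce from this that $\widetilde{q_A}$ and $\widetilde{q_B}$ are similar. I would first show that $\widetilde{q_A}$ is irreducible as a polynomial: any factorization into two linear forms would, by restriction to $A$, express $q_A$ as a product of two linear forms, which is impossible when $\dim A\geq 2$ by anisotropy, the case $\dim A=1$ being excluded by a short coefficient comparison using the cross-term $-\lambda\mu$. Combined with the abundance of $\K$-rational isotropic vectors coming from the hyperbolic summand (for instance $x+(\lambda,\lambda^{-1}q_A(x))$ with $x\in A\setminus\{0\}$ and $\lambda\in \K^*$), this irreducibility forces $\widetilde{q_B}\circ \widetilde{\Phi}=c\,\widetilde{q_A}$ for some $c\in \K^*$. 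Since $\widetilde{q_A}\simeq q_A\perp h$ and $\widetilde{q_B}\simeq q_B\perp h$, with $h$ denoting the hyperbolic plane $(\lambda,\mu)\mapsto -\lambda\mu$, and since $c\,h\simeq h$, Witt cancellation yields $q_A\simeq c\,q_B$. Theorem \ref{classbyquadformtheo} then delivers $(iii)$.

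The main obstacle is the step ``irreducible quadratic polynomials with equal $\K$-rational zero loci are proportional''. Over an algebraically closed field this is an immediate consequence of the Nullstellensatz, but over an arbitrary $\K$ one must carefully exploit the explicit family of isotropic vectors above to pin down the scalar $c$. A secondary subtlety appears in the characteristic-$2$ degenerate case: the polar form of $\widetilde{q_A}$ is then itself degenerate, so Witt cancellation must be applied in the version valid for quadratic (not necessarily non-singular) forms in characteristic $2$, or replaced by a direct argument that splits off the hyperbolic plane $h$ as an orthogonal summand from both $\widetilde{q_A}$ and $\widetilde{q_B}$.
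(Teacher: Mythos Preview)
Your argument follows the same route as the paper's: show $(ii)\Leftrightarrow(iii)$ and $(ii)\Rightarrow(i)$ by the preceding results, and for $(i)\Rightarrow(iii)$ pass through the isotropy cones of $\widetilde{q_A}$ and $\widetilde{q_B}$, deduce similarity of these forms, cancel the hyperbolic plane, and invoke Theorem~\ref{classbyquadformtheo}. The only difference is that the paper dispatches the step ``same isotropy cone implies proportional'' in one line by citing the \emph{quadratic Nullstellensatz}, whereas you sketch a direct proof via irreducibility and the explicit one-parameter families of isotropic vectors. Your sketch is sound, and your worries are largely unnecessary: the quadratic Nullstellensatz applies here in its standard form (an isotropic quadratic form of dimension $\geq 3$ that is not a product of two linear forms is determined, up to scalar, by its isotropy cone), and Witt cancellation of a hyperbolic plane is valid in all characteristics since the summand being cancelled is non-degenerate.
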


\begin{proof}
We already know that conditions (ii) and (iii) are equivalent, while condition (ii) obviously implies condition (i). \\
Assume that condition (i) holds.
Denote by $\varphi$ an arbitrary $2$-dimensional hyperbolic quadratic form.
Recall that the set of all singular operators of $\calT_A$ (respectively, of $\calT_B$) is the isotropy cone of a
quadratic form on $\calT_A$ (respectively, on $\calT_B$) that is equivalent to $q_A \bot \varphi$ (respectively, to $q_B \bot \varphi$).
Thus, with the quadratic Nullstellensatz, we deduce that there exists a non-zero scalar $\lambda$ such that
$q_B \bot \varphi \simeq \lambda (q_A \bot \varphi)$. As $\varphi$ is hyperbolic, we have $\lambda \varphi \simeq \varphi$, whence
$q_B \bot \varphi \simeq (\lambda q_A) \bot \varphi$ and Witt's cancellation rule yields that $q_B \simeq \lambda q_A$.
Thus, Theorem \ref{classbyquadformtheo} yields that $A$ is weakly equivalent to $B$.
\end{proof}

\subsection{Proof strategy}

Our proof has two main steps. The first one consists in the reduction to the special situation below:

\begin{Def}
Let $(A,\star,\bullet)$ be an LDB division algebra and
$e$ be a non-zero element of $A$. For $x \in A$, we set $\overline{x}:=-s_e(x)$, where $s_e$ is the reflection of $(A,q)$ along $\K e$.
We say that the LDB division algebra $(A,\star,\bullet)$ is \textbf{$e$-standard} when  the following conditions hold:
\begin{enumerate}[(i)]
\item $e \star -=\id_A$, i.e.\ $e$ is a left-sided unity for $\star$;
\item $\forall (x,y) \in A^2, \; x \bullet y=\overline{x} \star y$.
\end{enumerate}
\end{Def}

Note that $\overline{e}=e$ in this situation, whence conditions (i) and (ii) yield $q(e)=1$.
It is striking that all the special LDB division algebras we have considered in the introduction
have a left-sided unity and are standard with respect to it!

\begin{Rem}\label{standardidentityremark}
Let $(A,\star,\bullet)$ be an LDB division algebras that is standard with respect to one of its elements $e$,
and denote by $q$ its attached quadratic form and by $b_q$ its polar form.
For all $x \in A$, the identity
$$\forall y\in A, \quad x \star (-s_e(x) \star y)=q(x)\,y$$
can be restated as
\begin{equation}\label{standardidentity}
b_q(x,e)\,(x \star -)-(x \star -)^2=q(x)\,\id_A.
\end{equation}
In particular, whenever $x$ is $q$-orthogonal to $e$, we get
$$(x \star -)^2=-q(x)\,\id_A,$$
which hints to a connection with Clifford algebras.
\end{Rem}

The following result, which will be established in Section \ref{reductiontostandardsection}, allows
one to reduce the situation to the one of standard LDB division algebras:

\begin{lemma}[Standardization lemma]\label{reductionlemma}
Let $(A,\star,\bullet)$ be an LDB division algebra with attached quadratic form $q$, and let $e \in A$ be such that $q(e)=1$.
Then, there are two laws $\star'$ and $\bullet'$ on $A$ such that:
\begin{enumerate}[(i)]
\item $(A,\star',\bullet')$ is an $e$-standard LDB division algebra;
\item $(A,\star',\bullet')$ is equivalent to $(A,\star,\bullet)$;
\item $(A,\star',\bullet')$ has $q$ as its attached quadratic form.
\end{enumerate}
\end{lemma}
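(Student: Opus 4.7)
The plan is to keep the second law essentially intact while rescaling the first law by ``left-multiplication by $e$'', then to verify the $e$-standard conditions by direct computation.

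Since $q(e)=1$, in particular $e\ne0$, and by regularity of $\star$ the left-multiplication $L_e:A\to A,\;y\mapsto e\star y$ is injective, hence an isomorphism of $A$. I then apply the definition of equivalence with $B=A$, $f=g=\id_A$ and $h:=L_e^{-1}$, which produces the candidate laws
\begin{equation*}
x\star' y:=L_e^{-1}(x\star y), \qquad x\bullet' y:=x\bullet(e\star y).
\end{equation*}
By construction $(A,\star',\bullet')$ is equivalent to $(A,\star,\bullet)$, so the observation made in the paragraph preceding the definition of equivalence already guarantees that it is an LDB division algebra with attached quadratic form $x\mapsto q(f(x))=q(x)$; thus items (ii) and (iii) of the lemma hold, and condition (i) of the $e$-standard definition is transparent since $e\star' y=L_e^{-1}(L_e(y))=y$. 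The only substantive task is to verify condition (ii) of the $e$-standard definition, namely $x\bullet' y=\overline{x}\star' y$ for all $x,y\in A$.

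Using $q(e)=1$, the standard formula $s_e(x)=x-b_q(x,e)\,e/q(e)$ yields $\overline{x}=-s_e(x)=b_q(x,e)\,e-x$, so $\overline{x}\star' y=b_q(x,e)\,y-x\star' y$. Applying $L_e$ to both sides of the target identity reduces it to
\begin{equation*}
e\star\bigl(x\bullet(e\star y)\bigr)=b_q(x,e)\,(e\star y)-x\star y.
\end{equation*}
To obtain this, I polarize the LDB identity $x\star(x\bullet y)=q(x)\,y$ in the variable $x$, which gives
\begin{equation*}
x\star(x'\bullet y)+x'\star(x\bullet y)=b_q(x,x')\,y
\end{equation*}
for all $x,x',y\in A$. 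Specialising to $x'=e$ with $y$ replaced by $e\star y$ yields
\begin{equation*}
x\star\bigl(e\bullet(e\star y)\bigr)+e\star\bigl(x\bullet(e\star y)\bigr)=b_q(x,e)\,(e\star y).
\end{equation*}
Finally, since $q(e)=1$ we have $e\star(e\bullet z)=q(e)\,z=z$ for every $z$, hence $e\bullet-=L_e^{-1}$ and in particular $e\bullet(e\star y)=y$. Substituting this into the previous display delivers the required identity.

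The main obstacle is pinning down the $e$-standard formula for $\bullet'$: the construction of $\star'$ is essentially forced once one sees that $L_e$ is the right rescaling map, but proving that the companion $\bullet'$ agrees with the reflection-based prescription $(x,y)\mapsto\overline{x}\star' y$ is not formal. It requires both the polarised LDB identity and the observation that $q(e)=1$ makes left-multiplication by $e$ its own ``inverse'' under $\bullet$; once these two ingredients are on the table the verification is mechanical.
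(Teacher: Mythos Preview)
Your proof is correct, and it is considerably more direct than the paper's own argument. The paper reaches the Standardization lemma via a substantial detour: it first establishes Proposition~\ref{keylemma} (for an arbitrary reflection $s$ of $(A,q)$, there exist automorphisms $g,h$ with $x\bullet y=h(s(x)\star g(y))$), whose proof in turn rests on the twisted-operator-space machinery and on Lemma~5.11 of \cite{dSPLLD1}; only then does it massage the resulting identity $N(x)\circ N(\overline{x})=q(x)\,f$ by showing that $N(e)$ is central among the $N(x)$ and conjugating by $N(e)^{-1}$. Your approach bypasses all of this: the single choice $h=L_e^{-1}$ in the definition of equivalence already forces $e$ to be a left unit, and the polarized LDB identity $x\star(x'\bullet y)+x'\star(x\bullet y)=b_q(x,x')\,y$ together with $e\bullet-=L_e^{-1}$ (from $q(e)=1$) is exactly what is needed to verify $x\bullet' y=\overline{x}\star' y$. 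The paper's route does yield the more general Proposition~\ref{keylemma} as a byproduct, but that proposition is not invoked elsewhere in the article, so for the purposes of the Standardization lemma your argument is strictly more economical and self-contained.
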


After we prove this, we shall analyze standard LDB division algebras by using classical structure theorems on Clifford algebras.
Firstly, we will prove that the dimension of a non-degenerate LDB division algebra must belong to $\{1,2,4,8\}$
(see Section \ref{dimensionsection}). Then, we will determine the $1$-dimensional and $2$-dimensional LDB division algebras
(Section \ref{dim1et2section}). In the remaining sections,
we shall complete the theory of LDB division algebras, first for fields of characteristic not $2$
(Section \ref{charnot2section}) and finally for fields of characteristic $2$ (Section \ref{char2section}).

\begin{Rem}\label{strategyforequivalenceremark}
Before we move forward, it is important to lay out the main technique for proving
the first statement of Theorem \ref{classbyquadformtheo}.
As we have seen, we only need to care about the case when the attached quadratic forms represent $1$.
Now, let $(A,\star,\bullet)$ and $(B,\star',\bullet')$ be LDB division algebras whose respective attached quadratic forms $q$ and $q'$
are equivalent and represent $1$. Thus, we have an isomorphism $f : A \overset{\simeq}{\rightarrow} B$
such that $q'(f(x))=q(x)$ for all $x \in A$. Then, we define two laws $\star_1$ and $\bullet_1$ on $A$
by
$$x \star_1 y:=f^{-1}(f(x) \star' f(y)) \quad \text{and} \quad x \bullet_1 y:=f^{-1}(f(x) \bullet' f(y)),$$
and we see that $(A,\star_1,\bullet_1)$ is an LDB division algebra that is equivalent to $(B,\star',\bullet')$
and whose attached quadratic form is $q$. Thus, we need only consider the case when $B=A$ and $q'=q$.

Assume now that we are in this special case. Then, we can fix a vector $e \in A$ such that $q(e)=1$, and the Standardization lemma shows that
we may assume that both $(A,\star,\bullet)$ and $(B,\star',\bullet')$ are $e$-standard.
In order to prove that these LDB division algebras are equivalent, it suffices to exhibit two automorphisms $h$ and $u$ of
the vector space $A$ such that $u$ commutes with $x \mapsto \overline{x}$ and
\begin{equation}\label{conjugateidentity}
\forall x \in A, \quad h \circ (u(x) \star -) \circ h^{-1}=x \star' -.
\end{equation}
Indeed, if we have two such automorphisms, then we obtain
$$\forall x \in A, \quad h \circ (u(x) \bullet -) \circ h^{-1}=h \circ (\overline{u(x)} \star -)\circ h^{-1}
=h \circ (u(\overline{x}) \star -)\circ h^{-1}=\overline{x} \star' -=x \bullet' -,$$
and combining this with \eqref{conjugateidentity} yields that $(A,\star,\bullet)$ is equivalent to $(B,\star',\bullet')$.
\end{Rem}

Finally, the following basic lemma will be used in a few instances.

\begin{lemma}\label{minipoly}
Let $(A,\star)$ be a division algebra and $u \in \calL(A)$ be an endomorphism that commutes with
$x \star -$ for all $x \in A$. Then, the minimal polynomial of $u$ is irreducible.
\end{lemma}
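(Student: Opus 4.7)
The plan is to argue by contradiction on the irreducibility of the minimal polynomial $p$ of $u$. Suppose $p=fg$ with $\deg f \geq 1$ and $\deg g \geq 1$. By the minimality of $p$, the endomorphism $g(u)$ is non-zero, so we can choose $y \in A$ with $z:=g(u)(y)\neq 0$; and then $f(u)(z)=f(u)g(u)(y)=p(u)(y)=0$.

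The crucial observation is that every polynomial expression in $u$ still commutes with $x \star -$ for every $x \in A$, since $u$ does. Applying this to $f(u)$ at the vector $x \star z$ yields
$$f(u)(x \star z)= x \star f(u)(z)=0 \quad \text{for every } x \in A.$$
Hence the image of the linear map $R_z : x \mapsto x \star z$ is entirely contained in $\ker f(u)$.

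Now, because $z\neq 0$ and $(A,\star)$ is a division algebra, $R_z$ has trivial kernel, and being an endomorphism of the finite-dimensional space $A$ it is bijective. Therefore $\ker f(u)=A$, that is $f(u)=0$, which contradicts the minimality of $p$ since $\deg f<\deg p$. We conclude that $p$ admits no non-trivial factorisation, and is thus irreducible.

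The argument is short, and the only non-routine point is recognising that $\ker f(u)$ absorbs every product $x \star z$; once that is seen, regularity of $\star$ together with finite-dimensionality immediately forces $\ker f(u)$ to be all of $A$, and the contradiction falls out.
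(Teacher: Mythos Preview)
Your proof is correct and follows essentially the same approach as the paper's: both arguments show that the kernel of a proper-degree polynomial in $u$ absorbs all products $x \star z$ once it contains a single nonzero vector $z$, and then invoke regularity of $\star$ (together with finite-dimensionality) to conclude this kernel is all of $A$. The only cosmetic difference is that the paper phrases this directly---picking an irreducible factor $p$ of the minimal polynomial and showing $p(u)=0$---rather than setting up a contradiction from a factorisation $p=fg$.
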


\begin{proof}
Let $p$ be an irreducible factor of the minimal polynomial of $u$. Then, $\Ker p(u)$ is stable under $x \star -$ for all $x \in A$.
On the other hand, $\Ker p(u)$ contains a non-zero vector $y$. Since $\star$ is regular,
we have $A=\{x \star y \mid x \in A\}$, whence $\Ker p(u)=A$. It follows that $p$ annihilates $u$, whence it is the minimal polynomial of $u$.
\end{proof}

\subsection{Additional notation}

Given a left vector space $V$ over a skew field $D$, we denote by $\calL(V)$ the set of all endomorphisms of $V$.
To avoid any confusion, we shall denote this set by $\calL_D(V)$ whenever necessary.

Given a quadratic form $q$ on a vector space $V$ (over a field), the Clifford algebra $C(q)$
is the quotient of the tensor algebra of $V$ by the ideal generated by the set of all elements $x \otimes x-q(x).1$ with $x \in V$.
It has a natural structure of $\Z/2$-graded algebra, and we shall denote by $C_0(q)$ its even component.

Finally, assuming that $\K$ has characteristic $2$, and given $(a,b)\in \K^2$, we denote by $[a,b]$ the quadratic form
$(x,y) \mapsto ax^2+xy+by^2$ on $\K^2$.

\section{The reduction to standard LDB division algebras}\label{reductiontostandardsection}

The purpose of the section is to prove Lemma \ref{reductionlemma}, thereby limiting the study
of LDB division algebras to the one of standard LDB division algebras.
This non-trivial result will be obtained by refining some techniques that were featured
in Section 5.3 of \cite{dSPLLD1}.

\subsection{The key lemma}

In \cite[Corollary 5.13]{dSPLLD1}, we have shown that every LDB division algebra $(A,\star,\bullet)$ is equivalent to its opposite algebra\footnote{To see that $(A,\bullet,\star)$ is an LDB division algebra, one uses the fact that $\star$ is regular to
deduce identity $\forall (x,y)\in A^2, \; x \bullet (x \star y)=q(x)\, y$
from identity $\forall (x,y)\in A^2, \; x \star (x \bullet y)=q(x)\, y$ applied to the pair $(x,x \star y)$.}$(A,\bullet,\star)$.
Here, we shall refine this statement as follows:

\begin{prop}\label{keylemma}
Let $(A,\star,\bullet)$ be an LDB division algebra with attached quadratic form $q$, and let
$s$ be a reflection of the quadratic space $(A,q)$. Then, there are
automorphisms $g$ and $h$ of $A$ such that
$$\forall (x,y)\in A^2, \quad x \bullet y=h(s(x) \star g(y)).$$
\end{prop}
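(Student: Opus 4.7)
The plan is to exhibit $g$ and $h$ explicitly using the ``left multiplication by $v$'' operators, where $v$ is any vector with $q(v) \neq 0$ defining the prescribed reflection via $s(x) = x - \frac{b_q(x,v)}{q(v)}\,v$. Since every reflection of $(A,q)$ is of this form, fixing an appropriate $v$ reduces the proof to a single computation. The two inputs to exploit are the defining identity $x \star (x \bullet y) = q(x)\,y$ and its twin $x \bullet (x \star y) = q(x)\,y$, the latter being available because $(A,\bullet,\star)$ is itself an LDB division algebra with the same attached form $q$ (as recalled in the footnote).

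The first step I would take is to polarize both identities, by substituting $x + x'$ for $x$ and using bilinearity. This yields
$$x \star (x' \bullet y) + x' \star (x \bullet y) = b_q(x,x')\,y \quad \text{and} \quad x \bullet (x' \star y) + x' \bullet (x \star y) = b_q(x,x')\,y.$$
Specializing $x' = v$ in the first polarized identity and solving for $v \star (x \bullet y)$, I would then apply $z \mapsto v \bullet z$ to both sides; the left-hand side collapses to $q(v)\,(x \bullet y)$ via $v \bullet (v \star z) = q(v)\,z$, producing the operator identity
$$q(v)\,(x \bullet y) = b_q(x,v)\,(v \bullet y) - v \bullet \bigl(x \star (v \bullet y)\bigr).$$

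Recognizing $\frac{b_q(x,v)}{q(v)}\,v = x - s(x)$ and using bilinearity of $\bullet$ in its first slot, I would rewrite this as
$$s(x) \bullet y = -\frac{1}{q(v)}\, v \bullet \bigl(x \star (v \bullet y)\bigr),$$
and then substitute $s(x)$ for $x$, using involutivity $s \circ s = \id_A$, to conclude
$$x \bullet y = h\bigl(s(x) \star g(y)\bigr), \qquad g := v \bullet -, \qquad h := -\tfrac{1}{q(v)}\,(v \bullet -).$$
Both $g$ and $h$ are vector-space automorphisms of $A$ because $\bullet$ is regular.

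I do not anticipate a serious obstacle: the one creative move is to polarize \emph{both} defining identities and combine them so that the inversion relation $v \bullet (v \star z) = q(v)\,z$ folds the expression $v \bullet (v \star (x \bullet y))$ back to $q(v)\,(x \bullet y)$. The only subtlety is to work uniformly across characteristics; the formula $s_v(x) = x - \frac{b_q(x,v)}{q(v)}\,v$ is a genuine isometric involution for any $v$ with $q(v) \neq 0$ even in characteristic $2$, and no step of the computation divides by $2$.
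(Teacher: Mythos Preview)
Your argument is correct and considerably more direct than the paper's. The paper proves Proposition~\ref{keylemma} by passing to the twisted operator space $\calT_A$ on $A\oplus\K^2$: it invokes an external lemma (Lemma~5.11 of \cite{dSPLLD1}, restated here as Lemma~\ref{prelimkeylemma1}) relating $\Gamma_A(s_a(x),-)$ to $\Gamma_A(s_{\pi(a)}(x),-)$, extends this by induction to products of reflections (Lemma~\ref{prelimkeylemma2}), and then exploits a factorization of the swap $t=s_b$ of the two copies of $\K$ as a product $s_d\,s_{s_d(b)}\,s_d$ of three reflections whose projections onto $A$ all coincide with $s_a$. Only after unwinding this machinery and reading off the block structure of the resulting automorphisms $F_1,G_1$ of $A^2$ does the paper extract $g$ and $h$.

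Your route bypasses all of this: polarizing $x\star(x\bullet y)=q(x)\,y$ at the pair $(x,v)$ and then applying the twin relation $v\bullet(v\star z)=q(v)\,z$ collapses everything in one line, yielding the explicit formulas $g=v\bullet-$ and $h=-q(v)^{-1}(v\bullet-)$. This is both shorter and self-contained (no appeal to \cite{dSPLLD1}), and it makes the dependence of $g,h$ on the reflecting vector $v$ completely transparent. The paper's approach, by contrast, buys a more structural viewpoint tying the result to the geometry of $\calT_A$, which is natural given that the Standardization lemma is ultimately aimed at applications to that operator space; but for the bare statement of Proposition~\ref{keylemma} your computation is the cleaner proof. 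One minor remark: you announce that you will polarize \emph{both} identities, but in fact your computation only uses the polarization of the first together with the unpolarized twin $v\bullet(v\star z)=q(v)\,z$; this is not a gap, just a slight over-announcement.
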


The proof has three steps. Before we explain them, some additional notation is required.
In the rest of the section, we fix an LDB division algebra $(A,\star,\bullet)$ with attached quadratic form denoted by $q$.

\begin{Not}
Given an anisotropic vector $x \in A \oplus \K^2$, we denote by $s_x$ the reflection of $(A \oplus \K^2,\widetilde{q})$ along $\K x$,
and by $\pi(x)$ the projection of $x$ on $A$ along $\K^2$.
\end{Not}

Our starting point is the following result, which was the first step in the proof of Lemma 5.11 of \cite{dSPLLD1}:

\begin{lemma}\label{prelimkeylemma1}
Let $a \in (A\oplus \K^2) \setminus \K^2$ be an anisotropic vector.
Then, there are automorphisms $G$ and $F$ of $A^2$ such that
$$\forall x \in A \oplus \K^2, \quad
\Gamma_A(s_a(x),-)=G \circ \Gamma_A(s_{\pi(a)}(x),-) \circ F.$$
\end{lemma}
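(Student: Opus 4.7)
My plan is to establish a Clifford-algebra-style ``conjugation'' formula for reflections in $\Gamma_A$, from which the lemma follows immediately. The crucial observation is that the linear involution
$$\sigma : A \oplus \K^2 \longrightarrow A \oplus \K^2, \qquad x_0 + (\lambda, \mu) \longmapsto x_0 + (-\mu, -\lambda),$$
is an isometry of $(A \oplus \K^2, \widetilde{q})$. The first step is to check, by direct unfolding based on the identities $x_0 \star (x_0 \bullet y) = q(x_0)\, y$ and $x_0 \bullet (x_0 \star z) = q(x_0)\, z$ (the latter being the one from the footnote above), the fundamental relation
\begin{equation}\label{daggereq}
\Gamma_A(X, -) \circ \Gamma_A(\sigma(X), -) = \widetilde{q}(X)\, \id_{A^2} \qquad \text{for all } X \in A \oplus \K^2.
\end{equation}
Substituting $X \mapsto X+Y$ in \eqref{daggereq} and using the bilinearity of $\Gamma_A$ together with the linearity of $\sigma$ polarizes it into
$$\Gamma_A(X, -) \circ \Gamma_A(\sigma(Y), -) + \Gamma_A(Y, -) \circ \Gamma_A(\sigma(X), -) = \widetilde{b}(X, Y)\, \id_{A^2},$$
where $\widetilde{b}$ denotes the polar form of $\widetilde{q}$.

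Next, I apply the polarized identity with $Y = a$, multiply both sides on the right by $\Gamma_A(a, -)$, and use \eqref{daggereq} to simplify $\Gamma_A(\sigma(a), -) \circ \Gamma_A(a, -) = \widetilde{q}(a)\, \id_{A^2}$. Combined with the trivial identity $\Gamma_A(s_a(x), -) = \Gamma_A(x, -) - \widetilde{q}(a)^{-1} \widetilde{b}(x, a)\, \Gamma_A(a, -)$, which comes from the reflection formula $s_a(x) = x - \widetilde{q}(a)^{-1} \widetilde{b}(x, a)\, a$ and the bilinearity of $\Gamma_A$, this yields the Clifford-type conjugation formula
$$\Gamma_A(s_a(x), -) = -\widetilde{q}(a)^{-1}\, \Gamma_A(a, -) \circ \Gamma_A(\sigma(x), -) \circ \Gamma_A(a, -).$$
The same reasoning, applied to $\pi(a)$---which is anisotropic as a non-zero element of $A$, so that $\Gamma_A(\pi(a), -)$ is an automorphism of $A^2$ by \cite[Proposition 5.5]{dSPLLD1}---yields
$$\Gamma_A(s_{\pi(a)}(x), -) = -q(\pi(a))^{-1}\, \Gamma_A(\pi(a), -) \circ \Gamma_A(\sigma(x), -) \circ \Gamma_A(\pi(a), -).$$

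To conclude, I solve the latter formula for $\Gamma_A(\sigma(x), -)$ and substitute into the former; this proves the lemma with the explicit choices
$$G := \frac{q(\pi(a))}{\widetilde{q}(a)}\, \Gamma_A(a, -) \circ \Gamma_A(\pi(a), -)^{-1} \quad \text{and} \quad F := \Gamma_A(\pi(a), -)^{-1} \circ \Gamma_A(a, -),$$
both of which are automorphisms of $A^2$ since $\widetilde{q}(a) \neq 0$ and $\Gamma_A(a, -), \Gamma_A(\pi(a), -)$ are invertible. I expect the only real obstacle to be guessing the involution $\sigma$ in the first step: without it, no clean form of the fundamental identity \eqref{daggereq} is available. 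Once $\sigma$ is in hand, the remainder is a faithful transcription of the familiar Clifford-algebra formula $s_a(x) = -a x a^{-1}$, with $\Gamma_A(a, -)$ playing the role of left multiplication by $a$.
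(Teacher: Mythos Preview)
Your argument is correct. The identity $\Gamma_A(X,-)\circ\Gamma_A(\sigma(X),-)=\widetilde{q}(X)\,\id_{A^2}$ checks out component by component from the defining relations $x_0\star(x_0\bullet y)=q(x_0)\,y$ and $x_0\bullet(x_0\star z)=q(x_0)\,z$, and the subsequent polarization and Clifford-type manipulation are routine. One small point you glide over: to get $\Gamma_A(\sigma(a),-)\circ\Gamma_A(a,-)=\widetilde{q}(a)\,\id_{A^2}$ you need the \emph{reversed} version of your identity, but this follows at once by substituting $X\mapsto\sigma(X)$ and using that $\sigma$ is an involutive isometry. The argument is also uniform in the characteristic, since the reflection formula $s_a(x)=x-\widetilde{q}(a)^{-1}\widetilde{b}(x,a)\,a$ is the one implicitly used throughout the paper (cf.\ the computation of $\overline{x}$ just before Remark~\ref{standardidentityremark}), and the stray minus signs cause no trouble.

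As for comparison: the paper does not prove this lemma at all but simply imports it from the first step of the proof of Lemma~5.11 in \cite{dSPLLD1}. Your proof is therefore strictly more than what the present paper offers, and it has the virtue of being self-contained and of producing the automorphisms $G$ and $F$ explicitly in terms of $\Gamma_A(a,-)$ and $\Gamma_A(\pi(a),-)$. The key insight---introducing the involution $\sigma$ so as to obtain a clean quadratic identity for $\Gamma_A$---is exactly the kind of structural observation that underlies the twisted operator space construction, and it makes transparent why reflections in $(A\oplus\K^2,\widetilde{q})$ act by conjugation at the level of $\Gamma_A$.
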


From there, we obtain the following generalization:

\begin{lemma}\label{prelimkeylemma2}
Let $a_1,\dots,a_p$ be anisotropic vectors of $(A \oplus \K^2) \setminus \K^2$.
Set $u:=s_{a_1}\circ \cdots \circ s_{a_p}$ and $v:=s_{\pi(a_1)}\circ \cdots \circ s_{\pi(a_p)}$.
Then, there are automorphisms $G$ and $F$ of $A^2$ such that
$$\forall x\in A \oplus \K^2, \quad
\Gamma_A(u(x),-)=G \circ \Gamma_A(v(x),-) \circ F.$$
\end{lemma}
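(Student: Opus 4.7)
The plan is to prove this by induction on $p$, with Lemma \ref{prelimkeylemma1} as the base case $p=1$. The conceptual challenge in the inductive step is that peeling off the first reflection via Lemma \ref{prelimkeylemma1} produces an extra $s_{\pi(a_1)}$ sitting between $\Gamma_A$ and the shorter composition; the inductive hypothesis describes what happens to $\Gamma_A$ applied to the shorter composition itself, not to that composition pre-composed with an extra reflection. The hardest step is absorbing this leftover reflection, and this will be achieved by a conjugation trick on reflections.

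For $p\geq 2$, set $u':=s_{a_2}\circ\cdots\circ s_{a_p}$, $v':=s_{\pi(a_2)}\circ\cdots\circ s_{\pi(a_p)}$, and $\tau:=s_{\pi(a_1)}$. Applying Lemma \ref{prelimkeylemma1} to $a_1$ and evaluating the resulting identity at $y=u'(x)$ gives automorphisms $G_1,F_1$ of $A^2$ with
$$\Gamma_A(u(x),-)=G_1\circ\Gamma_A(\tau(u'(x)),-)\circ F_1.$$
Since $v(x)=\tau(v'(x))$, it suffices to relate $\Gamma_A(\tau(u'(x)),-)$ to $\Gamma_A(\tau(v'(x)),-)$ by fixed automorphisms.

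To do this, I would exploit that $\pi(a_1)\in A$ and that, in every characteristic, $A$ is $\widetilde q$-orthogonal to $\K^2$: the reflection $\tau$ fixes $\K^2$ pointwise and restricts to an isometry of $(A,q)$, so it preserves the decomposition $A\oplus\K^2$. Consequently, for each anisotropic $a\in(A\oplus\K^2)\setminus\K^2$ the vector $\tau(a)$ is again an anisotropic element of $(A\oplus\K^2)\setminus\K^2$, and $\pi(\tau(a))=\tau(\pi(a))$. Combined with the universal identity $\tau\circ s_a\circ\tau^{-1}=s_{\tau(a)}$ and the involution property $\tau^{-1}=\tau$, this yields
$$\tau\circ u'=s_{\tau(a_2)}\circ\cdots\circ s_{\tau(a_p)}\circ\tau\quad\text{and}\quad\tau\circ v'=s_{\pi(\tau(a_2))}\circ\cdots\circ s_{\pi(\tau(a_p))}\circ\tau.$$

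Now apply the inductive hypothesis to the length-$(p-1)$ sequence $\tau(a_2),\dots,\tau(a_p)$ (all of whose terms satisfy the required hypotheses by the observations above): this produces $G_2,F_2\in\GL(A^2)$ such that for every $y\in A\oplus\K^2$,
$$\Gamma_A\bigl(s_{\tau(a_2)}\circ\cdots\circ s_{\tau(a_p)}(y),-\bigr)=G_2\circ\Gamma_A\bigl(s_{\pi(\tau(a_2))}\circ\cdots\circ s_{\pi(\tau(a_p))}(y),-\bigr)\circ F_2.$$
Substituting $y=\tau(x)$ and using the two displayed identities collapses the left-hand side to $\Gamma_A(\tau(u'(x)),-)$ and the right-hand side to $G_2\circ\Gamma_A(\tau(v'(x)),-)\circ F_2=G_2\circ\Gamma_A(v(x),-)\circ F_2$. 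Combining this with the opening reduction closes the induction with $G:=G_1\circ G_2$ and $F:=F_2\circ F_1$.
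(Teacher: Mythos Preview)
Your proof is correct, but it follows a different route from the paper's. The paper also inducts on $p$, but it peels off the \emph{last} reflection rather than the first: setting $w:=s_{a_1}\circ\cdots\circ s_{a_{p-1}}$ and $h:=s_{\pi(a_1)}\circ\cdots\circ s_{\pi(a_{p-1})}$, the inductive hypothesis gives $\Gamma_A(w(x),-)=G\circ\Gamma_A(h(x),-)\circ F$. To handle the remaining $s_{a_p}$, the paper introduces an auxiliary LDB division algebra $B$ on the same underlying space, with laws $x\star' y:=h(x)\star y$ and $x\bullet' y:=h(x)\bullet y$; since $h$ is an isometry of $(A,q)$ fixing $\K^2$, one has $\Gamma_B(x,-)=\Gamma_A(h(x),-)$ and the attached form of $B$ is again $q$, so Lemma~\ref{prelimkeylemma1} applies to $B$ with the vector $a_p$. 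Composing the resulting identity with the inductive one finishes the argument.

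Your approach bypasses the auxiliary algebra entirely by exploiting the reflection-conjugation identity $\tau\circ s_a\circ\tau^{-1}=s_{\tau(a)}$ together with the observation that $\tau=s_{\pi(a_1)}$ respects the splitting $A\oplus\K^2$ (so that $\pi\circ\tau=\tau\circ\pi$ and the transported vectors $\tau(a_i)$ still satisfy the hypotheses of the lemma). This lets you apply the inductive hypothesis directly to the modified sequence $\tau(a_2),\dots,\tau(a_p)$. The trade-off: your argument is more self-contained and avoids checking that $B$ really is an LDB division algebra with the right attached form, while the paper's argument is a bit more conceptual in that it makes transparent why changing the ``$A$-coordinate'' by an isometry preserves the structure underlying Lemma~\ref{prelimkeylemma1}. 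Both are equally valid here.
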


\begin{proof}[Proof of Lemma \ref{prelimkeylemma2}]
We prove the result by induction on $p$, the case $p=1$ being given by Lemma \ref{prelimkeylemma1}.
Assume that $p>1$. Set $w:=s_{a_1}\circ \cdots \circ s_{a_{p-1}}$
and $h:=s_{\pi(a_1)}\circ \cdots \circ s_{\pi(a_{p-1})}$.
By induction, there are automorphisms $F$ and $G$ of $A^2$ such that
$$\forall x \in A \oplus \K^2, \quad \Gamma_A(w(x),-)=G \circ \Gamma_A(h(x),-) \circ F.$$
We endow $B:=A$ with a new structure of LDB division algebra:
for $(x,y)\in B^2$, we set
$$x \star' y:=h(x) \star y \quad \text{and} \quad x \bullet' y:=h(x) \bullet y.$$
Noting that $h$ is the identity on $\K^2$, we obtain
$$\forall x \in A \oplus \K^2, \quad \Gamma_B(x,-)=\Gamma_A(h(x),-),$$
whence
$$\forall x \in A \oplus \K^2, \quad \Gamma_A(w(x),-)=G \circ \Gamma_B(x,-) \circ F.$$
As $h$ induces an orthogonal automorphism of $(A,q)$, the quadratic form attached to $B$ is $q$.
In particular, for every anisotropic vector $x$ of $A \oplus \K^2$, we see that $s_x$ is the reflection of
$(B\oplus \K^2, \widetilde{q})$ along  $\K x$. Applying Lemma \ref{prelimkeylemma1} to the LDB division algebra $B$,
we obtain automorphisms $F'$ and $G'$ of $A^2$ such that
$$\forall x\in A \oplus \K^2, \quad
\Gamma_B(s_{a_p}(x),-)=G' \circ \Gamma_B(s_{\pi(a_p)}(x),-) \circ F'.$$
Therefore, for all $x \in A \oplus \K^2$, we conclude that
\begin{align*}
\Gamma_A(w(s_{a_p}(x)),-) & = G \circ \Gamma_B(s_{a_p}(x),-) \circ F \\
& = G \circ G' \circ \Gamma_B(s_{\pi(a_p)}(x),-) \circ F' \circ F \\
& = (G \circ G') \circ \Gamma_A \bigl((h \circ s_{\pi(a_p)})(x),-\bigr) \circ (F' \circ F).
\end{align*}
As $G \circ G'$ and $F' \circ F$ are automorphisms of $A^2$, this completes our inductive proof.
\end{proof}

Now, we can prove Proposition \ref{keylemma}:

\begin{proof}[Proof of Proposition \ref{keylemma}]
Denote by $B:=(A,\bullet,\star)$ the opposite LDB division algebra
of $(A,\star,\bullet)$.
Setting $T : (y,z) \in A^2 \mapsto (z,y)$ and
$t : x+(\lambda,\mu) \in A \oplus \K^2 \longmapsto x+(\mu,\lambda)$,
we obtain
$$\forall x \in A \oplus \K^2, \quad \Gamma_B(x,-)=T \circ \Gamma_A(t(x),-) \circ T^{-1}.$$
Note that $t$ is the reflection of $(A \oplus \K^2,\widetilde{q})$ along
$b:=(1,-1)$. Now, let $a \in A \setminus \{0\}$. Setting $d:=a+(1,0)$, we see that
$\widetilde{q}(d)=q(a)$, that $d \not\in \K^2$ and that $b$ is not $\widetilde{q}$-orthogonal to $d$.
In particular, $d$ is anisotropic and $t=s_b=s_d^{-1} \circ s_{s_d(b)} \circ s_d=s_d \circ s_{s_d(b)} \circ s_d$.
As $b$ is not $\widetilde{q}$-orthogonal to $d$, we see that $\pi(s_d(b))$ is a non-zero scalar multiple of $a$.
Thus, $s_{\pi(d)}=s_a=s_{\pi(s_d(b))}$. Applying Lemma \ref{prelimkeylemma2}, we obtain automorphisms
$G$ and $F$ of $A^2$ such that
$$\forall x \in A \oplus \K^2, \quad
\Gamma_A(t(x),-)=G \circ \Gamma_A\bigl((s_{\pi(d)} \circ s_{\pi(s_d(b))}\circ s_{\pi(d)})(x),-\bigr) \circ F.$$
Set $G_1:=T \circ G$ and $F_1:=F \circ T^{-1}$.
As $s_{\pi(d)} \circ s_{\pi(s_d(b))}\circ s_{\pi(d)}=(s_a)^3=s_a$, we obtain
\begin{equation}\label{ident1}
\forall x \in A \oplus \K^2, \quad
\Gamma_B(x,-)=G_1 \circ \Gamma_A(s_a(x),-) \circ F_1.
\end{equation}
As $s_a$ leaves $(1,0)$ and $(0,1)$ invariant, we have in particular
\begin{equation}\label{identitepart}
\begin{cases}
\Gamma_B\bigl((1,1),-\bigr) & =G_1 \circ \Gamma_A\bigl((1,1),-\bigr) \circ F_1 \\
\Gamma_B\bigl((1,0),-\bigr) & =G_1 \circ \Gamma_A\bigl((1,0),-\bigr) \circ F_1 \\
\Gamma_B\bigl((0,1),-\bigr) & =G_1 \circ \Gamma_A\bigl((0,1),-\bigr) \circ F_1.
\end{cases}
\end{equation}
As $\Gamma_B\bigl((1,1),-\bigr)=\id_{A^2}=\Gamma_A\bigl((1,1),-\bigr)$, the first identity in \eqref{identitepart} yields $G_1=F_1^{-1}$.
As $\Ker \Gamma_A\bigl((0,1),-\bigr)=A \times \{0\}=\Ker \Gamma_B\bigl((0,1),-\bigr)$
and $\Ker \Gamma_A\bigl((1,0),-\bigr)=\{0\} \times A=\Ker \Gamma_B\bigl((1,0),-\bigr)$,
the second and third identities in \eqref{identitepart} yield that $F_1$ stabilizes $A \times \{0\}$ and $\{0\} \times A$, giving rise to
automorphisms $g$ and $h$ of $A$ such that
$$\forall (y,z) \in A^2, \; F_1(y,z)=(g(y),h(z)).$$
Thus, for all $x \in A$, applying \eqref{ident1} to $x$ yields
$$\forall (y,z)\in A^2, \; (x \bullet z, x \star y)=\Bigl(g^{-1}\bigl(s_a(x) \star h(z)\bigr),h^{-1}\bigl(s_a(x) \bullet g(y)\bigr)\Bigr).$$
Extracting the first components from both sides concludes the proof.
\end{proof}

\subsection{Completing the reduction to the standard case}

Now, we are ready to prove Lemma \ref{reductionlemma}.
Let $(A,\star,\bullet)$ be an LDB division algebra with attached quadratic form $q$,
and let $e \in A$ be such that $q(e)=1$. Denote by $b_q$ the polar form of $q$.
For $x \in A$, we set $\overline{x}:=-s_e(x)$. Since $q(e)=1$, we have
$$\forall x \in A, \quad \overline{x}=-x+b_q(x,e)\,e.$$
Proposition \ref{keylemma} yields automorphisms $h$ and $g$ of $A$ such that
$$\forall (x,y)\in A^2, \quad x \bullet y=h(\overline{x} \star g(y)).$$
For $x \in A$, denote by $M(x)$ the endomorphism $y \in A \mapsto x \star y$ of $A$.
With the above identity, we deduce that
$$\forall x \in A, \quad M(x)\circ h \circ M(\overline{x}) \circ g=q(x)\id_A.$$
Setting $N(x):=M(x) \circ h$ and $f:=g^{-1}\circ h$, we deduce that
$$\forall x \in A, \quad N(x)\circ N(\overline{x})=q(x)\,f,$$
and in particular $N(e)^2=f$.
Let us prove that $N(x)$ commutes with $N(e)$ for all $x$ in $A$.
Fix $x \in A$. Polarizing the above quadratic identity in $(x,e)$ yields
$$N(x)\circ N(\overline{e})+N(e)\circ N(\overline{x})=b_q(x,e)\,f.$$
As $\overline{e}=e$, this reads
$$N(x)\circ N(e)-N(e)\circ N(x)+b_q(x,e)\,N(e)^2=b_q(x,e)\,f.$$
Since $N(e)^2=f$, we deduce, as claimed, that
$$N(x)\circ N(e)-N(e)\circ N(x)=0.$$
It ensues that
\begin{equation}\label{derivedidentity}
\forall x \in A, \; \bigl(N(e)^{-1}\circ  N(x)\bigr)\circ \bigl(N(e)^{-1}\circ N(\overline{x})\bigr)=N(e)^{-2} \circ N(x) \circ N(\overline{x})
=q(x)\,\id_A.
\end{equation}
We are ready to conclude. Defining new laws $\star'$ and $\bullet'$ on $A$ by
$$x \star' y:=N(e)^{-1}(x \star h(y)) \quad \text{and} \quad x \bullet' y:=h^{-1}(x \bullet N(e)[y]),$$
we see that $(A,\star',\bullet')$ is an LDB division algebra that is equivalent to $(A,\star,\bullet)$ and whose attached quadratic form is $q$.
Let us check that $(A,\star',\bullet')$ is $e$-standard.
First, we note that $x \star '-=N(e)^{-1}\circ N(x)$ for all $x \in A$, and in particular $e\star' -=\id_A$.
Next, identity  \eqref{derivedidentity} yields that the law $\bullet''$ defined by
$x \bullet'' y:=\overline{x} \star' y$ is a bilinear quasi-left-inversion of $\star'$, and $q$
is the quadratic form attached to the LDB division algebra $(A,\star',\bullet'')$. Thus, we have a scalar
$\lambda$ such that $\bullet''=\lambda \bullet'$, and $q=\lambda q$ since $\lambda q$ is the quadratic form attached to
$(A,\star',\lambda \bullet')$. It follows that $\lambda=1$ and $\bullet''=\bullet'$ and hence $(A,\star',\bullet')$
is $e$-standard, as claimed. This completes the proof of Lemma \ref{reductionlemma}.

\vskip 3mm
The following result was already proved in \cite{dSPLLD1} by using the fact that an invertible alternating matrix must have
an even number of columns. Here, we use the Standardization lemma to give a new proof of it:

\begin{cor}\label{dimensioncor}
Let $(A,\star,\bullet)$ be an LDB division algebra with dimension $n>1$.
Then, $n$ is even.
\end{cor}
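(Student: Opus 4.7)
My plan is to apply the Standardization Lemma so as to work with an $e$-standard LDB division algebra $(A,\star,\bullet)$ with $q(e)=1$. The corollary's conclusion concerns only $\dim_\K A$, which is preserved under equivalence and under rescaling $\bullet$, so I may first multiply $\bullet$ by $q(e)^{-1}$ for some $e$ with $q(e) \neq 0$ (such $e$ exists since $q$ is anisotropic on $A \neq 0$) in order to arrange that the attached quadratic form represents $1$, and then invoke the Standardization Lemma.

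In this setting, identity \eqref{standardidentity} yields that for every $x \in A$ the operator $M(x) := x \star -$ satisfies
$$M(x)^2 - b_q(x,e)\, M(x) + q(x)\, \id_A = 0,$$
so the minimal polynomial of $M(x)$ divides $P_x(T) := T^2 - b_q(x,e)\,T + q(x)$. The pivotal observation is the identity $P_x(c) = q(x - c e)$ for every $c \in \K$, which follows from expanding $q(x - ce)$ and using $q(e)=1$. Since $q$ is anisotropic, $P_x(c) = 0$ forces $x = c e$; hence for every $x \in A \setminus \K e$ the polynomial $P_x$ has no root in $\K$ and, being of degree $2$, is irreducible over $\K$.

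For such $x$, the operator $M(x)$ is moreover not a scalar multiple of $\id_A$, because $x \mapsto M(x)$ is injective (by regularity of $\star$) and sends $\K e$ bijectively onto $\K\,\id_A$. The minimal polynomial of $M(x)$ therefore has degree at least $2$ and divides the irreducible polynomial $P_x$, so it equals $P_x$. It follows that $\K[M(x)] \subset \calL(A)$ is a field of degree $2$ over $\K$, whose natural action turns $A$ into a vector space over this quadratic extension; hence $\dim_\K A$ is a multiple of $2$, that is, $n$ is even (and $n > 1$ guarantees such an $x$ exists). The only non-routine step is recognizing the identity $P_x(c) = q(x - ce)$, after which the anisotropy of $q$ immediately delivers irreducibility of $P_x$ and the parity of $n$ follows with no characteristic-dependent case analysis.
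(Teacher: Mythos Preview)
Your proof is correct and follows the same overall strategy as the paper: reduce via the Standardization Lemma to an $e$-standard algebra and use identity \eqref{standardidentity} to see that $M(x)=x\star-$ satisfies a degree-$2$ polynomial whose irreducibility forces $2\mid n$.

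The execution differs in one pleasant way. The paper first discards $n=2$ as trivially even, then for $n>2$ chooses $x\in A\setminus\K e$ with $b_q(x,e)=0$ so that the relevant polynomial collapses to $t^2+q(x)$, and argues that $-q(x)$ is a non-square because $\langle 1,q(x)\rangle$ is an anisotropic subform of $q$. You instead keep a general $x\in A\setminus\K e$ and observe the identity $P_x(c)=q(x-ce)$, so that anisotropy of $q$ immediately gives irreducibility of $P_x$ for every such $x$. This avoids both the case split on $n$ and the search for an orthogonal vector, and it is uniform in the characteristic. The paper's version has the minor advantage of producing the simpler polynomial $t^2+q(x)$, which foreshadows the Clifford-algebra arguments used later; your version is shorter and more self-contained for the purpose at hand.
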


\begin{proof}
It suffices to deal with the case when $n>2$.
Using Remark \ref{reductiontorepresent1} and the Standardization lemma, we see that no generality is lost in assuming that
$(A,\star,\bullet)$ is $e$-standard for some $e \in A \setminus \{0\}$. Then, as $n>2$, we can choose a vector
$x \in A \setminus \K e$ that is $q$-orthogonal to $e$. Thus, the endomorphism $f:=x \star -$ satisfies $f^2=-q(x)\id_A$, and
$\langle 1,q(x)\rangle=\langle q(e),q(x)\rangle$ is anisotropic since it is equivalent to a subform of $q$.
It follows that $-q(x)$ is a non-square in $\K$, whence the polynomial $p(t):=t^2+q(x)$ is irreducible over $\K$.
Thus, $p(t)$ is both irreducible and the minimal polynomial of $f$, whence the dimension of $A$ is a multiple of the degree of $p(t)$.
\end{proof}

\section{The dimension of a non-degenerate LDB division algebra}\label{dimensionsection}

Let $(A,\star,\bullet)$ be a non-degenerate LDB division algebra with dimension $n$
and attached quadratic form $q$. We wish to show that $n \in \{1,2,4,8\}$, thereby proving the first statement in Theorem
\ref{nondegeneratetheo}. As we already know from Corollary \ref{dimensioncor} that $n$ is even or equals $1$,
we can simply assume that $n \geq 6$, in which case we know that $n$ is even and we need to prove that $n=8$.

Using Remark \ref{reductiontorepresent1} and Lemma \ref{reductionlemma},
we can further assume that $A$ is standard with respect to one of its non-zero elements $e$.
Then, as $q$ is non-degenerate, we can find a linear subspace $V \subset E$ with codimension $2$
such that $q_V$ is non-degenerate and $V \bot \K e$.
By \eqref{standardidentity}, we have
$\forall x \in V, \; (x \star -)^2=-q(x)\id_A$, whence the linear map $x \mapsto x \star -$ extends into
a homomorphism of $\K$-algebras from the Clifford algebra
$C(-q_V)$ to $\calL(A)$.
As the dimension of $V$ is even and $-q_V$ is non-degenerate, the algebra $C(-q_V)$ is simple
(see \cite[Chapter 9]{Scharlau} Theorem 2.10 for fields of characteristic not $2$, and Corollary 4.7 for fields of characteristic $2$).
In particular, the above homomorphism is injective, whence $\dim C(-q_V) \leq \dim \calL(A)$, which leads to
$2^{n-2} \leq n^2$. Obviously, this yields $n \leq 8$.

Assume now that $n=6$. In that case, we note that the above homomorphism yields a structure of
left $C(-q_V)$-module on $A$. As $C(-q_V)$ is a finite-dimensional simple $\K$-algebra,
it is isomorphic to $\Mat_p(\L)$ for some skew field extension $\L$ of $\K$ and some positive integer $p$,
and all the minimal left $C(-q_V)$-modules have dimension $p\,d$ over $\K$, where $d:=[\L : \K]$.
In particular, $\dim A$ should be a multiple of $p\,d$. As on the other hand
$p^2d=\dim C(-q_V)$, we deduce that $p d$ is a power of $2$ and that $pd\geq \sqrt{\dim C(-q_V)}=4$.
As $p d$ divides $6$, this is absurd.
Therefore, $n=8$, which completes the proof of the first statement in Theorem \ref{nondegeneratetheo}.

\section{LDB division algebras of dimension at most $2$}\label{dim1et2section}

Our aim in this short section is to understand the structure of all LDB division algebras with dimension at most $2$.

Let $(A,\star,\bullet)$ be an LDB division algebra with dimension at most $2$ and attached quadratic form denoted by $q$.

Assume first that $(A,\star,\bullet)$ has dimension $1$.
Without loss of generality, we may assume that $A=\K$.
If $A$ is $1$-standard, it is obvious that $(A,\star,\bullet)=(\K,\cdot,\cdot)$.
Thus, Lemma \ref{reductionlemma} yields that $(A,\star,\bullet)$ is equivalent to $(\K,\cdot,\cdot)$ whenever $q$ represents $1$.
Using Remark \ref{reductiontorepresent1}, we deduce that Theorem \ref{classbyquadformtheo} holds for all $1$-dimensional LDB division algebras.

The following lemma deals with $2$-dimensional LDB division algebras:

\begin{lemma}\label{2dimlemma}
Let $(A,\star,\bullet)$ be a $2$-dimensional LDB division algebra whose attached quadratic form $q$ represents $1$.
Then, $A$ is equivalent to the quadratic LDB division algebra associated with the field extension $C_0(q)$ of $\K$.
\end{lemma}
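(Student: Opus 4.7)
My plan is to apply the Standardization lemma to arrange that $(A,\star,\bullet)$ is $e$-standard for some $e$ with $q(e)=1$, then identify $A$ isometrically with $C_0(q)$ by a canonical change of basis, and finally invoke the equivalence criterion from Remark \ref{strategyforequivalenceremark} together with a conjugation argument for cyclic $2\times 2$ matrices.

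After standardizing, I would choose any $v\in A\setminus \K e$ and set $b:=b_q(e,v)$ and $c:=q(v)$, so that $q(se+tv)=s^{2}+stb+t^{2}c$. Inside the Clifford algebra $C(q)$, the element $\zeta:=e v$ lies in $C_0(q)$ and satisfies $\zeta^{2}=b\zeta-c$ by a direct use of the Clifford relations. The polynomial $t^{2}-bt+c$ is irreducible over $\K$: any rational root $\lambda$ would make $-\lambda e+v$ a non-zero isotropic vector for the anisotropic form $q$. Hence $\L:=C_0(q)=\K[\zeta]$ is a quadratic field extension of $\K$, and a short calculation shows that its norm $N_{\L/\K}$ in the basis $(1,\zeta)$ equals $s^{2}+stb+t^{2}c$, which matches the formula for $q$. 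It follows that the linear map $f:A\to \L$ determined by $f(e)=1$ and $f(v)=\zeta$ is an isometry from $(A,q)$ onto $(\L,N_{\L/\K})$, and since it sends $e$ to $1$ it intertwines the two conjugations $\bar{\cdot}$.

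Transporting $\star$ and $\bullet$ along $f$ then yields a $1$-standard LDB division algebra $(\L,\star_1,\bullet_1)$ equivalent to $(A,\star,\bullet)$ with attached quadratic form $N_{\L/\K}$. By Remark \ref{strategyforequivalenceremark} applied with $u=\id_\L$ (which trivially commutes with conjugation), showing equivalence with the quadratic LDB division algebra $(\L,\cdot,\bar{\cdot}\,\cdot)$ reduces to finding $h\in\GL(\L)$ such that $h\circ(x\star_1-)\circ h^{-1}=L_x$ for every $x\in\L$, where $L_x$ denotes left multiplication by $x$ in the field $\L$; by linearity, this reduces to the single case $x=\zeta$. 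Now both $M:=\zeta\star_1-$ and $L_\zeta$ act on the $2$-dimensional $\K$-space $\L$ and satisfy the same quadratic equation $T^{2}-bT+c\,\id=0$ (for $M$ this is identity \eqref{standardidentity}; for $L_\zeta$ this comes directly from $\zeta^{2}=b\zeta-c$). Since $t^{2}-bt+c$ is irreducible, it is the minimal polynomial of both operators, and because its degree equals $\dim_\K\L$ both operators are cyclic with matching characteristic polynomials, hence conjugate in $\GL(\L)$. Concretely, $\{1,M(1)\}$ is a $\K$-basis of $\L$ (for otherwise $1$ would be a rational eigenvector of $M$, contradicting irreducibility), so I would define $h$ by $h(1):=1$ and $h(M(1)):=\zeta$ and verify $h\circ M=L_\zeta\circ h$ using the shared relation $T^{2}=bT-c\,\id$.

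The main obstacle is the final conjugation step: the whole argument rests on recognizing that $M$ and $L_\zeta$ are both cyclic endomorphisms, which in turn requires the degree of the irreducible minimal polynomial $t^{2}-bt+c$ to match $\dim_\K\L$. The isometric identification of $A$ with $C_0(q)$ and the passage to a $1$-standard structure are essentially bookkeeping once the Clifford-algebraic computation of the norm has been made and Lemma \ref{reductionlemma} has been invoked.
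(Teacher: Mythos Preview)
Your argument is correct. Both the paper and you begin by standardizing at a vector $e$ with $q(e)=1$, but from there the routes diverge. The paper first recognizes that the $2$-dimensional operator space $\L:=\{x\star - \mid x\in A\}\subset\calL(A)$ is a field (it contains $\id_A$ and every nonzero element is invertible), shows directly via the evaluation map $G:g\mapsto g(e)$ and its inverse $F:x\mapsto(x\star-)$ that $(A,\star)$ is isotopic to $(\L,\circ)$, and only afterwards identifies $\L$ with $C_0(q)$ by a short case split on the shape of $q$. You instead compute $C_0(q)=\K[\zeta]$ explicitly from the Clifford relations, build the isometry $A\to C_0(q)$ first, and then use a rational-canonical-form argument (both $M=\zeta\star_1-$ and $L_\zeta$ satisfy the same irreducible degree-$2$ polynomial, hence are conjugate) to match the two $1$-standard structures. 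Your route is more hands-on and characteristic-uniform, avoiding the case analysis; the paper's route is a bit slicker in that it never needs identity \eqref{standardidentity} beyond standardization, extracting the field structure directly from the invertibility of the operators. Either way the content is the same: the left-translations form a quadratic field whose norm is $q$.
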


\begin{proof}
We choose $e \in A$ such that $q(e)=1$.
Using the Standardization lemma, we see that no generality is lost in assuming that $e \star -=\id_A$.
In that situation, we see that $\L:=\{x \star - \mid x \in A\}$ is a two-dimensional subspace of $\calL(A)$
that contains $\id_A$ and in which all the non-zero operators are invertible. Choosing $f \in \L \setminus \K \id_A$, we find that
$\L=\K[f]$, whence $\L$ is a field extension of $\K$.
Then, using the isomorphisms $F : x \in A \mapsto (x \star -) \in \L$ and
$G : g \in \L \mapsto g(e)\in A$, one easily checks that
$$\forall (x,y)\in A^2, \; G (F(x) \circ G^{-1}(y))=x \star y,$$
and hence $\L$ is weakly equivalent to $A$ through a weak equivalence that maps $e$ to $1_\L$.
This yields a scalar $\lambda$ such that $(A,\star,\lambda \bullet)$ is equivalent to $\L$, with
$\lambda q(e)=N_{\L/\K}(1_\L)$ and $\lambda q \simeq N_{\L/\K}$. It follows that $\lambda=1$,
that $A$ is equivalent to $\L$ and that $N_{\L/\K}$ is equivalent to $q$.

If $q \simeq \langle 1,-a\rangle$ for some $a \in \K$,
then $C_0(q) \simeq \K[\sqrt{a}] \simeq \L$.
If $\K$ has characteristic $2$ and $q \simeq [1,a]$ for some $a \in \K$, then
$X^2+X+a$ is irreducible over $\K$ since $q$ is anisotropic, and
we see that $C_0(q) \simeq \K[t]/(t^2+t+a) \simeq \L$.
As $q$ represents $1$, this shows that $C_0(q)$ is always equivalent to $\L$.

We conclude by noting that isomorphic quadratic extensions of $\K$ obviously yield equivalent LDB division algebras.
\end{proof}

Combining Lemma \ref{2dimlemma} with Remark \ref{reductiontorepresent1} yields that every
$2$-dimensional LDB division algebra is weakly equivalent to a quadratic LDB division algebra,
and we obtain both statements in Theorem \ref{classbyquadformtheo} for $2$-dimensional LDB division algebras.

\section{LDB algebras over fields of characteristic not $2$}\label{charnot2section}

In this section, we assume that the underlying field $\K$ does not have characteristic $2$, and
we obtain Theorems \ref{nondegeneratetheo} and \ref{classbyquadformtheo} in this situation.
We will understand the structure of all $4$-dimensional LDB division algebras over $\K$, and then of all $8$-dimensional LDB division algebras over $\K$.

In each case, the basic strategy is to consider an LDB division algebra whose attached quadratic form represents $1$
and to prove that this quadratic form is a Pfister form. Then, we use the strategy outlined in Remark \ref{strategyforequivalenceremark}
to prove that two LDB division algebras are equivalent whenever their attached quadratic forms are equivalent.

\subsection{Four-dimensional LDB division algebras}

Let $(A,\star,\bullet)$ be an LDB division algebra with dimension $4$ and whose attached quadratic form $q$ represents $1$.
Our aim is to prove that $(A,\star,\bullet)$ is equivalent to any quaternionic LDB division algebra whose
attached quadratic form is equivalent to $q$.
We choose $e \in A$ such that $q(e)=1$.
By the Standardization lemma, we lose no generality in further assuming that $A$ is $e$-standard.

Our first step is the following:

\begin{claim}\label{4pfistercarnot2}
The quadratic form $q$ is a Pfister form, that is $q \simeq \langle 1,-a\rangle \otimes \langle 1,-b\rangle$ for some $(a,b)\in \K^2$.
\end{claim}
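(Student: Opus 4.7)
The plan is to reduce the claim to showing that the discriminant of the restriction $q_V := q_{|V}$ to the hyperplane $V := e^\perp$ of $(A,q)$ is a square in $\K^*$. Indeed, since $q \simeq \langle 1\rangle \perp q_V$ represents $1$, the condition $\det q_V \in (\K^*)^2$ allows one to rewrite $q_V \simeq \langle a_1,a_2,a_1 a_2\rangle$ up to squares, whence $q \simeq \langle 1,a_1,a_2,a_1 a_2\rangle$; setting $a := -a_1$ and $b := -a_2$ then yields $q \simeq \langle 1,-a\rangle \otimes \langle 1,-b\rangle$, as required.

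To establish $\det q_V \in (\K^*)^2$, I would fix a $q$-orthogonal basis $(v_1,v_2,v_3)$ of $V$ with $q(v_i)=a_i$, and denote by $L_i$ the endomorphism $v_i \star -$ of $A$. Applying the standard identity \eqref{standardidentity} at each $v_i$ (noting $b_q(v_i,e)=0$) yields $L_i^2 = -a_i \cdot \id_A$, and polarising gives that $L_1$, $L_2$, $L_3$ pairwise anticommute. A direct manipulation then shows that $\Omega := L_1 L_2 L_3$ satisfies $\Omega^2 = a_1 a_2 a_3 \cdot \id_A = \det q_V \cdot \id_A$ and commutes with each $L_i$; since $\id_A$ together with the $L_i$ span the space of endomorphisms $x \star -$ (for $x \in A$), the operator $\Omega$ commutes with $x \star -$ for every $x \in A$.

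By Lemma \ref{minipoly}, the minimal polynomial of $\Omega$ over $\K$ is irreducible. As it divides $T^2 - \det q_V$, it is either of degree $1$ --- in which case $\Omega$ is a scalar and $\det q_V$ is a square, completing the proof --- or equal to the irreducible polynomial $T^2 - \det q_V$ itself, in which case $\det q_V$ is a non-square in $\K$ and $\K[\Omega]$ is a quadratic field extension of $\K$.

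The main obstacle is ruling out this second alternative. I would attack it by setting $c := \Omega(e) \in A$ and unpacking the commutation $\Omega \circ (x \star -) = (x \star -) \circ \Omega$ evaluated at $e$: this realises $\Omega$ essentially as right-multiplication by $c$, and in particular yields the partial associativity $(x \star y) \star c = x \star (y \star c)$ for all $x,y \in A$. Combining this with $\Omega^2 = \det q_V \cdot \id_A$ and with the standard identity applied to $c$, and then exploiting the Clifford anticommutation of $L_1$, $L_2$, $L_3$, the goal is to force a non-trivial product $L_i L_j$ (with $i \ne j$) to vanish, contradicting the regularity of $\star$.
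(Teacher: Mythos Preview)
Your setup is clean and correct: the volume element $\Omega = L_1 L_2 L_3$ does commute with every $x\star -$, and Lemma~\ref{minipoly} legitimately forces its minimal polynomial to be irreducible. This part is essentially the concrete, hands-on version of the paper's own argument, which phrases the same computation as ``$\Phi$ maps the center of $C(-q_V)$ to scalars'' in the proof of the \emph{next} claim.

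The gap is entirely in the second alternative. Two concrete problems:
\begin{itemize}
\item The ``partial associativity'' $(x\star y)\star c = x\star(y\star c)$ does \emph{not} follow from the commutation you state. Evaluating $\Omega\circ(x\star-)=(x\star-)\circ\Omega$ at $e$ gives only $\Omega(x\star e)=x\star c$; since in an $e$-standard algebra $e$ is merely a \emph{left} unit, you cannot conclude $\Omega(z)=z\star c$ for arbitrary $z$, and hence cannot bootstrap to the associativity relation.
\item The stated endgame, forcing some $L_iL_j=0$, is impossible: each $L_i$ is invertible (its square is the nonzero scalar $-a_i$), so every product $L_iL_j$ is invertible as well. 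No amount of Clifford anticommutation will make it vanish.
\end{itemize}

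What is actually needed in this case is more than formal manipulation. If $d:=a_1a_2a_3$ is a non-square and $\L:=\K[\Omega]\simeq\K(\sqrt{d})$, then all the $x\star-$ become $\L$-linear and one obtains an isomorphism $C(-q_V)\overset{\simeq}{\to}\Mat_2(\L)$; in particular the quaternion algebra $C(-q_V)$ splits over $\L$, which forces $q_\L$ to be hyperbolic. The contradiction with $d$ being a non-square then comes from a descent statement for anisotropic forms that become hyperbolic over a quadratic extension (the paper cites \cite[Chapter~2, Remark~5.11]{Scharlau}): such a form must factor as $\langle 1,-d\rangle\otimes\psi$, and hence has square discriminant. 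This step is genuinely arithmetic and cannot be replaced by the operator-theoretic trick you sketch.
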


\begin{proof}
As $q$ represents $1$, it suffices to prove that the discriminant of $q$ equals $1$.
Assume on the contrary that this is not the case.

Denote by $V$ the orthogonal complement of $\K e$ in $(A,q)$, so that $\overline{x}=-x$ for all $x \in V$.
Thus, $(x \star -)^2=-q(x)\id_A$ for all $x \in V$.
The map $x \in V \mapsto (x \star -) \in \calL(A)$ can then be extended into a homomorphism of $\K$-algebras
$\Phi : C(-q_V) \longrightarrow \calL(A)$.
Choosing $(a,b,c)\in \K^3$ such that $q_V \simeq \langle a,b,c\rangle$, we know that
$C(-q_V) \simeq C\langle-ab,-ac\rangle_\K \otimes_\K \L$ where $\L:=\K[t]/(t^2-abc)$.
On the other hand, $q \simeq \langle 1,a,b,c\rangle$.
Thus, $abc$ is not a square in $\K$, and hence $\L$ is a quadratic extension of $\K$.
Thus, $C(-q_V) \simeq C\langle-ab,-ac\rangle_\L$ and $C(-q_V)$ is a simple $\K$-algebra with dimension $8$.
Using the above homomorphism of $\K$-algebras, we obtain a structure of left $C(-q_V)$-module on $A$.
As $A$ has dimension $4$ over $\K$, the algebra $C\langle-ab,-ac\rangle_\L$ is not a skew field, which yields that
$\langle 1,ab,ac,bc\rangle_\L \simeq \langle 1,-(-ab),-(-ac),(-ab)(-ac)\rangle_\L$ is hyperbolic. As $abc$ is a square in $\L$, it follows that
$\langle 1,ab(abc),ac(abc),bc(abc)\rangle_\L \simeq \langle 1,c,b,a\rangle_\L$ is hyperbolic.
However, as $\L=\K[t]/(t^2-abc)$ and $q$ is anisotropic, this would yield scalars $a'$ and $b'$ in $\K$ such that
$q\simeq \langle 1,c,b,a\rangle \simeq \langle 1,-abc\rangle \otimes \langle a',b'\rangle$
(by Remark 5.11 of \cite[Chapter 2]{Scharlau})
and one would conclude that the discriminant of $q$ equals $1$. This contradicts our initial assumption, completing the proof.
\end{proof}

Now, we have found non-zero scalars $a$ and $b$ such that $q \simeq \langle 1,-a\rangle \otimes \langle 1,-b\rangle$.
On the other hand, the quaternionic LDB division algebra associated with the quaternion algebra $C\langle a,b\rangle$
has its attached quadratic form equivalent to $\langle 1,-a\rangle \otimes \langle 1,-b\rangle$.
In order to conclude that this quaternionic LDB division algebra is equivalent to $(A,\star,\bullet)$, it suffices to prove
that every LDB division algebra whose attached quadratic form is equivalent to $q$ is equivalent to $(A,\star,\bullet)$ itself.
Using the strategy outlined in Remark \ref{strategyforequivalenceremark}, we find that it suffices to prove the following result:

\begin{claim}
Let $(B,\star',\bullet')$ be an $e$-standard LDB division algebra (with $B=A$)
with attached quadratic form $q$. Then, there are automorphisms $h$ and $u$ of $A$ such that $u$ commutes with $x \mapsto \overline{x}$
and $\forall x \in A, \; h \circ (u(x) \star -) \circ h^{-1}=x \star' -$.
\end{claim}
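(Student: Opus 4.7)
The plan is to recast the data of an $e$-standard LDB structure on $(A,q)$ as an algebra homomorphism out of a Clifford algebra, and then to match two such homomorphisms by Skolem--Noether. Set $V := (\K e)^\perp$ and $M(x) := x \star -$, $M'(x) := x \star' -$. From \eqref{standardidentity}, using $b_q(v,e)=0$ for $v \in V$, we get $M(v)^2 = -q(v)\,\id_A = M'(v)^2$, so by the universal property of the Clifford algebra, the linear maps $v \mapsto M(v)$ and $v \mapsto M'(v)$ extend to $\K$-algebra homomorphisms $\Phi,\Phi' : C(-q_V) \to \calL(A)$. Since both sides of the desired identity $h \circ (u(x) \star -) \circ h^{-1} = x \star' -$ are linear in $x$ and since $M(e)=\id_A=M'(e)$, it suffices to produce $h \in \GL(A)$ and an isometry $u|_V$ of $(V,q_V)$ (extended to $u \in \GL(A)$ via $u(e)=e$, which guarantees that $u$ commutes with $x \mapsto \overline{x}$) such that $\Phi' = \operatorname{Ad}(h) \circ \Phi \circ u_\ast$, where $u_\ast$ is the algebra automorphism of $C(-q_V)$ induced by $u|_V$.

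Next, I analyze $C(-q_V)$. By Claim~\ref{4pfistercarnot2}, we may write $q \simeq \langle 1,-a,-b,ab\rangle$, so $q_V \simeq \langle -a,-b,ab\rangle$ has trivial discriminant. A standard structural result then yields that $C(-q_V)$ has center isomorphic to $\K \times \K$ (generated by the odd central element $\omega := e_1 e_2 e_3$, whose square is a square in $\K$) and decomposes as a product $Q_+ \times Q_-$ of two central simple $\K$-algebras of dimension $4$, namely two quaternion algebras. Each projection $Q_+ \times Q_- \to Q_\pm$ restricts to a nonzero homomorphism from the simple subalgebra $C_0(-q_V)$, hence is an isomorphism by dimension count; in particular $Q_+$ and $Q_-$ are both isomorphic to $C_0(-q_V)$, which a direct computation identifies with the division quaternion algebra $Q$ whose reduced norm is $q$.

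Since $Q$ is a division algebra, the minimal $\K$-dimension of a faithful $Q$-module is $4$, so any faithful $(Q_+ \times Q_-)$-module has $\K$-dimension at least $8$. As $\dim A = 4$ and $\Phi \neq 0$, the kernel of $\Phi$ must coincide with exactly one of the two simple factors $Q_\pm$, and likewise for $\Phi'$. If $\ker \Phi = \ker \Phi'$, both $\Phi$ and $\Phi'$ descend to injective algebra homomorphisms of the same quaternion quotient into the central simple algebra $\calL(A) \simeq \Mat_4(\K)$, and Skolem--Noether produces $h \in \GL(A)$ with $\Phi'=\operatorname{Ad}(h)\circ \Phi$; I then take $u = \id_A$. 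Otherwise, I use the involutive automorphism $\sigma$ of $C(-q_V)$ extending $v \mapsto -v$ on $V$: it sends $\omega$ to $-\omega$, hence swaps the two central idempotents and exchanges $Q_+$ and $Q_-$. Setting $u|_V := -\id_V$ (an isometry of $q_V$; the corresponding $u$ commutes with conjugation since both $u$ and $x \mapsto \overline{x}$ are diagonal in $A = \K e \oplus V$), one has $u_\ast = \sigma$ and therefore $\ker(\Phi \circ u_\ast) = \sigma(\ker \Phi) = \ker \Phi'$, so Skolem--Noether again yields the required $h$.

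The crux is the semisimple decomposition of $C(-q_V)$ in the Pfister case, together with the observation that $A$ is too small to be a faithful $C(-q_V)$-module, which forces each of $\Phi$ and $\Phi'$ to factor through a single quaternion quotient. Once these structural facts are available, the problem reduces to Skolem--Noether on $\Mat_4(\K)$; the only delicate point is the possible mismatch between the quaternion quotients selected by $\Phi$ and by $\Phi'$, which is precisely what the parity twist $u|_V = -\id_V$ resolves.
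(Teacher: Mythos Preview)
Your proof is correct and follows the same overall strategy as the paper's: extend $x\mapsto x\star -$ and $x\mapsto x\star' -$ to algebra maps out of $C(-q_V)$, exploit the fact that this Clifford algebra splits as a product of two copies of the division quaternion algebra with norm $q$, and conjugate via Skolem--Noether once both maps are seen to land in the same quaternion quotient.

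The execution differs in a few minor but noteworthy ways. The paper passes explicitly to the simple subalgebra $C(-q_P)$ for a $2$-dimensional $P\subset V$ and encodes the full map $x\mapsto x\star -$ through an isomorphism $v:A\to C(-q_P)$; you instead argue directly with the semisimple decomposition $C(-q_V)\simeq Q_+\times Q_-$ and the quotient maps, which is a little cleaner. To see that $\Phi$ kills one factor, the paper invokes Lemma~\ref{minipoly} (central idempotents map to $0$ or $\id_A$), whereas you use the module-dimension observation that a faithful $(Q_+\times Q_-)$-module needs $\K$-dimension at least $8>4$; both arguments are standard and equally short. Finally, to swap the two quaternion quotients the paper uses the reflection $s$ along $\K e_3$, while you use $-\id_V$ (the grade involution); either isometry of $V$ sends $e_1e_2e_3$ to its negative and hence exchanges the central idempotents, and both commute with $x\mapsto\overline{x}$ since they fix $e$ and preserve $V$. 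In short, the two proofs are structurally the same; yours is somewhat more invariant in flavor, the paper's somewhat more explicit.
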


\begin{proof}
Denote by $V$ the orthogonal complement of $\K e$ in $(A,q)$, so that $-q_V$ has discriminant $1$.
As in the proof of Claim \ref{4pfistercarnot2}, we find that the linear map $x \in V \mapsto (x \star -) \in \calL(A)$
extends into a homomorphism of $\K$-algebras $\Phi : C(-q_V) \rightarrow \calL(A)$.
However, as $-q_V$ has discriminant $1$, we obtain $C(-q_V) \simeq C_0(-q_V) \times C_0(-q_V)$, whose center $\calZ$ is isomorphic to
$\K \times \K$. Let $p$ be an idempotent of $\calZ$.
Then, $\Phi(p)$ is an idempotent of $\calL(A)$ that commutes with $x \star -$ for all $x \in A$
(as $e \star -=\id_A$), and hence Lemma \ref{minipoly} yields $\Phi(p)=\id_A$ or $\Phi(p)=0$.
Varying $p$ and taking linear combinations, it follows that $\Phi$
maps every element of the center of $C(-q_V)$ to a scalar multiple of $\id_A$.

Next, by Witt's cancellation rule, we can choose an orthogonal basis $(e_1,e_2,e_3)$ of $V$ such that
$q(e_1)=-a$, $q(e_2)=-b$ and $q(e_3)=ab$. Classically, $e_1e_2e_3$ belongs to the center of $C(-q_V)$, and
$(e_1e_2e_3)^2=q(e_1)q(e_2)q(e_3)=(ab)^2$. Thus, $\Phi(e_1e_2e_3)=\pm ab \,\id_A$,
which yields an $i \in \{0,1\}$ such that
$$(e_3 \star -)=(-1)^i\,(e_1\star -)\circ (e_2 \star -).$$

Next, we set $P:=\Vect(e_1,e_2)$, and we extend the linear map $x \in P \mapsto (x \star -) \in \calL(A)$
into a homomorphism $\Psi : C(-q_P) \rightarrow \calL(A)$ of $\K$-algebras. This homomorphism is injective because $C(-q_P)$
is a simple algebra. Denote by $v$ the vector space isomorphism from $A$ to $C(-q_P)$ that maps $e_1$ to $e_1$, $e_2$ to $e_2$,
$e_3$ to $e_1e_2$ and $e$ to $1$, and denote by $s$ the reflection of $(A,q)$ along $\K e_3$.
Then, as $e \star -=\id_A$ and $e_3 \star -=(-1)^i \Psi(e_1e_2)$, we obtain
$$\forall x \in A, \quad x \star -=\Psi(v(s^i(x))).$$

Now, we are close to the conclusion. With the same line of reasoning applied to $(A,\star',\bullet')$, we find
a homomorphism $\Psi' : C(-q_P) \rightarrow \calL(A)$ of $\K$-algebras together with a $j \in \{0,1\}$ such that
$$\forall x \in A, \quad x \star' -=\Psi'(v(s^j(x))).$$
As $C(-q_P)$ is a simple algebra and $\calL(A)$ is a central simple $\K$-algebra, the Skolem-Noether theorem
\cite[Chapter 8, Theorem 4.2]{Scharlau}\footnote{Instead of the Skolem-Noether theorem,
one could simply note that $\Psi$ and $\Psi'$ define two structures of left $C(-q_P)$-vector space on $A$,
both with dimension $1$ over $C(-q_P)$, and one could conclude by choosing an isomorphism $h$ from the first structure to the second one.}
yields an automorphism $h$ of $A$ such that $\forall y \in C(-q_P), \; \Psi'(y)=h \circ \Psi(y) \circ h^{-1}$.
Thus, for all $x \in A$, we obtain
$$x \star' -=h \circ (s^{j-i}(x) \star -) \circ h^{-1}.$$
The endomorphism $s$ commutes with $x \mapsto \overline{x}$ because $e$ is $q$-orthogonal to $e_3$.
Thus, $s^{j-i}$ commutes with $x \mapsto \overline{x}$ and the claimed result is proved.
\end{proof}

From there, the arguments from Remark \ref{strategyforequivalenceremark} show that
an LDB division algebra is equivalent to $(A,\star,\bullet)$ whenever its attached quadratic form is equivalent to $q$.
In particular, $(A,\star,\bullet)$ is equivalent to a quaternionic LDB division algebra.
More precisely, if $q \simeq \langle 1,-a\rangle \otimes \langle 1,-b\rangle$, then $(A,\star,\bullet)$ is equivalent to $C\langle a,b\rangle$.

\subsection{Eight-dimensional LDB division algebras}\label{dim8charnot2}

Let $(A,\star,\bullet)$ be an LDB division algebra with dimension $8$ and whose attached quadratic form $q$ represents $1$.
We choose $e \in A$ such that $q(e)=1$.
Our aim is to prove that $q$ is a Pfister form and that every LDB division algebra with attached quadratic form $q$
is equivalent to $(A,\star,\bullet)$. By the Standardization lemma, it suffices to do this when $A$ is $e$-standard
and, in what follows, we shall assume that this condition holds.

As $q$ is anisotropic and $\K$ has characteristic not $2$, the subspace $V:=\{e\}^\bot$
satisfies $A=V \oplus \K e$.

We start with a preliminary result on the quadratic form $q$:

\begin{claim}
The quadratic form $q$ has discriminant $1$.
\end{claim}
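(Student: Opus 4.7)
The plan is to mimic the strategy used in Claim \ref{4pfistercarnot2} for the four-dimensional case, but to replace the dichotomy argument on quaternion algebras by a dimension count for simple modules over the appropriate Clifford algebra. Concretely, I would let $V := \{e\}^\bot$, which has dimension $7$ and on which $q_V$ is non-degenerate, and I would observe that identity \eqref{standardidentity} yields $(x \star -)^2 = -q(x)\,\id_A$ for every $x \in V$. Thus $x \in V \mapsto (x \star -)\in \calL(A)$ extends to a homomorphism of $\K$-algebras $\Phi : C(-q_V) \to \calL(A)$, and this is where the Clifford-algebraic structure of seven-dimensional non-degenerate quadratic forms enters.

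Next, I would argue by contradiction: assume $\disc(q) \neq 1$. Since $q(e)=1$ one has $\disc(q_V)=\disc(q)$, and for a $q$-orthogonal basis $(e_1,\dots,e_7)$ of $V$ the element $\omega := e_1\cdots e_7 \in C(-q_V)$ lies in the center and satisfies $\omega^2 = \disc(q)$ up to squares (this is the standard formula $\omega^2=(-1)^{n(n-1)/2}\prod a_i$ applied with $n=7$ and $a_i=-q(e_i)$). Consequently $\L:=\K[\omega]$ is a quadratic field extension of $\K$, and $C(-q_V)$ is a central simple $\L$-algebra of $\L$-dimension $2^{6}=64$ (by the standard structure of Clifford algebras of odd-dimensional non-degenerate quadratic forms, see e.g.\ \cite[Chapter 9]{Scharlau}). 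In particular $C(-q_V)$ is simple, so the nonzero map $\Phi$ is injective and turns $A$ into a faithful left $C(-q_V)$-module. Via the inclusion of the center, $A$ inherits a structure of $\L$-vector space of dimension $\dim_\K A/[\L:\K]=4$.

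The contradiction is obtained by a Wedderburn count: writing $C(-q_V) \simeq \Mat_p(D)$ for some skew field $D$ with center $\L$, the relation $p^2[D:\L]=64$ forces $p \leq 8$, and the minimal $\L$-dimension of a nonzero left $C(-q_V)$-module equals $p\,[D:\L]=64/p \geq 8$; since the $\L$-dimension of a nonzero module is a multiple of that minimum, this rules out $\dim_\L A = 4$. Hence $\disc(q)=1$, as claimed.

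The only mildly delicate point is the computation identifying $\omega^2$ with $\disc(q)$ modulo squares: one has to keep track of the sign $(-1)^{n(n-1)/2}$ coming from reordering, combined with the seven sign changes due to $Q=-q_V$, which conveniently cancel to give the clean identity $\omega^2 \equiv \disc(q) \pmod{(\K^*)^2}$. Everything else is a routine application of the standard classification of Clifford algebras of odd-dimensional non-degenerate forms and of Wedderburn's theorem, and is very close in spirit to the argument already used in the four-dimensional case.
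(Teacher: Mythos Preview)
Your argument is correct and follows essentially the same route as the paper's: both extend $x\in V\mapsto (x\star -)$ to a homomorphism $\Phi:C(-q_V)\to\calL(A)$ and exploit that $C(-q_V)$ is simple precisely when the discriminant is nontrivial. The paper's version is shorter, though: once $C(-q_V)$ is simple and $\Phi$ is therefore injective, the inequality $\dim_\K C(-q_V)=2^7>8^2=\dim_\K\calL(A)$ already yields the contradiction, so your Wedderburn module count is superfluous.
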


\begin{proof}
By \eqref{standardidentity}, we have
 $\forall x \in V, \; (x \star -)^2=-q(x)\,\id_A$ and hence the linear map $x \in V \mapsto (x \star -)  \in \calL(A)$
can be extended into a homomorphism of $\K$-algebras $\Phi : C(-q_V) \rightarrow \calL(A)$.
This homomorphism cannot be injective since $\dim \calL(A)=8^2<2^7=\dim C(-q_V)$. Therefore, $C(-q_V)$ is not simple,
which entails that the discriminant of $-q_V$ equals $1$. It follows that the discriminant of $q$ equals $1$, as claimed.
\end{proof}

Next, we can choose an orthogonal basis $(e_i)_{1 \leq i \leq 7}$ of $V$ in which
$q(e_7)=\underset{i=1}{\overset{6}{\prod}} q(e_i)$. We set $W:=\Vect(e_i)_{1 \leq i \leq 6}$.
The restriction of $\Phi$ to $C(-q_W)$ must be injective since $C(-q_W)$ is simple. As $\dim C(-q_W)=2^6=\dim \calL(A)$,
we deduce that $\Phi$ is surjective. In particular, since
$z:=e_1e_2e_3e_4e_5e_6e_7$ lies in the center of $C(-q_V)$, the element $\Phi(z)$ is a scalar multiple of the identity,
whence we have a scalar $\lambda$ such that
$\Phi(e_7)=\lambda \underset{i=1}{\overset{6}{\prod}} \Phi(e_i)$.
Using $\Phi(e_i)^2=-q(e_i)\,\id_A$ for all $i \in \lcro 1,6\rcro$ together with the fact that $\Phi(e_1),\dots,\Phi(e_6)$
are pairwise skew-commuting operators,
we deduce that $\lambda^2 \underset{i=1}{\overset{6}{\prod}} q(e_i)=q(e_7)$, and we conclude that $\lambda=\pm 1$.
Using this, we shall prove:

\begin{claim}
An LDB division algebra is equivalent to $(A,\star,\bullet)$ whenever its attached quadratic form equals $q$.
\end{claim}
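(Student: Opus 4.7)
The plan is to mimic the strategy used in the four-dimensional case. Following Remark \ref{strategyforequivalenceremark} and the Standardization lemma, I reduce to the case where the second LDB division algebra is $(A,\star',\bullet')$ with the same underlying vector space, the same vector $e$, and the same attached quadratic form $q$, and where $(A,\star',\bullet')$ is $e$-standard. It suffices then to exhibit automorphisms $h$ and $u$ of $A$ such that $u$ commutes with $x \mapsto \overline{x}$ and $h\circ(u(x)\star -)\circ h^{-1}=x\star' -$ for all $x\in A$.

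Next, I apply the entire construction that produced $\Phi$ and $\lambda$ to the law $\star'$ as well: identity \eqref{standardidentity} yields a homomorphism of $\K$-algebras $\Phi':C(-q_V)\rightarrow \calL(A)$, and the same surjectivity/dimension argument on $C(-q_W)$ (which is central simple of dimension $64=\dim\calL(A)$) gives that $\Phi'|_{C(-q_W)}$ is an isomorphism and that there exists $\lambda'\in\{-1,+1\}$ with $\Phi'(e_7)=\lambda'\prod_{i=1}^{6}\Phi'(e_i)$. Applying the Skolem--Noether theorem (or, equivalently, picking an isomorphism between the two one-dimensional left $C(-q_W)$-module structures induced on $A$) to the two isomorphisms $\Phi|_{C(-q_W)}$ and $\Phi'|_{C(-q_W)}$ into the central simple $\K$-algebra $\calL(A)$, I obtain an automorphism $h$ of $A$ such that
$$\forall y\in C(-q_W),\quad \Phi'(y)=h\circ\Phi(y)\circ h^{-1}.$$
In particular, $h\circ(e_i\star -)\circ h^{-1}=e_i\star' -$ for every $i\in\lcro 1,6\rcro$, and, because $e\star -=\id_A=e\star' -$, the same identity trivially holds for $x=e$.

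It remains to handle the seventh basis vector, where the two signs $\lambda$ and $\lambda'$ may disagree. I set $u:=\id_A$ if $\lambda=\lambda'$ and $u:=s_{e_7}$ (the reflection of $(A,q)$ along $\K e_7$) if $\lambda\neq\lambda'$. In both cases $u$ fixes $e_1,\dots,e_6$ and $e$, and multiplies $e_7$ by $\pm 1$ so that $\lambda'=(\pm 1)\,\lambda$ with the sign matching the action of $u$ on $e_7$. Since $e$ is $q$-orthogonal to $e_7$, the reflection $s_{e_7}$ commutes with $s_e$, hence $u$ commutes with $x\mapsto\overline{x}=-s_e(x)$. Moreover,
$$h\circ(u(e_7)\star -)\circ h^{-1}=\pm\, h\circ\Phi(e_7)\circ h^{-1}=\pm\frac{1}{\lambda}\,h\circ\prod_{i=1}^{6}\Phi(e_i)\circ h^{-1}=\pm\frac{1}{\lambda}\prod_{i=1}^{6}\Phi'(e_i)=\pm\frac{\lambda'}{\lambda}\,\Phi'(e_7),$$
which equals $\Phi'(e_7)=e_7\star'-$ by the choice of sign. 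Since $(e,e_1,\dots,e_7)$ is a basis of $A$ and both sides of the desired identity are $\K$-linear in $x$, the identity $h\circ(u(x)\star-)\circ h^{-1}=x\star'-$ holds for all $x\in A$, and the argument outlined in Remark \ref{strategyforequivalenceremark} concludes.

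The only potentially delicate point is the sign matching for $e_7$, which is why a case split on $\lambda=\lambda'$ versus $\lambda\neq\lambda'$ is needed and why a reflection along $\K e_7$ (as opposed to some other adjustment) is the right $u$: it is precisely the isometry that leaves $e_1,\dots,e_6$ and $e$ invariant while toggling the orientation of the central element $e_1e_2\cdots e_7\in C(-q_V)$, and it automatically commutes with conjugation thanks to $e\perp e_7$.
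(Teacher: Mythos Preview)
Your proof is correct and follows essentially the same approach as the paper: reduce to two $e$-standard structures, extend to Clifford homomorphisms on $C(-q_W)$, apply Skolem--Noether to conjugate one into the other, and absorb the possible sign discrepancy at $e_7$ by the reflection $s_{e_7}$ (the paper writes this as $u=s^{j-i}$ with $i,j\in\{0,1\}$ rather than doing an explicit case split, but the content is identical).
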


\begin{proof}
As explained in Remark \ref{strategyforequivalenceremark}, it suffices to consider an $e$-standard LDB division algebra $(A,\star',\bullet')$
with $q$ as its attached quadratic form, and to exhibit automorphisms $h$ and $u$ of $A$ such that
$$\forall x\in A, \quad x \star' -=h \circ (u(x) \star -) \circ h^{-1}$$
and $u$ commutes with $x \mapsto \overline{x}$.
Using the above considerations, we see that the linear maps $x \in W \mapsto (x \star -) \in \calL(A)$ and
$x \in W \mapsto (x \star' -) \in \calL(A)$ extend, respectively, into homomorphisms of $\K$-algebras
$\Psi : C(-q_W) \rightarrow \calL(A)$ and $\Psi' : C(-q_W) \rightarrow \calL(A)$.
Moreover, we have indexes $i$ and $j$ in $\{0,1\}$ such that
$$e_7 \star -=(-1)^i \underset{k=1}{\overset{6}{\prod}} \Psi(e_k) \quad \text{and} \quad
e_7 \star' -=(-1)^j \underset{k=1}{\overset{6}{\prod}} \Psi'(e_k).$$
Denote by $v$ the linear map from $A$ to $C(-q_W)$ that maps $e_k$ to itself for all $k \in \lcro 1,6\rcro$,
that maps $e$ to $1$ and that maps $e_7$ to $e_1e_2e_3e_4e_5e_6$. Denote finally by $s$ the reflection of $(A,q)$
along $\K e_7$. Then, we obtain
$$\forall x \in A, \quad x \star -=\Psi(v(s^i(x))) \quad \text{and} \quad
x \star' -=\Psi'(v(s^j(x))).$$
Since $C(-q_W)$ is a simple $\K$-algebra and $\calL(A)$ is a central simple $\K$-algebra, the Skolem-Noether theorem
yields an automorphism $h$ of $A$ such that $\forall y \in C(-q_W), \; \Psi'(y)=h \circ \Psi(y) \circ h^{-1}$.
Therefore,
$$\forall x \in A, \quad x \star' -=h \circ (s^{j-i}(x) \star -) \circ h^{-1},$$
which is the desired conclusion since $s$ commutes with $x \mapsto \overline{x}$ (because $e$ is $q$-orthogonal to $e_7$).
\end{proof}

Now, coming back to the structure of $(A,\star,\bullet)$, we aim at proving that $q$ is a Pfister form.

\begin{claim}\label{dim8subpfisterclaim}
One of the $4$-dimensional subforms of $q$ is a Pfister form.
\end{claim}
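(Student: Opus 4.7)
The plan is to exploit the isomorphism $\Psi:C(-q_W)\to\mathcal{L}(A)$ to extract a $4$-dimensional quaternion subalgebra of $\mathcal{L}(A)$ and to consider its orbit on the left unit $e$; this orbit will be a $4$-dimensional subspace $B\subset A$ on which $q$ restricts to a $2$-Pfister form.

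Concretely, I would fix two orthogonal vectors $u_1,u_2\in W$ and set $Q:=\Psi\bigl(C(-q_{\Vect(u_1,u_2)})\bigr)\subset\mathcal{L}(A)$. Then $Q$ is a quaternion $\K$-subalgebra of $\mathcal{L}(A)$ of dimension $4$, isomorphic to the quaternion algebra $(-q(u_1),-q(u_2))_\K$. The candidate subspace is
\[
B:=Q\cdot e=\Vect\bigl(e,\,u_1\star e,\,u_2\star e,\,u_1\star(u_2\star e)\bigr)\subset A.
\]
I would choose $(u_1,u_2)$ so that $Q$ is a division algebra, that is, so that the $2$-Pfister $\langle\langle -q(u_1),-q(u_2)\rangle\rangle$ is anisotropic; if no such choice existed, then by varying the pair and combining with the Clifford isomorphisms obtained from other $6$-dimensional subspaces of $V$, one could force $q$ itself to represent $0$, contradicting its anisotropy. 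With $Q$ a division algebra, the evaluation $c\mapsto\Psi(c)e$ is injective, so $\dim_\K B=4$.

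The essential step is then to establish the norm-preservation identity $q(\Psi(c)e)=\mathrm{Nrd}_Q(c)$ for every $c\in Q$. This should follow by combining the LDB identity $L(y)\circ L(\overline y)=q(y)\,\id_A$ (applied to $y=\Psi(c)e$) with the quaternion identity $\Psi(c)\circ\Psi(\overline c)=\mathrm{Nrd}_Q(c)\,\id_A$, after identifying the LDB conjugation $\overline{\,\cdot\,}$ on $B$ with the canonical involution $c\mapsto\overline c$ of $Q$ through the orbit map $c\mapsto\Psi(c)e$. Once the identity is in hand, $q|_B$ is precisely the $2$-Pfister form $\langle\langle -q(u_1),-q(u_2)\rangle\rangle$, which is the desired $4$-dimensional Pfister subform of $q$.

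The main obstacle is this final identification of the two involutions through the orbit map, since in general $e$ is only a left unit for $\star$ and not a two-sided unit. To overcome this, I would leverage the standard identity \eqref{standardidentity} together with the orthogonality $u_i\perp_q e$ (so that $\overline{u_i}=-u_i$ and $b_q(u_i,e)=0$), which constrains the operators $L(u_i)$ and $L(\Psi(c)e)$ enough to match the two conjugations on the four generators of $B$ and thereby on all of $B$ by $\K$-linearity.
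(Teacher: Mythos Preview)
Your proposal has a genuine gap at the ``essential step.'' To pass from the LDB identity $L(y)\circ L(\overline{y})=q(y)\,\id_A$ (with $y=\Psi(c)e$) and the quaternion identity $\Psi(c)\circ\Psi(\overline{c})=\mathrm{Nrd}_Q(c)\,\id_A$ to the conclusion $q(\Psi(c)e)=\mathrm{Nrd}_Q(c)$, you implicitly need $L(\Psi(c)e)=\Psi(c)$, i.e.\ $(\Psi(c)e)\star-=\Psi(c)$. For $c=u_1$ this reads $(u_1\star e)\star-=u_1\star-$, hence $u_1\star e=u_1$; in other words you need $e$ to be a \emph{right} unit on the relevant vectors. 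You yourself flag that $e$ is only a left unit, but then do not address this particular obstruction---you only discuss matching the two conjugations, which is a separate issue. Even that identification is unjustified: for $\overline{u_1\star e}=\Psi(\overline{u_1})e=-u_1\star e$ you would need $b_q(u_1\star e,e)=0$, and the $e$-standard identities give you no handle on $q$ or $b_q$ evaluated at $u_1\star e$. The LDB axioms control the operators $x\star-$, not the values $q(x\star e)$; so there is no route from the available identities to your norm-preservation formula. In short, the subspace $B=Q\cdot e$ carries no LDB-visible structure allowing you to compute $q|_B$.

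The paper proceeds quite differently. It also fixes the quaternion subalgebra $\calH=C(-q_P)$ and the $\calH$-module structure on $A$, but instead of looking at the orbit $\calH e$, it takes its \emph{preimage} under the right-multiplication map $x\mapsto x\star e$, namely $E:=\{x\in A:\,x\star e\in\calH e\}$. The point is that for $x\in E$ one can show (using the semi-$\calH$-linearity of the operators $y\star-$ for $y$ orthogonal to $P$) that $x\star-$ stabilises $\calH e$; writing $\kappa(x)$ for the induced endomorphism of $\calH e$, the LDB identity restricts to $\kappa(x)\circ\kappa(\overline{x})=q(x)\,\id_{\calH e}$. Transporting along any linear isomorphism $\calL(\calH e)\simeq\calL(E)$ then makes $(E,q_E)$ the underlying quadratic space of a $4$-dimensional LDB division algebra. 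Now the already-proved structure theorem in dimension $4$ forces $q_E$ to be a Pfister form. The crucial difference from your approach is that the paper never attempts to evaluate $q$ on $\calH e$; it instead manufactures a new $4$-dimensional LDB structure whose attached form is $q|_E$ by construction, and lets the lower-dimensional result do the work.
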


\begin{proof}
We consider the $2$-dimensional space $P:=\Vect(e_1,e_2)$.
Since $\forall x \in P, \; (x \star -)^2=-q(x)\,\id_A$, the linear map $x \in P \mapsto (x \star -) \in \calL(A)$
can be extended into a homomorphism $\varphi : C(-q_P) \rightarrow \calL(A)$ of $\K$-algebras. The quadratic form
$\langle 1\rangle \bot q_P$ is anisotropic since it is equivalent to a subform of $q$, whence the quaternion algebra
$\calH:=C(-q_P)$ is a skew field. Using $\varphi$, we naturally endow $A$ with a structure of left vector space over $\calH$.
Now, set $U:=\Vect(e_i)_{3 \leq i \leq 7}$. By polarizing the quadratic identity
$$\forall x \in V, \quad (x \star -)^2=-q(x)\,\id_A,$$
we obtain that the operator $x \star -$ skew-commutes with both $e_1 \star -$ and $e_2 \star -$ for all $x \in U$.
Denoting by $\gamma$ the involution of $C(-q_P)$ associated with its $\Z/2$-graduation, we deduce that
the map $x \star -$ is semi-$\calH$-linear with associated field automorphism $\gamma$ for all $x \in U$
(that is, for all $h \in \calH$, all $x \in U$ and all $y \in A$, we have $x \star (h.y)=\gamma(h).(x\star y)$).
It follows that, for all $x \in A$, the endomorphism $x \star -$ splits as
$y \mapsto h.y+u(y)$, where $h \in \calH$ and $u \in \calL_\K(A)$ is semi-$\calH$-linear with associated field automorphism $\gamma$.

We know that $x \mapsto x \star e$ is an automorphism of the $\K$-vector space $A$.
Thus, the space $E:=\{x \in A : \; x \star e \in \calH\,e\}$ is a $4$-dimensional linear subspace of $A$ over $\K$, and
it contains $e$, obviously. As $\overline{x}=-x+b_q(x,e)\,e$ for all $x \in A$, we deduce that $E$ is stable under $x \mapsto \overline{x}$.
Let $x \in E$. We contend that $x \star -$ stabilizes $\calH e$.
Indeed, we have a quaternion $h \in \calH$ and a semi-$\calH$-linear endomorphism $u$ of $A$
such that $\forall y \in A, \; x \star y = h.y+u(y)$. As $h.e \in \calH e$ and $x \star e \in \calH\,e$, we obtain
$u(e) \in \calH\,e$. As $u$ is semi-$\calH$-linear, it follows that $u(\calH\,e) \subset \calH\,e$, which entails that
$x \star -$ stabilizes $\calH e$: we denote by $\kappa(x)$ the endomorphism of the $\K$-vector space $\calH e$ induced by $x \star -$.
Thus, we have $\kappa(x)\circ \kappa(\overline{x})=q(x)\,\id_{\calH e}$ for all $x \in E$.
As $E$ and $\calH e$ are both $4$-dimensional vector spaces over $\K$, we can choose an isomorphism
of $\K$-algebras $\Delta : \calL(\calH e) \overset{\simeq}{\longrightarrow} \calL(E)$.
Then, we define two laws $\star_1$ and $\bullet_1$ on $E$ by
$$x \star_1 y:=\Delta(\kappa(x))[y] \quad \text{and} \quad x \bullet_1 y:=\Delta(\kappa(\overline{x}))[y],$$
and one easily checks that $(E,\star_1,\bullet_1)$ is an LDB division algebra with dimension $4$ and attached quadratic form
$q_E$. Since $e \in E$, the quadratic form $q_E$ represents $1$ and hence the structure theorem for $4$-dimensional LDB division algebras
shows that $q_E$ is a Pfister form, which proves our claim.
\end{proof}

\begin{claim}\label{dim8lastclaimcharnot2}
$q$ is a Pfister form.
\end{claim}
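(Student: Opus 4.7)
The plan is to upgrade the $4$-dimensional Pfister subform $\pi:=q|_E$ produced by Claim \ref{dim8subpfisterclaim} into a full $3$-fold Pfister decomposition of $q$ along the Cayley-Dickson pattern. Writing $q=\pi\bot\pi'$ with $\pi':=q|_{E^\perp}$, the trivial discriminant of $q$ (proved earlier in this section) together with the trivial discriminant of the Pfister form $\pi$ forces $\pi'$ to have trivial discriminant. Hence $\pi'$ is automatically similar to some $2$-fold Pfister form, and it suffices to show $\pi'\simeq c\,\pi$ for a suitable $c\in\K^*$, which would yield $q\simeq\pi\otimes\langle 1,c\rangle$.

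For this, I would recall the left $\calH$-module structure on $A$ from the proof of Claim \ref{dim8subpfisterclaim}: the quaternion division algebra $\calH=C(-q_P)$ acts on $A$ via $\varphi$, and $E=\calH\cdot e$. Because $\dim_\calH A=2$, any choice of $f\in E^\perp$ with $q(f)\neq 0$ produces a second $\calH$-line $\calH\cdot f$ disjoint from $E$. Using polarizations of the standard identity \eqref{standardidentity}, I would prove that $\calH\cdot f$ is $q$-orthogonal to $\calH\cdot e$, so $\calH\cdot f=E^\perp$ by dimension count, and that the restriction of $q$ to $\calH\cdot f$ is, under the isomorphism $h\mapsto h\cdot f$, equal to $q(f)\cdot N_\calH$, where $N_\calH$ is the reduced norm of $\calH$. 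Since $N_\calH$ coincides with $\pi$ on $\calH\cdot e=E$, this would yield $\pi'\simeq q(f)\,\pi$, whence the desired Pfister decomposition.

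The main obstacle is the composition-type identity $q(h\cdot f)=N_\calH(h)\,q(f)$ for all $h\in\calH$ and $f\in E^\perp$. This is the familiar Hurwitz relation; however, we must derive it from within the LDB framework, without assuming the composition-algebra structure we are aiming to establish. I would verify it on a $\K$-basis $1,e_1,e_2,e_1e_2$ of $\calH$ by direct computation, using that each $e_i\star -$ with $i\in\{1,2\}$ acts as a similarity with respect to $q$ of ratio $q(e_i)$ (which follows by polarizing \eqref{standardidentity} and exploiting the $\gamma$-semi-linear structure already established in the proof of Claim \ref{dim8subpfisterclaim}). Granted the identity, we conclude $q\simeq\pi\otimes\langle 1,q(f)\rangle$, so $q$ is indeed a $3$-fold Pfister form.
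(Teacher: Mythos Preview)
Your proposal contains a genuine gap at its core: the claim that each $e_i\star-$ (for $i\in\{1,2\}$) acts on $(A,q)$ as a similarity of ratio $q(e_i)$ does not follow from the LDB axioms, and it is in fact false for a general $e$-standard LDB division algebra. The only identity at your disposal is \eqref{standardidentity}, which for $x\perp e$ gives $(x\star-)^2=-q(x)\,\id_A$; polarizing it merely yields the Clifford relations $L_xL_z+L_zL_x=-b_q(x,z)\,\id_A$ and says nothing about $q\circ L_x$. To see that the similarity claim can genuinely fail, note that if $(A,\star_H)$ is a Hurwitz algebra with norm $N$ and $g\in\GL(A)$ is arbitrary, then $x\star y:=g^{-1}\bigl(x\star_H g(y)\bigr)$ defines an $e$-standard LDB division algebra with attached form $q=N$, yet $N(e_i\star y)=N\bigl(g^{-1}(e_i\star_H g(y))\bigr)$ equals $N(e_i)N(y)$ for all $y$ only when $g$ is an isometry. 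The Hurwitz identity $q(h\cdot f)=N_\calH(h)\,q(f)$ you need is thus precisely the composition-algebra structure you are trying to establish, so invoking it is circular.

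There is also a secondary error: the subspace $E$ produced in Claim \ref{dim8subpfisterclaim} is \emph{not} $\calH\cdot e$. It was defined as $E=\{x\in A: x\star e\in\calH\,e\}$, the preimage of $\calH\,e$ under the bijection $x\mapsto x\star e$, and there is no reason for this to coincide with the $\calH$-orbit of $e$; in particular nothing guarantees that $E$ is an $\calH$-submodule, nor that $E^\perp$ is one. The paper sidesteps both difficulties by a different route: it passes to a three-dimensional space $P'=\Vect(e_1,e_2,e_3)$ so that the \emph{even} Clifford algebra $\calQ=C_0(-q_{P'})\cong C\langle a,b\rangle$ acts on $A$ and, crucially, commutes with $x\star-$ for every $x$ in the orthogonal complement $F=\Vect(e_4,\dots,e_7)$. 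One then builds a second $4$-dimensional LDB structure on $F$ from the off-diagonal blocks of $x\star-$ relative to an idempotent of $C_0(-q_F)$, and identifies its attached form with a multiple of the quaternion norm via a centralizer argument in $\Mat_4(\K)$ --- never appealing to any multiplicativity of $q$ with respect to $\star$.
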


\begin{proof}
We know that, for some $(a,b)\in (\K^*)^2$, the form $\langle 1,-a,-b,ab\rangle$ is equivalent to a subform of $q$.
Using Witt's cancellation rule, we deduce that $\langle -a,-b,ab\rangle$ is equivalent to a subform of $q_V$.
Thus, no generality is lost in assuming that $q(e_1)=-a$, $q(e_2)=-b$ and $q(e_3)=ab$.
We set $F:=\Vect(e_4,e_5,e_6,e_7)$. Our aim is to prove that $q_F$ is similar to $\langle 1,-a\rangle \otimes \langle 1,-b\rangle$.

Set $P':=\Vect(e_1,e_2,e_3)$. Again, the linear map $x \in P'\mapsto (x \star -) \in \calL(A)$
is naturally extended into a homomorphism $\Delta : C(-q_{P'}) \rightarrow \calL(A)$ of $\K$-algebras.
The even subalgebra $\calQ:=C_0(-q_{P'})$ is isomorphic to the quaternion algebra $C\langle a,b\rangle$, which is
a skew field since $\langle 1,-a,-b\rangle$, being equivalent to a subform of $q$, is anisotropic.
Again, we use $\Delta$ to endow $A$ with a structure of left $\calQ$-vector space.

Polarizing the identity $\forall x \in V, \; (x \star -)^2=-q(x)\,\id_A$
yields that $x \star -$ skew-commutes with $y \star -$ for all $x \in F$ and all $y \in P'$, whence
$x \star -$ commutes with $\Delta(y)$ for all $x \in F$ and all $y \in \calQ$. In other words
the map $x \star -$ is an endomorphism of the $\calQ$-vector space $A$ for all $x \in F$.

From there, we can extend the $\K$-linear map $x \in F \mapsto (x \star -) \in \calL_\calQ(A)$ into a
homomorphism of $\K$-algebras $\Psi : C(-q_F) \longrightarrow \calL_\calQ(A)$.
As $C(-q_F)$ is simple (because $\dim F$ is even and $q_F$ is non-degenerate), we see that $\Psi$ is injective.
On the other hand, as $q \simeq q_F \bot (\langle 1,-a\rangle \otimes \langle 1,-b\rangle)$ and $q$ has discriminant $1$,
we find that $q_F$ has discriminant $1$, and hence $-q_F$ has discriminant $1$.
It follows that the center of $C_0(-q_F)$ is isomorphic to $\K \times \K$ (see \cite[Chapter 9, Theorem 2.10]{Scharlau}).
Remember that, as $\dim F$ is even, the conjugation by a non-zero element of $F$ always induces
the non-identity automorphism of the center of $C_0(-q_F)$.
Let us fix a non-trivial idempotent $p$ in the center of $C_0(-q_F)$; then we see that
$x p x^{-1}=1-p$ for all $x \in F \setminus \{0\}$.
In particular, $\Psi(p)$ is a non-trivial idempotent of $\calL_\calQ(A)$ and, for all
$x \in F$, the map $x \star -$ swaps the $\calQ$-linear subspaces $A_1:=\Ker \Psi(p)$ and $A_2:=\im \Psi(p)$.

Now, we fix an arbitrary operator $f \in \Psi(F) \setminus \{0\}$. We choose a basis $\bfB_1$ of the $\K$-vector space $A_1$ and we see that $\bfB_2:=f(\bfB_1)$
is a basis of the $\K$-vector space $A_2$, whence $\bfB:=\bfB_1 \coprod \bfB_2$ is a basis of the $\K$-vector space $A$.
For $x \in A$, let us denote by $M(x)$ the matrix of the endomorphism $x \star -$ in the basis $\bfB$.
In particular, for $x_0 \in F$ such that $f=x_0 \star -$,
we have
$$M(x_0)=\begin{bmatrix}
0 & -q(x_0) I_4 \\
I_4 & 0
\end{bmatrix}.$$
Fix $h \in \calQ$.
As $A_1$ and $A_2$ are $\calQ$-vector spaces, we know that the matrix of $y \mapsto h.y$ in $\bfB$ has the form
$$N(h)=\begin{bmatrix}
h_1 & 0 \\
0 & h_2
\end{bmatrix} \quad \text{for some $(h_1,h_2)\in \Mat_4(\K)^2$.}$$
As this matrix commutes with $M(x_0)$, a straightforward computation shows that $h_1=h_2$.
Now, for $h \in \calQ$, we can write
$$N(h)=\begin{bmatrix}
R(h) & 0 \\
0 & R(h)
\end{bmatrix},$$
so that $R(\calQ) \subset \Mat_4(\K)$ is a quaternion algebra that is isomorphic to $\calQ$.

Next, for $x \in F$, we know that $x \star -$ swaps $A_1$ and $A_2$, whence we have matrices $B(x)$ and $C(x)$ of $\Mat_4(\K)$ such that
$$M(x)=\begin{bmatrix}
0 & C(x) \\
B(x) & 0
\end{bmatrix},$$
and, as $M(x)$ commutes with $N(q)$ for all $q \in \calQ$, we see that $B(x)$ commutes with $R(q)$ for all $q \in \calQ$.
Note that $x \mapsto B(x)$ is one-to-one since $M(x)$ is non-singular for all $x \in F \setminus \{0\}$.
However, as $R(\calQ)$ is a $4$-dimensional skew field extension of $\K$, its centralizer in $\Mat_4(\K)$
is a skew field that is isomorphic to the opposite algebra $R(\calQ)^{\text{op}}$, whence
$B(F)$ equals this centralizer. As every Clifford algebra possesses an anti-automorphism, we conclude that
$B(F)$ is itself a subalgebra of $\Mat_4(\K)$ that is isomorphic to $\calQ$ as a $\K$-algebra.
For $N \in B(F)$, let us denote by $N^*$ the conjugate of $N$ in the quaternion algebra $B(F)$.

From the identity $\forall x \in F, \; (x \star -)^2=-q(x)\,\id_A$, we deduce
$$\forall x \in F, \quad B(x)C(x)=-q(x)\,I_4.$$
Now, we choose an isomorphism $\Theta : \Mat_4(\K) \rightarrow \calL(F)$ of $\K$-algebras, and we define two laws
$\star_2$ and $\bullet_2$ on $F$ as
$$x \star_2 y:=\Theta(B(x))[y] \quad \text{and} \quad  x \bullet_2 y:=\Theta(C(x))[y],$$
so that $(F,\star_2,\bullet_2)$ is an LDB division algebra with attached quadratic form $-q_F$.
However, with the law $\bullet_3$ defined on $F$ as
$$x \bullet_3 y:=\Theta(B(x)^*)[y],$$
we obtain that $(F,\star_2,\bullet_3)$ is an LDB division algebra whose attached quadratic form is $x \mapsto N_{B(F)}(B(x))$,
where $N_{B(F)}$ denotes the norm of the quaternion algebra $B(F)$. It follows that
$N_{B(F)}$ is similar to $-q_F$. As $B(F)$ is isomorphic to $\calQ$, its norm is equivalent to $\langle 1,-a\rangle \otimes \langle 1,-b\rangle$,
which yields a scalar $c \in \K$ such that $-q_F \simeq c\,\langle 1,-a\rangle \otimes \langle 1,-b\rangle$.
Finally, as $A=\Vect(e,e_1,e_2,e_3) \overset{\bot}{\oplus} F$ and $q_{\Vect(e,e_1,e_2,e_3)} \simeq \langle 1,-a\rangle \otimes \langle 1,-b\rangle$,
we conclude that $q \simeq \langle 1,-a\rangle \otimes \langle 1,-b\rangle \otimes \langle 1,-c\rangle$, as claimed.
\end{proof}

From the above claims, we conclude that $(A,\star,\bullet)$ is equivalent to the octonionic LDB division algebra associated with
the quaternion algebra $C\langle a,b\rangle$ and the scalar $c$.

\section{LDB division algebras over fields of characteristic $2$}\label{char2section}

Throughout the section, we assume that the underlying field $\K$ has characteristic $2$.
Section \ref{degeneratesection} is devoted to the proof of Theorem \ref{degeneratetheo}, that is the determination of the structure of
degenerate LDB division algebras over $\K$. The next two sections are devoted to the theory of
non-degenerate LDB division algebras with dimensions $4$ and $8$, respectively.

\subsection{Degenerate LDB division algebras}\label{degeneratesection}

In this section, we prove Theorem \ref{degeneratetheo}.
Let $(A,\star,\bullet)$ be a degenerate LDB division algebra with attached quadratic form $q$.
If $\dim A=1$, we already know from Section \ref{dim1et2section} that $(A,\star,\bullet)$ is weakly equivalent to the hyper-radicial LDB division algebra
$\K$, and that it is even equivalent to it whenever $q$ represents $1$. Thus, in the rest of the section, we assume that
$\dim A \geq 2$, to the effect that $\dim A$ is even (see Corollary \ref{dimensioncor}).
We denote by $R$ the radical of the polar form of $q$, and we split $A=R \oplus V$, so that $q_V$ is non-degenerate and $\dim V$ is even.
Thus, $\dim R \geq 2$.

Until further notice, we assume that there is an element $e \in R$ such that $q(e)=1$.
We can use the Standardization lemma to reduce the situation to the one where $(A,\star,\bullet)$
is $e$-standard. However, as $e$ belongs to the radical of $q$, formula \eqref{standardidentity} yields
$$\forall x \in A, \; (x \star -)^2=q(x)\,\id_A.$$
By polarizing this formula, we learn in particular that, for all $x \in R$, the operator $x \star -$
commutes with all the operators $y \star -$ with $y \in A$.

Now, denote by $\L$ the subalgebra of $\calL(A)$ generated by the operators $x \star -$ with $x \in R$,
so that $\L$ is a commutative subalgebra of $\calL(A)$ and every element of $\L$
commutes with all the operators $y \star -$ with $y \in A$. It follows from Lemma \ref{minipoly} that every non-zero operator in $\L$
is non-singular, and hence $\L$ is a field! Moreover, as $(x \star -)^2\in \K \id_A$ for all $x \in R$ and as $\K$ has characteristic $2$,
we find that $f^2 \in \K \id_A$ for all $f \in \L$. Thus, $\L$ is a hyper-radicial extension of $\K$.
Now, we have a natural structure of $\L$-vector space on $A$; note that
the operators $x \star -$, for $x \in A$, are all $\L$-linear.
Setting $d:=[\L:\K]$, $m:=\dim_\K V$ and $n:=\dim_\K A$, we have $\dim_\L \calL_\L(A)=\frac{n^2}{d^2}$
and $d \geq \dim R=n-m \geq 2$.

\begin{claim}
The quadratic form $q$ is totally degenerate.
\end{claim}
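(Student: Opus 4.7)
The plan is to assume for contradiction that $V \neq 0$, i.e., $m := \dim_\K V \geq 2$, and to reach a contradiction by embedding the Clifford algebra of $q_V$ into $\calL_\L(A)$ and exploiting dimension bounds.

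First, I would confirm that every operator $y \star -$ (for $y \in A$) is $\L$-linear. Polarizing the identity $(x \star -)^2 = q(x)\,\id_A$ yields
$$(x \star -)(y \star -) + (y \star -)(x \star -) = b_q(x,y)\,\id_A,$$
and the right-hand side vanishes whenever $x \in R$. Hence $\mu : A \rightarrow \calL_\L(A)$, $y \mapsto y \star -$, is a $\K$-linear injection into a space of $\K$-dimension $n^2/d$.

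Restricting $\mu$ to $V$, the standard identity gives $\mu(v)^2 = q_V(v)\,\id_A$ for $v \in V$, so $\mu|_V$ extends to a $\K$-algebra homomorphism $C(q_V) \rightarrow \calL_\L(A)$, and by scalar extension I obtain an $\L$-algebra homomorphism $C(q_V) \otimes_\K \L \rightarrow \calL_\L(A) \cong \Mat_k(\L)$, where $k := n/d = \dim_\L A$. Since $V$ has even $\K$-dimension and $q_V$ is non-degenerate, the same holds for $q_V \otimes \L$ over $\L$, so the source is simple of $\L$-dimension $2^m$ and the homomorphism is injective. This yields $k \geq 2^{m/2}$; combined with $d \geq \dim_\K R = n - m$, it gives
$$n \geq (n-m)\,2^{m/2},$$
an inequality which for every even $m \geq 4$ is incompatible with $n \geq m + 2$ (guaranteed by $\dim R \geq 2$).

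The main obstacle is the remaining case $m = 2$, in which the preceding inequalities only force $n = 4$, $d = 2$ and $k = 2$, so that the injection $C(q_V) \otimes_\K \L \hookrightarrow \Mat_2(\L)$ becomes an isomorphism of $\L$-algebras. In particular, $C(q_V) \otimes_\K \L$ would be a split quaternion algebra over $\L$, forcing $q_V \otimes \L$ to be isotropic over $\L$. To derive the contradiction I would write $q_V \simeq [a,b]$, so that anisotropy of $q_V$ over $\K$ reads as $ab \notin \{t^2+t : t \in \K\}$, and then exploit that $\L$ is hyper-radicial: any $t \in \L$ satisfies $t^2 \in \K$, so $t^2+t \in \K$ forces $t \in \K$, whence $ab \notin \{t^2+t : t \in \L\}$. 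Thus $q_V \otimes \L$ remains anisotropic, contradicting splitness and completing the argument that $m = 0$.
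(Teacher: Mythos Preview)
Your overall strategy matches the paper's: embed $C((q_V)_\L)$ into $\calL_\L(A)$, use simplicity to get $2^m \leq (n/d)^2$, combine with $d \geq n-m$ to eliminate $m \geq 4$, and then treat $m=2$ separately via the forced isomorphism $C((q_V)_\L) \cong \Mat_2(\L)$. Your inequality manipulation differs cosmetically from the paper's analysis of $(d+m)^2 \geq 2^m d^2$, but the content is the same and it is correct.

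There is, however, a genuine gap in your treatment of the case $m=2$. You assert that the splitness of the quaternion algebra $C((q_V)_\L)$ forces the binary form $(q_V)_\L$ to be isotropic. This implication is false in characteristic $2$. For $q_V \simeq [a,b]$, the Clifford algebra $C[a,b]$ is a division algebra if and only if the \emph{three}-dimensional form $\langle 1 \rangle \bot [a,b]$ is anisotropic (equivalently, if and only if the norm form $[1,ab] \bot [a,b]$ is anisotropic), which is strictly weaker than anisotropy of $[a,b]$ itself. Concretely, if $a$ becomes a square in $\L$ then $e_1+\sqrt{a}$ is a nonzero nilpotent in $C[a,b]_\L$, so the algebra splits even though $[a,b]_\L$ may remain anisotropic. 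Your Artin--Schreier observation that $\wp(\L)\cap\K=\wp(\K)$ for a hyper-radicial extension is correct and elegant, but it is being applied to a hypothesis (``$ab\in\wp(\L)$'') that you have not actually established.

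The paper draws the correct conclusion from the isomorphism, namely that $\langle 1\rangle \bot q_V$ becomes isotropic over $\L$, and then performs an explicit descent: writing an isotropic vector over $\L=\K[\sqrt{\delta}]$ and separating the $\K$- and $\sqrt{\delta}$-components, one finds (from the vanishing of the $\sqrt{\delta}$-part of a $b_q$-value) that the $V_\L$-component lies along a $\K$-rational line of $V$. This eventually forces $\langle 1,\delta\rangle \bot q_V \simeq q$ to be isotropic over $\K$, the sought contradiction. Your argument needs a step of this kind in place of the faulty implication.
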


\begin{proof}
Assume that the contrary holds, that is $m \geq 2$.
With the identity $\forall x \in V, \; (x \star -)^2=q(x)\,\id_A$, we may
extend $x \in V \mapsto (x \star -) \in \calL_\L(A)$ into a homomorphism of $\L$-algebras
$\Phi : C((q_V)_\L) \rightarrow \calL_\L(A)$. Since $(q_V)_\L$ is non-degenerate, the algebra
$C((q_V)_\L)$ is simple, whence $\Phi$ is injective. Comparing the dimensions over $\L$ leads to $2^m \leq \frac{n^2}{d^2}$, and hence
$$(d+m)^2 \geq 2^m d^2.$$
Noting that the function $t \mapsto \frac{(t+1)^2}{t^2}$ is decreasing on the interval $(0,+\infty)$,
we find that $(k+1)^2<2 k^2$ for all $k>2$; if $d>2$, it follows
that the sequence $\Bigl((d+k)^2(2^k d^2)^{-1}\Bigr)_{k \geq 0}$ is decreasing, and as the initial value
of that sequence is $1$, the only remaining option is that $d=2$.
In that case, we see that $\Bigl((d+k)^2(2^k d^2)^{-1}\Bigr)_{k \geq 2}$ is decreasing with initial value $1$, whence the above inequality
yields $m=2$. Thus, $\dim R=\dim V=2$, and if we write $q_R \simeq \langle 1,\delta \rangle$, then
$\L$ is isomorphic to the inseparable quadratic extension $\K[t]/(t^2-\delta)$.
As $\dim_\L C((q_V)_\L)=4=\dim_\L \calL_\L(A)$, we deduce that the above homomorphism $\Phi$
is an isomorphism of $\L$-algebras, whence the quaternion algebra $C((q_V)_\L)$ is not a skew field, and we deduce that the quadratic form
$\langle 1 \rangle \bot q_V$ becomes isotropic over $\L$.

We shall conclude by showing that this contradicts the assumption that
$q \simeq \langle 1,\delta \rangle \bot q_V$ should be anisotropic.
Denote by $t$ an element of $\L \setminus \K$ such that $t^2=\delta$.
We embed $V$ naturally into $\L \otimes_\K V$. As $\langle 1 \rangle \bot q_V$ becomes isotropic over $\L$, we
find $\alpha \in \{0,1\}$ together with a non-zero pair $(x,y)\in V^2$
such that $(q_V)_\L(x+t y)=\alpha$. This yields $q(x)+\delta q(y)=\alpha$ and $b_q(x,y)=0$.
As $V$ has dimension $2$, the second equality shows that $x$ and $y$ are collinear, yielding
a non-zero vector $z \in V$ together with a non-zero pair $(\lambda,\mu)\in \K^2$ such that $x=\lambda z$ and $y=\mu z$.
Thus, $\alpha=(\lambda^2+\delta \mu^2)q(z)$. As $(\lambda,\mu) \neq (0,0)$ and $q$ is anisotropic, we have $\lambda^2+\delta \mu^2 \neq 0$.
Setting $\beta:=\alpha (\lambda^2+\delta \mu^2)^{-1} \lambda$ and
$\gamma:=\alpha (\lambda^2+\delta \mu^2)^{-1} \mu$, we finally obtain
$$q(z)=\beta^2 +\delta \gamma^2$$
by noting that $\alpha^2=\alpha$.
As $z \neq 0$, this shows that $\langle 1,\delta \rangle \bot q_V$ is isotropic, contradicting our assumptions.
Thus, we conclude that $m=0$, as claimed.
\end{proof}

Next, we prove that $(A,\star,\bullet)$ is equivalent to a hyper-radicial LDB division algebra.
Since $A$ is a non-zero vector space over $\L$, we have $d \leq n$, and hence the injective linear map
$\Phi : x \in A \mapsto (x \star -) \in \L$ is an isomorphism of vector spaces (over $\K$).
It follows that $q$ is equivalent to the quadratic form $\lambda \mapsto \lambda^2$ on $\L$, which is
associated with the hyper-radicial LDB division algebra $\L$.

In order to conclude, it remains to prove that an LDB division algebra is equivalent to $(A,\star,\bullet)$ whenever
its attached quadratic form is equivalent to $q$. As explained in Remark \ref{strategyforequivalenceremark},
it suffices in this prospect to consider an LDB division algebra $(A,\star',\bullet')$ that is $e$-standard and whose attached quadratic form is $q$.
Using the isomorphism $\Phi$, we enrich the $\K$-vector space $A$ into a hyper-radicial field extension of $\K$, so that $\Phi$
is an isomorphism of $\K$-algebras. Then, the newly defined multiplication $\times$ on $A$
only depends on $q$, as we have
$$\forall (x,y)\in A^2, \; q(x \times y).\id_A=\Phi(x \times y)^2=\Phi(x)^2\Phi(y)^2=q(x)q(y)\,\id_A,$$
whence
$$\forall (x,y)\in A^2, \; q(x \times y)=q(x)q(y).$$
As $q$ is injective (it is a group homomorphism from $(A,+)$ to $(\K,+)$ since $q$ is totally degenerate, and its kernel is zero since $q$ is anisotropic),
the above identity shows that $\times$ is determined by $q$.
Now, with the new LDB division algebra $(A,\star',\bullet')$, we obtain another homomorphism of $\K$-algebras
$\Phi' : A \rightarrow \calL_\K(A)$. As $A$ is a field, the Skolem-Noether theorem
yields an automorphism $h$ of the $\K$-vector space $A$
such that
$$\forall x \in A, \quad \Phi'(x)=h \circ \Phi(x) \circ h^{-1},$$
which reads
$$\forall x \in A, \quad x \star' -=h \circ (x \star -) \circ h^{-1}.$$
From Remark \ref{strategyforequivalenceremark}, we conclude that $(A,\star',\bullet')$ is equivalent to $(A,\star,\bullet)$.
In particular, we obtain that $(A,\star,\bullet)$ is equivalent to the hyper-radicial LDB division algebra $(A,\times,\times)$.

\vskip 3mm
Now, we can conclude: given a degenerate LDB division algebra $(A,\star,\bullet)$ with attached quadratic form $q$,
we can choose a non-zero vector $e$ in the radical of $b_q$, and we find a scalar $\lambda$ such that
the quadratic form attached to
$(A,\star,\lambda\bullet)$ maps $e$ to $1$. With the above results, we deduce that $(A,\star,\lambda\bullet)$
is equivalent to a hyper-radicial LDB division algebra. Finally, using Remark \ref{strategyforequivalenceremark},
we obtain the remaining results of Theorem \ref{degeneratetheo},
together with both statements of Theorem \ref{classbyquadformtheo} for degenerate LDB division algebras.

\subsection{Four-dimensional non-degenerate LDB division algebras}\label{dim4char2}

Let $(A,\star,\bullet)$ be a non-degenerate LDB division algebra with dimension $4$.
We assume that the quadratic form $q$ attached to $A$ represents $1$, and we choose $e \in A$ such that $q(e)=1$.
Our goal is to show that $(A,\star,\bullet)$ is equivalent to a quaternionic LDB division algebra and that
an LDB division algebra is equivalent to $(A,\star,\bullet)$ whenever its attached quadratic form is equivalent to $q$.
In this prospect, we lose no generality in assuming that $A$ is $e$-standard.
Our first step consists in proving that the Arf invariant of $q$, which we classically denote by $\Delta(q)$, equals $0$.

Given a non-degenerate alternating form $B$ on a finite-dimensional vector space $V$, a symplectic basis of
$(V,B)$ is a basis $(e_1,\dots,e_{2n})$ of $V$ in which the $2$-dimensional subspaces $\Vect(e_{2k-1},e_{2k})$, for $k \in \lcro 1,n\rcro$,
are pairwise $B$-orthogonal and $B(e_{2k-1},e_{2k})=1$ for all $k \in \lcro 1,n\rcro$.
Remember that, given a non-degenerate quadratic form $\varphi$ on a finite-dimensional vector space $V$,
the Arf invariant of $\varphi$ is the class of $\underset{k=1}{\overset{n}{\sum}} \varphi(e_{2k-1})\varphi(e_{2k})$ in the quotient (additive) group $\K/\{x^2+x \mid x \in \K\}$
for any symplectic basis $(e_1,\dots,e_{2n})$ of $(V,b_\varphi)$.

Let us extend $e$ into a symplectic basis $(e_1,e_2,e_3,e)$ of $(A,b_q)$.
We set $P:=\Vect(e_1,e_2)$, so that $q_P$ is a non-degenerate quadratic form.
As every vector of $P$ is orthogonal to $e$, we obtain $\forall x\in P, \; (x \star -)^2=q(x)\,\id_A$,
whence the linear map $x \in P \mapsto (x \star -) \in \calL(A)$ can be naturally extended into a homomorphism of algebras
$\Phi : C(q_P) \rightarrow \calL(A)$. Set $f:=e_3 \star -$.
By polarizing the identity
$\forall x \in A, \; (x \star -)^2=b_q(x,e)\,(x \star -)+q(x)\,\id_A$, we obtain
$$\forall x \in P, \quad \Phi(x)\circ f+f \circ \Phi(x)=\Phi(x).$$
Let $x \in P \setminus \{0\}$. Then, $\Phi(x) \circ f \circ \Phi(x)^{-1}=f+\id_A$.
However, $x(e_1e_2)x^{-1}=e_1e_2+1$ (by standard computations in the Clifford algebra $C(q_P)$).
Thus, $\Phi(x) \circ (f+\Phi(e_1e_2)) \circ \Phi(x)^{-1}=f+\Phi(e_1e_2)$.
It follows that
$$g:=f+\Phi(e_1e_2)$$
commutes with $x \star -$ for all $x \in P$.
On the other hand, we see that $\Phi(e_1e_2)\circ f \circ \Phi(e_1e_2)^{-1}=\Phi(e_1)\circ (f+\id_A) \circ \Phi(e_1)^{-1}=f$, whence
$g$ also commutes with $f=e_3 \star -$. Finally, $f+\Phi(e_1e_2)$ commutes with $\id_A=e \star -$,
whence $g$ commutes with $x \star -$ for all $x \in A$.
On the other hand, as we have just seen that $f$ and $\Phi(e_1e_2)$ commute, we obtain
$$g^2+g=f^2+f+\Phi((e_1e_2)^2+e_1e_2)=(q(e_3)+q(e_1)q(e_2))\,\id_A.$$

Assume now that the Arf invariant of $q$ is non-zero. As $q \simeq [q(e_1),q(e_2)] \bot [1,q(e_3)]$,
this invariant is represented by the scalar $\delta:=q(e_3)+q(e_1)q(e_2)$, and we deduce that the polynomial
$t^2+t+\delta$ is irreducible over $\K$. Thus, $\L:=\K[g]$ is a field and we can use it to extend the
scalar multiplication on $A$ to turn $A$ into a vector space over $\L$.
The above commutations show that $x \star -$ is $\L$-linear for all $x \in A$, thus yielding a homomorphism
$\Psi : C(q_P)_\L \rightarrow \calL_\L(A)$ of $\L$-algebras. As $\dim_\L (A)=2$, we see that $\dim_\L C((q_P)_\L)=4=\dim_\L \calL_\L(A)$.
Since $C(q_P)_\L$ is simple, we deduce that $\Psi$ is an isomorphism, to the effect that the quaternion algebra $C(q_P)_\L$
is not a skew field. It follows that the quadratic form $\langle 1\rangle \bot [q(e_1),q(e_2)]$ becomes isotropic over $\L$.
Note that $g^2+g=\delta\,\id_A$. We naturally embed $P$ into the $\L$-vector space $\L \otimes_\K P$.
Then, we find $\varepsilon \in \{0,1\}$ together with a non-zero pair $(x,y)\in P^2$ such that
$(q_P)_\L(x+g y)=\varepsilon$. As $\langle 1 \rangle\bot q_P$ is anisotropic, we see that $y \neq 0$.
Expanding the above identity, we obtain $q(x)+g b_q(x,y)+g^2 q(y)=\varepsilon$, whence
$$q(x)+\delta q(y)=\varepsilon \quad \text{and} \quad b_q(x,y)=q(y).$$
As $y \neq 0$, we have $q(y) \neq 0$ and we can set $x_1:=\frac{x}{q(y)}$, so that $(x_1,y)$ is a symplectic basis of $P$.
As $q(x_1)=\frac{q(x)}{q(y)^2}=\frac{\varepsilon}{q(y)^2}+\frac{\delta}{q(y)}$, we deduce that
$\Delta(q_P)=\bigl[\delta+\frac{\varepsilon}{q(y)}\bigr]$, whence
$\Delta(q_{\Vect(e_3,e)})=\Delta(q)-\Delta(q_P)=\bigl[\frac{\varepsilon}{q(y)}\bigr]$. As $q_{\Vect(e_3,e)}$ represents $1$, it follows that
$q_{\Vect(e_3,e)} \simeq \bigl[1,\frac{\varepsilon}{q(y)}\bigr]$, whence
$$q \simeq \bigl[q(x_1),q(y)\bigr] \bot \Bigl[1,\frac{\varepsilon}{q(y)}\Bigr].$$
From there, we deduce that $\Bigl\langle q(y),\frac{\varepsilon}{q(y)}\Bigr\rangle \simeq \langle q(y),\varepsilon q(y)\rangle$ is equivalent to a subform of $q$,
which is absurd because this form is obviously isotropic.
Thus, we have obtained:
\begin{center}
The Arf invariant of $q$ equals $0$.
\end{center}
As $q$ represents $1$, it follows that $q$ is equivalent to $[1,ab] \bot [a,b]$ for some $(a,b)\in (\K^*)^2$,
that is $q$ is equivalent to the norm of the quaternion algebra $C [a,b]$.
In order to conclude, it suffices to prove that two LDB division algebras are equivalent whenever their attached quadratic forms are equivalent to $q$.

To see this, we start by noting that the polynomial $t^2+t+(q(e_3)+q(e_1)q(e_2))$ splits over $\K$.
However, by Lemma \ref{minipoly}, the minimal polynomial of $g$ must be irreducible, whence it must have degree $1$ since it divides
$t^2+t+(q(e_3)+q(e_1)q(e_2))$. This yields a scalar $\lambda$ such that $g=\lambda \id_A$, that is
$$e_3 \star -=\Phi(e_1e_2)+\lambda\,\id_A.$$

Now, let $(A,\star',\bullet')$ be an $e$-standard LDB division algebra with attached quadratic form $q$.
As above, the linear map $x \in P \mapsto (x \star' -)\in \calL(A)$ extends into a homomorphism $\Phi' : C(q_P) \rightarrow \calL(A)$ of $\K$-algebras,
and we obtain a scalar $\mu \in \K$ such that
$$e_3 \star' -=\Phi'(e_1e_2)+\mu \id_A.$$
Since $C(q_P)$ is simple and $\calL(A)$ is central and simple, the Skolem-Noether theorem yields an automorphism $h$ of $A$
such that $\Phi'(y)=h \circ \Phi(y) \circ h^{-1}$ for all $y \in C(q_P)$.
Next, for $\alpha \in \K$, we denote by $u_\alpha$ the isomorphism from $A$ to $C(q_P)$ that maps
each vector $e_1,e_2$ to itself and that maps $e_3$ to $\alpha +e_1e_2$ and $e$ to $1$.
Thus, we have
$$\forall x \in A, \quad x \star -=\Phi(u_\lambda(x)) \quad \text{and} \quad x \star' -=\Phi'(u_\mu(x)).$$
Therefore, with $v:=u_{\lambda}^{-1} \circ u_\mu$, we see that
$$\forall x \in A, \quad x \star' -=h \circ (v(x) \star -) \circ h^{-1}.$$
Finally, we note that $v(x)=x+(\mu-\lambda)b_q(x,e)e$ for all $x \in A$, whence
$v$ commutes with $x \mapsto \overline{x}=x+b_q(x,e)e$ (both being polynomials in the operator $x \mapsto b_q(x,e)\,e$).
Thus, with Remark \ref{strategyforequivalenceremark}, we conclude that $(A,\star,\bullet)$ and $(A,\star',\bullet')$ are equivalent.

Finally, as the quadratic form attached to $C[a,b]$ is equivalent to $q$,
we conclude that this quaternionic LDB division algebra is equivalent to $(A,\star,\bullet)$, which completes the proof.

\subsection{Eight-dimensional non-degenerate LDB division algebras}\label{dim8char2}

Here, we determine the non-degenerate LDB division algebras of dimension $8$ over $\K$.
The strategy is largely similar to the one of Section \ref{dim8charnot2}.
First of all, we consider an $8$-dimensional non-degenerate LDB division algebra $(A,\star,\bullet)$
whose attached quadratic form $q$ represents $1$.
We need to prove that $A$ is equivalent to an octonionic LDB division algebras and that
every LDB division algebra with an equivalent attached quadratic form is equivalent to $A$.
Choosing $e \in A$ such that $q(e)=1$, we know from the Standardization lemma that no generality is lost in assuming that
$(A,\star,\bullet)$ is $e$-standard.

We extend $e$ into a symplectic basis $(e_1,\dots,e_7,e)$ of $(A,b_q)$. Set $f:=e_7 \star -$,
$W:=\Vect(e_1,\dots,e_6)$, and note that $q_W$ is non-degenerate and that every vector of $W$ is orthogonal to $e$, so that
$$\forall x \in W, \quad (x \star -)^2=q(x)\,\id_A.$$
Thus, the linear map $x \in W \mapsto (x \star -) \in \calL(A)$ extends into a homomorphism
$\Phi : C(q_W) \rightarrow \calL(A)$ of $\K$-algebras. As $C(q_W)$ is simple and $\dim C(q_W)=2^6=\dim \calL(A)$,
we find that $\Phi$ is an isomorphism.

Let $x \in W \setminus \{0\}$. Polarizing the identity $\forall y \in \Vect(e_1,\dots,e_7), \; (y \star -)^2+b_q(y,e)\,(y\star -)=q(y)\,\id_A$,
we obtain $(x \star -) \circ f+f \circ (x \star -)=(x \star -)$, whence
$(x \star -) \circ f \circ (x \star -)^{-1}=f+\id_A$.
However, $x (e_1e_2+e_3e_4+e_5e_6) x^{-1}=(e_1e_2+e_3e_4+e_5e_6)+1$,
whence $x \star -$ commutes with $\Phi(e_1e_2+e_3e_4+e_5e_6)+f$.
As $\Phi$ is surjective and $\calL(A)$ is a central $\K$-algebra, we obtain a scalar $\lambda$ such that
$$\Phi(e_1e_2+e_3e_4+e_5e_6)+f=\lambda \id_A.$$
 From there, we find
\begin{align*}
 f^2+f & =\Phi((e_1e_2+e_3e_4+e_5e_6)^2+(e_1e_2+e_3e_4+e_5e_6))+(\lambda^2+\lambda)\id_A \\
 & =\bigl(q(e_1)q(e_2)+q(e_3)q(e_4)+q(e_5)q(e_6)\bigr)+\lambda^2+\lambda.
 \end{align*}
However, $f^2+f=q(e_7)\,\id_A$, whence $\Delta [1,q(e_7)]=\Delta(q_W)$. As
$A=W \overset{\bot}{\oplus} \Vect(e,e_7)$,
we conclude:
\begin{center}
The Arf invariant of $q$ equals $0$.
\end{center}

\begin{claim}
Every LDB division algebra with $q$ as its attached quadratic form is equivalent to $(A,\star,\bullet)$.
\end{claim}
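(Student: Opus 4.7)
The plan is to follow the strategy laid out in Remark \ref{strategyforequivalenceremark}, exactly as was done in Sections \ref{dim8charnot2} and \ref{dim4char2}: given an arbitrary $e$-standard LDB division algebra $(A,\star',\bullet')$ whose attached quadratic form is $q$, I will exhibit an automorphism $h$ of $A$ together with an automorphism $v$ of $A$ that commutes with $x \mapsto \overline{x}$ and satisfies
$$\forall x \in A, \quad x \star' -=h \circ (v(x) \star -) \circ h^{-1}.$$

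First I repeat the entire construction carried out above for $(A,\star,\bullet)$, now starting from $(A,\star',\bullet')$: the linear map $x \in W \mapsto (x \star' -)\in \calL(A)$ extends into a homomorphism $\Phi' : C(q_W) \rightarrow \calL(A)$ of $\K$-algebras, which is an isomorphism by the same dimensional argument as for $\Phi$, and a scalar $\mu \in \K$ arises for which
$$e_7 \star' -=\Phi'(e_1e_2+e_3e_4+e_5e_6)+\mu\,\id_A.$$
Since $C(q_W)$ is a simple $\K$-algebra and $\calL(A)$ is central simple, the Skolem-Noether theorem yields an automorphism $h$ of $A$ such that $\Phi'(y)=h \circ \Phi(y) \circ h^{-1}$ for every $y \in C(q_W)$.

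For $\alpha \in \K$, I then define an isomorphism of vector spaces $u_\alpha : A \overset{\simeq}{\rightarrow}C(q_W)$ that sends each $e_i$ to itself for $i \in \lcro 1,6\rcro$, $e$ to $1$, and $e_7$ to $\alpha+(e_1e_2+e_3e_4+e_5e_6)$. The descriptions of $\Phi$, $\Phi'$, $\lambda$ and $\mu$, together with the fact that $e$ is a left unity for both $\star$ and $\star'$, then unify into the formulas
$$\forall x \in A, \quad x \star -=\Phi(u_\lambda(x)) \quad \text{and} \quad x \star' -=\Phi'(u_\mu(x)).$$
Setting $v:=u_\lambda^{-1} \circ u_\mu$, the intertwining relation between $\Phi$ and $\Phi'$ immediately produces $x \star' -=h \circ (v(x) \star -) \circ h^{-1}$ for every $x \in A$.

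The remaining verification, and the main piece of bookkeeping, is that $v$ commutes with $x \mapsto \overline{x}=x+b_q(x,e)\,e$. A direct computation on the symplectic basis $(e_1,\dots,e_7,e)$, using $b_q(e_7,e)=1$ and the fact that $e_1,\dots,e_6$ are $q$-orthogonal to $e$, yields
$$\forall x \in A, \quad v(x)=x+(\lambda+\mu)\,b_q(x,e)\,e.$$
Thus $v$ and $x \mapsto \overline{x}$ are both polynomials in the rank-one operator $T : x \mapsto b_q(x,e)\,e$ and therefore commute, so Remark \ref{strategyforequivalenceremark} provides the desired equivalence of $(A,\star,\bullet)$ with $(A,\star',\bullet')$. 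The whole argument is the exact analogue of the one given in Section \ref{dim8charnot2}, the only new ingredient being the characteristic-$2$ identification of the single scalar degree of freedom via $e_1e_2+e_3e_4+e_5e_6$ in place of $e_1e_2e_3e_4e_5e_6$.
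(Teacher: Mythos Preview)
Your proof is correct and is essentially identical to the paper's own argument: the paper also reduces to an $e$-standard $(A,\star',\bullet')$, extends $x\mapsto x\star' -$ to an isomorphism $\Phi':C(q_W)\to\calL(A)$, finds a scalar $\lambda'$ with $e_7\star' -=\lambda'\id_A+\Phi'(e_1e_2+e_3e_4+e_5e_6)$, applies Skolem--Noether to conjugate $\Phi$ into $\Phi'$, and checks that the resulting automorphism $u(x)=x+(\lambda'-\lambda)b_q(x,e)e$ commutes with $x\mapsto\overline{x}$. Your $(\lambda+\mu)$ equals the paper's $(\lambda'-\lambda)$ since the characteristic is $2$.
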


\begin{proof}
Again, we simply need to consider an $e$-standard LDB division algebra $(A,\star',\bullet')$
with attached quadratic form $q$. As above, we extend $x \in W \mapsto (x \star' -) \in \calL(A)$
into an isomorphism $\Phi' : C(q_W) \rightarrow \calL(A)$ of $\K$-algebras, and we find a scalar $\lambda'$ such that
$$e_7 \star' -=\lambda'\,\id_A+\Phi'(e_1e_2+e_3e_4+e_5e_6).$$
Using the Skolem-Noether theorem, we obtain an automorphism $h$ of $A$ such that
$$\forall y \in C(q_W), \quad \Phi'(y)=h \circ \Phi(y) \circ h^{-1}.$$
Now, for $\alpha \in \K$, denote by $v_\alpha$ the linear map from $A$ to $C(q_W)$ that maps $e_i$ to itself for all $i \in \lcro 1,6\rcro$,
that maps $e$ to $1$ and that maps $e_7$ to $\alpha+e_1e_2+e_3e_4+e_5e_6$.
Thus, for all $x \in A$, we find
$$x \star -=\Phi(v_\lambda(x)) \quad \text{and} \quad
x \star'-=\Phi'(v_{\lambda'}(x)).$$
With $u:=v_\lambda^{-1} \circ v_{\lambda'}$, it follows that
$$\forall x \in A, \quad x \star' -=h \circ (u(x) \star -) \circ h^{-1}.$$
Finally, we note that $u(x)=x+(\lambda'-\lambda)b_q(x,e)e$ for all $x \in A$, and hence $u$ commutes with
$x \mapsto \overline{x}$ as both operators are polynomials in $x \mapsto b_q(x,e)\,e$.
From there, Remark \ref{strategyforequivalenceremark} entails that $(A,\star,\bullet)$ is equivalent to $(A,\star',\bullet')$.
\end{proof}

\begin{claim}\label{subpfisterclaimchar2}
There is a $4$-dimensional subspace $V_1$ of $A$ that contains $e$ and such that
$q_{V_1} \simeq [1,ab] \bot [a,b]$ for some $(a,b)\in (\K^*)^2$.
\end{claim}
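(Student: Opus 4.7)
The plan is to mirror the strategy of Claim \ref{dim8subpfisterclaim}: construct a $4$-dimensional subspace $V_1 \ni e$ of $A$ that carries an induced LDB division algebra structure with attached form $q|_{V_1}$, and invoke the classification of Section \ref{dim4char2} to conclude $q|_{V_1} \simeq [1,ab] \bot [a,b]$ for some $(a,b)\in (\K^*)^2$. Concretely, fix $P:=\Vect(e_1,e_2)$ and consider the quaternion subalgebra $\calH:=\Phi(C(q_P))$ of $\calL(A)$, isomorphic to $C[q(e_1),q(e_2)]$; use $\Phi$ to view $A$ as a left $\calH$-module, and set $V_1:=\{x\in A : x\star e \in \calH e\}$.

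Assuming $\calH$ is a skew field, $A$ becomes a $2$-dimensional left $\calH$-vector space, $\calH e$ is a $4$-dimensional $\K$-subspace of $A$, and $V_1$, being the preimage of $\calH e$ under the $\K$-linear isomorphism $x\mapsto x\star e$, is $4$-dimensional and contains $e$. Stability of $V_1$ under $x \mapsto \overline{x}=x+b_q(x,e)\,e$ is immediate. Polarizing the standard identity $(y\star -)^2+b_q(y,e)(y\star -)=q(y)\,\id_A$ yields, in characteristic $2$, that $(x\star -)$ commutes with $\Phi(y)$ whenever $x\in W$ and $y\in P$ satisfy $b_q(x,y)=0$; in particular, $(x\star -)$ is $\calH$-linear for every $x\in\Vect(e_3,\dots,e_6)$. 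For $x=e_7$, the same polarization instead produces the affine relation $[e_7\star -,\Phi(y)]=\Phi(y)$ for $y\in P$, equivalently $\Phi(y)(e_7\star -)\Phi(y)^{-1}=(e_7\star -)+\id_A$. Combining these ingredients with the defining condition on $V_1$, one checks that $x\star -$ stabilizes $\calH e$ for every $x\in V_1$, inducing $\kappa(x)\in\calL(\calH e)$ with $\kappa(x)\circ\kappa(\overline{x})=q(x)\,\id_{\calH e}$. Choosing any $\K$-algebra isomorphism $\Delta:\calL(\calH e)\overset{\simeq}{\to}\calL(V_1)$ and defining $x\star_1 y:=\Delta(\kappa(x))[y]$ and $x\bullet_1 y:=\Delta(\kappa(\overline{x}))[y]$ makes $(V_1,\star_1,\bullet_1)$ into a $4$-dim LDB division algebra with attached form $q|_{V_1}$, and since $q(e)=1$, Section \ref{dim4char2} supplies the sought factorization.

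The main obstacle is the preliminary step of showing that $\calH$ is a skew field. The Hurwitz norm of $\calH=C[q(e_1),q(e_2)]$ is $[1,q(e_1)q(e_2)]\bot [q(e_1),q(e_2)]$, and one would like to display it as a subform of the anisotropic form $q$. In characteristic not $2$, the analogous norm was simply $\langle 1\rangle\bot q_P$, visibly a subform of $q$ via the vector $e$. Here the summand $[1,q(e_1)q(e_2)]$ depends on the Arf class of $q(e_1)q(e_2)$ modulo $\{x^2+x:x\in\K\}$, so the embedding is no longer free of charge: one must leverage the identity $\Delta(q)=0$ proved above and adjust the symplectic basis of $A$ so that $q(e_7)\equiv q(e_1)q(e_2) \pmod{\{x^2+x\}}$, at which point $q|_{\Vect(e_1,e_2,e_7,e)}$ equals the norm form of $\calH$ and its anisotropy is inherited from that of $q$. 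A secondary difficulty is that the semi-$\calH$-linearity argument used to verify stabilization in characteristic not $2$ is unavailable in characteristic $2$; the affine quasi-commutation of $e_7\star -$ with $\Phi(P)$ must be handled by a direct calculation using the polarized identities above.
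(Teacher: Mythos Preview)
Your plan is the paper's own approach (the paper takes $P=\Vect(e_5,e_6)$, which is symmetric to your $\Vect(e_1,e_2)$), and it works; but both of the ``obstacles'' you flag are illusory.

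For the skew-field step, you do not need the full $4$-dimensional norm of $\calH$ to embed in $q$. Since $e_1,e_2\in\{e\}^\bot$, the restriction of $q$ to $\Vect(e,e_1,e_2)$ is exactly $\langle 1\rangle\bot[q(e_1),q(e_2)]$, hence anisotropic; this $3$-dimensional criterion already forces $C(q_P)$ to be a skew field (were it split, its norm form would be hyperbolic, and every $3$-dimensional subspace of a hyperbolic $4$-dimensional quadratic space meets a maximal totally isotropic plane nontrivially). No basis adjustment and no appeal to $\Delta(q)=0$ is needed.

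For the $e_7$ step, rather than a direct calculation with the affine relation, use the identity established just before this claim:
\[
e_7\star-\;=\;\Phi(e_1e_2+e_3e_4+e_5e_6)+\lambda\,\id_A.
\]
With $P=\Vect(e_1,e_2)$, the summand $\Phi(e_1e_2)$ lies in $\calH$, while $\Phi(e_3e_4)$, $\Phi(e_5e_6)$ and $\lambda\,\id_A$ are $\calH$-linear (each $e_i\star-$ for $i\in\{3,4,5,6\}$ commutes with $\Phi(e_1)$ and $\Phi(e_2)$). Thus every $x\star-$ decomposes as $y\mapsto h.y+u(y)$ with $h\in\calH$ and $u$ $\calH$-linear, and the stabilization argument from Claim~\ref{dim8subpfisterclaim} carries over verbatim.

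One genuine omission: before invoking Section~\ref{dim4char2} you must know that $q_{V_1}$ is non-degenerate. The paper notes that $P\subset V_1$ (since $x\star e=\Phi(x).e\in\calH e$ for $x\in P$), so $b_q(e_1,e_2)=1$ shows the polar form of $q_{V_1}$ is nonzero; Theorem~\ref{degeneratetheo} then rules out the degenerate case, and Section~\ref{dim4char2} applies.
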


\begin{proof}
Set $P:=\Vect(e_5,e_6)$.
As $\Vect(e,e_5,e_6)=\K e  \overset{\bot}{\oplus} P$, we see that $\langle 1 \rangle \bot [q(e_5),q(e_6)]$ is anisotropic,
whence the quaternion algebra $\calQ:=C(q_P)$ is a skew field. Seeing it naturally as a subalgebra of $C(q_W)$,
we can use the isomorphism $\Phi$ to endow $A$ with a structure of left vector space over $\calQ$.

Next, we analyze how the operators $x \star -$ behave with that new vector space structure.
For all $x \in \Vect(e_1,\dots,e_4)$, we obtain that $x \star -$ commutes with $e_5 \star -$ and $e_6 \star -$
by polarizing the identity $\forall y \in \Vect(e_1,\dots,e_6), \; (y \star -)^2=q(y)\,\id_A$.
Thus, $x \star -$ is $\calQ$-linear for all $x \in \Vect(e_1,\dots,e_4)$.
On the other hand, we have seen that $e_7 \star -$ is a linear combination of $\id_A$, of
$(e_1 \star -)\circ (e_2 \star -)+(e_3 \star -)\circ (e_4 \star -)$ and of
$\Phi(e_5e_6)$. Thus, for every $x \in A$, there exists $h \in \calQ$ and a $\calQ$-linear endomorphism $u$ of $A$
such that
$$\forall y \in A, \quad x \star y=h.y+u(y).$$
Then, setting $V_1:=\bigl\{x \in A : \; x \star e \in \calQ\,e\bigr\}$,
we proceed as in the proof of Claim \ref{dim8subpfisterclaim} and endow $V_1$ with a structure of LDB division algebra
with attached quadratic form $q_{V_1}$. As $V_1$ contains $e_5$ and $e_6$, the form $q_{V_1}$ is not totally degenerate,
whence Theorem \ref{degeneratetheo} yields that $q_{V_1}$ is non-degenerate, and as it represents $1$
we deduce from the results of Section \ref{dim4char2} that $q_{V_1} \simeq [1,ab] \bot [a,b]$ for some $(a,b)\in (\K^*)^2$.
\end{proof}

\begin{claim}
There is a scalar $c$ such that $q \simeq \langle 1,c \rangle \otimes \bigl([1,ab] \bot [a,b]\bigr)$.
\end{claim}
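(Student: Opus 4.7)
The plan is to decompose $A$ orthogonally as $A = V_1 \bot F$ under $b_q$ and reduce to showing that $q_F$ is similar to $q_{V_1}$. Since $q_{V_1} \simeq [1,ab] \bot [a,b]$ has non-degenerate polar form, $b_q$ is non-degenerate on $V_1$, so $F := V_1^\perp$ has dimension $4$, $q_F$ is non-degenerate, and $q = q_{V_1} \bot q_F$. If a scalar $c \in \K^*$ is found with $q_F \simeq c\, q_{V_1}$, then $q \simeq q_{V_1} \bot c\, q_{V_1} \simeq \langle 1,c\rangle \otimes ([1,ab] \bot [a,b])$ and the claim follows.

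To obtain $c$, I build a quaternion algebra acting on $A$ that captures the structure of $V_1$. Decompose $V_1 = \Vect(e,h) \bot P$ where $P := \Vect(g_1,g_2)$ is a symplectic pair with $q_P \simeq [a,b]$. Since $P \subset e^\perp$, \eqref{standardidentity} gives $(x \star -)^2 = q(x)\id_A$ for all $x \in P$, so $x \in P \mapsto (x \star -)$ extends to a homomorphism $\Phi : C(q_P) \to \calL(A)$ whose image $\calQ$ is a quaternion skew field, because $C(q_P) \simeq C[a,b]$ has its norm form isomorphic to $q_{V_1}$, a subform of the anisotropic $q$. Via $\Phi$, $A$ acquires a structure of $2$-dimensional left $\calQ$-vector space. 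Since $F \subset V_1^\perp \cap e^\perp$, polarizing $(z \star -)^2 = q(z)\id_A$ on $e^\perp$ shows that $x \star -$ commutes with $\Phi(\calQ)$ for every $x \in F$, so $x \in F \mapsto (x \star -)$ extends to an injective homomorphism $\Psi : C(q_F) \to \calL_{\calQ}(A)$ of $\K$-algebras, and the dimension count $\dim_\K C(q_F) = 16 = \dim_\K \calL_{\calQ}(A)$ shows that $\Psi$ is an isomorphism. The additivity of the Arf invariant together with $\Delta(q) = 0$ (already established) and $\Delta(q_{V_1}) = [ab] + [ab] = 0$ forces $\Delta(q_F) = 0$, so the center of $C_0(q_F)$ is isomorphic to $\K \times \K$ and contains a non-trivial idempotent $p$. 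Its image $\Psi(p)$ is a non-trivial idempotent of $\calL_\calQ(A)$ splitting $A = A_1 \oplus A_2$ into two $1$-dimensional $\calQ$-subspaces (each of $\K$-dimension $4$); moreover conjugation in $C(q_F)$ by any anisotropic vector of $F$ swaps the two central idempotents, so $x \star -$ swaps $A_1$ and $A_2$ for every nonzero $x \in F$.

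From here the argument mirrors the final portion of Claim \ref{dim8lastclaimcharnot2}. Fix $x_0 \in F \setminus \{0\}$, take a $\K$-basis of $A_1$, and extend to a basis of $A$ by applying $x_0 \star -$; writing matrices in this basis, commutation of $M(x_0)$ with the $\calQ$-action forces $N(h) = \Diag(R(h), R(h))$ where $R(\calQ)$ is a copy of $\calQ$ in $\Mat_4(\K)$, while $M(x)$ for $x \in F$ is anti-diagonal with blocks $B(x), C(x) \in \Mat_4(\K)$ satisfying $B(x)C(x) = q(x) I_4$ (from $(x\star -)^2 = q(x)\id_A$) and $B(x)$ centralizing $R(\calQ)$. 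The centralizer of the $4$-dimensional quaternion skew field $R(\calQ)$ in $\Mat_4(\K)$ is itself a $4$-dimensional quaternion algebra isomorphic to $\calQ^{\text{op}}$, so the injective linear map $x \mapsto B(x)$ identifies $B(F)$ with that centralizer, equipping it with a canonical conjugation $N \mapsto N^*$. Building an auxiliary LDB structure on $F$ via an isomorphism $\Theta : \Mat_4(\K) \to \calL(F)$ and the laws $x \star_2 y := \Theta(B(x))[y]$, $x \bullet_3 y := \Theta(B(x)^*)[y]$ then shows that $q_F$ is similar to the norm form of $B(F) \cong \calQ \cong C[a,b]$, which is precisely $[1,ab] \bot [a,b] \simeq q_{V_1}$. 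This yields the required scalar $c$ with $q_F \simeq c\,([1,ab] \bot [a,b])$, finishing the proof.

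The main obstacle is the characteristic $2$ Clifford algebra analysis: verifying that $\Delta(q_F) = 0$ implies that the center of $C_0(q_F)$ is $\K \times \K$ with a non-trivial idempotent, and that conjugation by any anisotropic vector of $F$ swaps the two idempotents. Once these facts are confirmed, the remainder is a careful transcription of the char $\neq 2$ argument, mindful that the relevant $2$-fold Pfister form is the characteristic $2$ form $[1,ab] \bot [a,b]$ rather than $\langle 1,-a\rangle \otimes \langle 1,-b\rangle$.
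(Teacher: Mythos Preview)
Your proof is correct and follows essentially the same route as the paper's: both take the orthogonal complement $F=V_1^\perp$, use the quaternion algebra $\calQ=C(q_P)$ with $P\subset V_1\cap e^\perp$ to make $A$ a $\calQ$-vector space on which the operators $x\star-$ for $x\in F$ are $\calQ$-linear, exploit $\Delta(q_F)=0$ to split $A$ via a central idempotent of $C_0(q_F)$, and then run the block-matrix argument of Claim~\ref{dim8lastclaimcharnot2} to identify $q_F$ (up to a scalar) with the norm of $\calQ\simeq C[a,b]$. The only cosmetic difference is that the paper first re-chooses the symplectic basis so that $V_1=\Vect(e_5,e_6,e_7,e)$, whereas you decompose $V_1$ intrinsically; either way the substance is identical.
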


\begin{proof}
We fix a subspace $V_1$ given by Claim \ref{subpfisterclaimchar2}. Then, changing the basis if necessary, we can assume that
$V_1=\Vect(e_5,e_6,e_7,e)$ and that $q_{\Vect(e_5,e_6)} \simeq [a,b]$.
Setting $V_2:=\Vect(e_1,e_2,e_3,e_4)=V_1^\bot$, our aim is to prove that
$q_{V_2}$ is similar to $[1,ab] \bot [a,b]$. The line of reasoning is very similar to the one of the proof of Claim \ref{dim8lastclaimcharnot2}.
Firstly, since the Arf invariant of $q$ is $0$ and the one of $q_{V_1}$ is $0$,
the Arf invariant of $q_{V_2}$ is $0$.
Now, with $P:=\Vect(e_5,e_6)$ and $\calQ:=C(q_P)$, we consider again $A$ with its structure of left $\calQ$-vector space induced by $\Phi$.
For all $x \in V_2$, the map $x \star -$ is an endomorphism of the $\calQ$-vector space $A$, whence $\Phi$ induces an injective homomorphism of
$\K$-algebras from $C(q_{V_2})$ to $\calL_Q(A)$.
As the Arf invariant of $q_{V_2}$ equals zero, the center of $C_0(q_{V_2})$ is isomorphic to $\K \times \K$ whence
it contains a non-trivial idempotent $p$. Let $x \in V_2 \setminus \{0\}$. The conjugation $y \mapsto x y x^{-1}$ induces the non-identity automorphism
of the center of $C_0(q_{V_2})$, which maps $p$ to $1-p$. Setting $g:=\Phi(p)$, it follows that
$(x \star -) \circ g=(\id-g) \circ (x \star -)$, whence $x \star -$ swaps $A_1:=\Ker g$ and $A_2:=\im g$.
From there, one uses the same line of reasoning as in the proof of Claim \ref{dim8lastclaimcharnot2} to obtain that
$q_{V_2}$ is the quadratic form attached to an LDB division algebra that is weakly equivalent to the quaternionic LDB division algebra $\calQ$,
whence $q_{V_2}$ is similar to the norm of $\calQ$. This yields a scalar $c$ such that
$q_{V_2} \simeq c\,\bigl([1,ab] \bot [a,b]\bigr)$. Finally, as $A=V_1 \overset{\bot}{\oplus} V_2$, we conclude that
$q \simeq q_{V_1} \bot q_{V_2} \simeq \langle 1,c\rangle \otimes \bigl([1,ab] \bot [a,b]\bigr)$.
\end{proof}

With the above data, one concludes that $(A,\star,\bullet)$ is equivalent to the octonionic LDB division algebra
associated with the quaternion algebra $C[a,b]$ and with the scalar $c$. This completes the proof of the last point of Theorem \ref{nondegeneratetheo}
for fields of characteristic $2$. Thus, Theorems \ref{nondegeneratetheo}, \ref{degeneratetheo} and \ref{classbyquadformtheo} are now fully established.

\end{document}